\newtheorem{theorem}{Theorem}[section]
\newtheorem{lemma}[theorem]{Lemma}
\newtheorem{proposition}[theorem]{Proposition}
\newtheorem{definition}[theorem]{Definition}
\newtheorem{corollary}[theorem]{Corollary}
\theoremstyle{remark}
\newtheorem{remark}[theorem]{Remark}
\numberwithin{equation}{section}
\newcommand{\R}{\mathbb{R}}
\newcommand{\C}{\mathbb{C}}
\newcommand{\N}{\mathbb{N}}
\newcommand{\E}{\mathbb{E}}
\DeclareMathOperator{\erf}{erf}
\newcommand{\ET}[1]{\mathbb{E}_{\Theta}\left[#1\right]}
\newcommand{\PT}[1]{\mathbb{P}_{\Theta}\left[#1\right]}
\newcommand{\Pn}[1]{\mathbb{P}_{\Theta}^n\left[#1\right]}
\newcommand{\la}{\lambda}
\newcommand{\eps}{\epsilon}
\newcommand{\nth}[1]{[t^n]\left[ #1 \right]}
\newcommand{\set}[1]{\left\{#1\right\}}
\DeclareMathOperator{\one}{\mathds{1}}
\renewcommand{\Re}{\mathrm{Re}}
\newcommand{\Sn}{\mathfrak{S}_n}
\DeclareMathOperator{\Li}{Li}
\DeclareMathOperator{\lcm}{lcm}
\begin{document}

\title[The order of large random permutations with cycle weights]{The order of large random permutations with cycle weights.}
\date{\today}

\author[J. Storm]{Julia Storm}
\address{Institut f\"ur Mathematik\\ Universit\"at Z\"urich\\ Winterthurerstrasse 190\\ 8057-Z\"urich,
Switzerland} \email{julia.storm@math.uzh.ch}

\author[D. Zeindler]{Dirk Zeindler}
\address{Lancaster University\\ Mathematics and Statistics \\ Fylde College \\Bailrigg\\Lancaster\\United Kingdom\\LA1 4YF} 
\email{d.zeindler@lancaster.ac.uk}

\begin{abstract}
The order $O_n(\sigma)$ of a permutation $\sigma$ of $n$ objects is the smallest integer $k \geq 1$ such that the $k$-th iterate of $\sigma$ gives the identity. 
A remarkable result about the order of a uniformly chosen permutation is due to  Erd\"os and Tur\'an who proved in 1965 that $\log O_n$ satisfies a central limit theorem. 
We extend this result to the so-called \textit{generalized Ewens measure} in a previous paper. 
In this paper, we establish a local limit theorem as well as, under some extra moment condition, a precise large deviations estimate. 
These properties are new even for the uniform measure. Furthermore, we provide precise large deviations estimates for random permutations with polynomial cycle weights.
\end{abstract}

\maketitle

\tableofcontents

\section{Introduction} \label{section:introduction}

Denote by $\Sn$ the symmetric group, that is the group of permutations on $n$ objects. For a permutation $\sigma \in \Sn$  the order $O_n = O_n(\sigma)$ is defined as the smallest integer $k$ such that the $k$-th iterate of $\sigma$ is the identity. Landau \cite{La09} proved in 1909 that the maximum of the order of all $\sigma \in \Sn$ satisfies, for $n \rightarrow \infty$, the asymptotic
\begin{align*}
 \max_{\sigma \in \Sn}(\log O_n) \sim \sqrt{n \log (n)}.
\end{align*}
On the other hand, $O_n(\sigma)$ can be computed as the least common multiple of the cycle length of $\sigma$. Thus, if $\sigma$ is a permutation that consists of only one cycle of length $n$, then $O_n(\sigma) = \log(n)$ and $(n-1)!$ of all $n!$ permutations share this property. Considering these two extremal types of behavior, the famous result of Erd\"os and Tur\'an \cite{ErTu65} seems even more remarkable: they showed in in 1965 that a uniformly chosen permutation satisfies, as $n \rightarrow \infty$, the central limit theorem
\begin{align}\label{thm:intro_clt_order}
 \frac{\log O_n - \frac{1}{2}\log^2 (n)}{\sqrt{\frac{1}{3}\log^3 (n)}} \overset{d}{\longrightarrow} \mathcal{N}(0,1).
\end{align}
This result was extended to the Ewens measure and to A-permutations, see for instance \cite{AT92b} and \cite{Ya10a}. 

In this paper we study the random variable $\log O_n$ with respect to a weighted measure. We present large deviations estimates and a local limit theorem for $\log O_n$ which are, to our knowledge, new even for the uniform measure. We also give precise expressions for the expected value of $\log O_n$, which extends results from Zacharovas \cite{Za04a}.

The literature on non-uniform permutations has grown quickly in recent years, particularly due to its relevance in mathematical biology and theoretical physics. In this paper, we focus on random permutations with cycle weights as introduced in the recent works of Betz et. al \cite{BeUeVe11} and Ercolani and Ueltschi \cite{ErUe11}. In their model, each cycle of length $m$ is assigned and individual weight $\theta_m \geq 0$. We denote by $C_m=C_m(\sigma)$ the number of cycles of length $m$ in the decomposition of the permutation $\sigma$ as a product of disjoint cycles. The functions $C_1$, $C_2$,\,\dots are random variables on $\Sn$ and we will call them cycle counts.  Then the weighted measure is defined as follows: 

\begin{definition}
\label{def:measure}
Let  $\Theta = \left(\theta_m  \right)_{m\geq1}$ be given, with $\theta_m\geq0$ for every $m\geq 1$. We then define for $\sigma\in \Sn$
\begin{align*}
  \Pn{\sigma}
   \coloneqq
   \frac{1}{h_n n!} \prod_{m=1}^{n} \theta_{m}^{C_m} 
\end{align*}
with $h_n = h_n(\Theta)$ a normalization constant and $h_0 \coloneqq 1$.
If $n$ is clear from the context, we will just write $\mathbb{P}_\Theta$ instead of $\mathbb{P}_\Theta^n$ .
\end{definition}

Notice that special cases of this measure are the uniform measure ($\theta_m =1$) and the Ewens measure ($\theta_m = \theta$). 
Many properties of permutations considered with respect to this weighted measure have been examined for different classes of parameters, 
see for instance \cite{BeUeVe11, ErUe11, HuNaNiZe11,Ma12a,Ma11,MaNiZe11,NiStZe13,NiZe11}.  
Recently, we studied the order of weighted permutations for polynomial parameters $\theta_m = m^{\gamma}$, $\gamma >0$, see \cite{StZe14a}.
We proved that the cycle counts of the cycles of length smaller than a typical cycle in this model can be decoupled into independent Poisson random variables. 
Using this approximation, we extended the Erd\"os-Tur\'an law \eqref{thm:intro_clt_order} to this setting as well as a functional version of it.

In this paper, several properties of $\log O_n$ are considered for two classes of parameters $\Theta = \left(\theta_m  \right)_{m\geq1}$.
 Section~\ref{section:classF} is devoted to generalized Ewens parameters (see Definition~\ref{def_function_class_F_alpha_r} for precise assumptions) and in  Section~\ref{section:algrowth} polynomial parameters $\theta_m = m^\gamma$ with $\gamma >0$ are studied.
 See the respective preliminary sections \ref{subsection:classF_preliminaries} and \ref{subsection:algrowth_preliminaries} for a short overview of the available result for these parameters. 

The challenging point when studying this measure is that due to a lack of compatibility between the different dimensions the Feller coupling is not available for the measure $\mathbb{P}_{\Theta}$. Therefore, new approaches are needed. The crucial feature of $\mathbb{P}_{\Theta}$ is that it is invariant on conjugacy classes. Using generating series and complex analysis methods, a variety of natural properties of weighted random permutations were recently obtained by several authors. The starting point of the study is the relation
\begin{align}\label{eq:def_g_theta}
\sum_{n=0}^\infty h_n t^n = \exp(g_\Theta(t))
\quad \text{ with } \quad 
g_\Theta(t) \coloneqq \sum_{m=1}^\infty \frac{\theta_m}{m}t^m,
\end{align}
where $h_n$ is defined in Definition~\ref{def:measure} and \eqref{eq:def_g_theta} is considered as formal power series in $t$.
Depending on the structure of the $\theta_m$, different methods are required to investigate the asymptotic behavior of $h_n$ and other quantities of interest. It will turn out that for the generalized Ewens parameters the singularity analysis is the right method to choose (see Section~\ref{subsection:classF_preliminaries}) while for polynomial  parameters it is saddle point analysis (see Section~\ref{subsection:algrowth_preliminaries}).

\vskip 40pt 

\section{Generalities}
\label{sec_comb_andgen_of_Sn}


We require in this paper some basic facts about the symmetric group $\Sn$, partitions, and generating functions. 
Since we need precisely the same definitions, notations and tools as in our paper \cite{StZe14a}, we refer the reader to Section 2.1 and Section 2.2 in \cite{StZe14a} (and the references therein).
%
Here, we introduce an important approximation $\log Y_n$ of the random variable $\log O_n$ and we discuss some number theoretic sums which we will encounter frequently throughout the paper.

\vskip 15pt 

\subsection{The approximation random variable $\log Y_n$}
\label{subsec_logYn}

Recall that the order $O_n(\sigma)$ of a permutation $\sigma \in \Sn$ is the smallest integer $k$ such that the $k$-th iterate of $\sigma$ gives the identity. Assume that $\sigma$ decomposes into disjoint cycles $\sigma = \sigma_1\cdots \sigma_\ell$ and denote by $\la_i$ the length of cycle $\sigma_i$. Then $O_n(\sigma)$ can be computed as the least common multiple of the cycle length:
$$O_n(\sigma) = \lcm(\lambda_1, \la_2, \cdots \la_{\ell}).$$
A common approach to investigate the asymptotic behavior of $\log O_n$ is to introduce the random variable
\begin{align}
\label{eq:def_Yn}
 Y_n \coloneqq \prod_{m=1}^{n}m^{C_m}, 
 \quad \text{ that is } \quad
 \log Y_n = \sum_{m=1}^n \log(m) C_m,
\end{align}
where the $C_m$ denote the cycle counts. The basic strategy is to establish results for $\log Y_n$ and then to show that $\log O_n$ and $\log Y_n$ are relatively close in a certain sense. 
To give explicit expressions for $O_n$ and $Y_n$ involving the $C_m$ let us
 introduce
\begin{align}\label{eq:def:classF_Dnk}
D_{nk} \coloneqq \sum_{m=1}^n C_m \one_{\{k | m\}}
\quad \text{ and } \quad
D_{nk}^* \coloneqq \min \{1, D_{nk} \}.
\end{align}
Now let $p_1,p_2,\dots$ be the prime numbers
and $q_{m,i}$ be the multiplicity of a prime number $p_i$ in the number $m$. Then
\begin{align}
\label{eq:Yn_with_Cm}
 Y_n 
&= \prod_{m=1}^{n}m^{C_m} 
= \prod_{m=1}^{n}(p_1^{q_{m,1}} p_2^{q_{m,2}} \cdots p_n^{q_{m,n}})^{C_m}\nonumber\\
&= \prod_{i=1}^{n}p_i^{\,C_1\cdot q_{1,i} + C_2\cdot q_{2,i} + \cdots+ C_n \cdot q_{n,i}}
= 
  \prod_{p \leq n}p^{\,\sum_{j=1}^n{D_{np^j}}} ,
\end{align}
where $\prod_{p \leq n}$ denotes the product over all prime numbers that are less or equal $n$. The last equality can be understood as follows: First, notice that $D_{nk} =0$ for $k>n$. Next, let $p$ be fixed and define $m=p^{\,q_{m,i}} \cdot a$ where $a$ and $p$ are coprime (meaning that their least common divisor is $1$). Then $C_m$ appears exactly once in the sum $D_{np^j}$ if $j\leq q_{m,i}$ but it does not appear if $j> q_{m,i}$. Thus, $C_m$ appears $q_{m,i}$ times in the sum $\sum_{j=1}^n{D_{np^j}}$.

Analogously, we have
\begin{align}
\label{eq:classF_On_with_prims}
O_n = \prod_{p \leq n}p^{\,\sum_{j=1}^n{D^*_{np^j}}} .
\end{align}

To simplify the logarithm of the expressions \eqref{eq:Yn_with_Cm} and \eqref{eq:classF_On_with_prims},  we introduce the von Mangoldt function $\Lambda$, which is defined as
\begin{align}\label{eq:intro_Mangoldt}
\Lambda(n) = 
\begin{cases} 
\log(p) &\mbox{if } n = p^k \text{ for some prime } p \text{ and } k\geq 1,  \\ 
0 & \mbox{otherwise.} 
\end{cases} 
\end{align}
Consequently,
\begin{align}
\label{eq:log_On_von_mangold}
\log Y_n= \sum_{k \leq n} \Lambda(k) D_{nk} 
\quad \text{ and } \quad
\log O_n= \sum_{k \leq n} \Lambda(k) D^*_{nk}.
\end{align}

Now define 
\begin{align}\label{eq:intro_Delta_n}
\Delta_n 
\coloneqq \log Y_n - \log O_n 
 = \sum_{k \leq n}  \Lambda(k) \big(D_{nk} - D^*_{nk}\big) .
  \end{align}
In order to prove properties of $\log O_n$ they are first established for $\log Y_n$ and then one needs to show that $\Delta_n$ is approximately small enough to transfer the result to $\log O_n$, see for example Lemma~\ref{lem:classF_closeness1 log Y_n and log O_n} and Lemma~\ref{lem:algrowth_closeness}.

An important tool to study $\log Y_n$ is its moment generating function. By using a randomized version of the measure $\mathbb{P}_{\Theta}$, one can show 
\begin{align}\label{eq:generating_series_logYn_allgemein}
 \sum_{n=0}^{\infty} h_n \E_{\Theta}[\exp(s \log Y_n)] t^n  
= \exp\left( \sum_{m=1}^{\infty} \frac{\theta_m}{m^{1 - s}} t^m  \right),
\end{align}

see Lemma 2.7 and equation (2.6) in \cite{StZe14a}.



\vskip 15pt 
\subsection{Number theoretic sums}
We recall the asymptotic behavior of some averages over multiplicative functions involving the von Mangoldt function $\Lambda$, which will be particularly useful to study the difference of $\log O_n$ and $\log Y_n$, see \eqref{eq:intro_Delta_n}. 
Let us begin with the Chebyshev function $\psi$, which is defined as
\begin{align}\label{eq:intro_Chebychev}
 \psi(x) \coloneqq \sum_{k \leq x} \Lambda(k) = \sum_{p^k \leq x} \log(p).
\end{align}
By definition, the prime number theorem is equivalent to
\begin{align}\label{eq:Chebychev_asymp}
\psi(x) = x \big(1+o(1) \big) \quad \quad \text{ as } x \rightarrow \infty.
\end{align}
A more precise explicit formula which was proved by Mangoldt is given by 
\begin{align}\label{eq:Chebychev_asymp-exact}
 \psi(x) = x - \sum_{\rho} \frac{x^{\rho}}{\rho} - \log(2\pi) - \frac{1}{2}\log(1-x^{-2}),
\end{align}
where the sum is taken over the zeros of the Riemann zeta function (see \cite[Section II.4.3]{Te95}). Then  the Riemann hypothesis is equivalent to 
\begin{align}\label{eq:Chebychev_asymp_RH}
 \psi(x) = x + O(x^{1/2 + \epsilon}) \quad \text { for all } \epsilon >0,
\end{align}
see \cite[Section II.4, Corollary 3.1]{Te95}. The relation of $\psi(n)$ and the least common multiple of the numbers $1,2,...,n$ is given by
\begin{align*}
\lcm(1,2,...,n) = \exp(\psi(n)).
\end{align*}

Furthermore, by \cite[Theorem 4.9]{Ap84},
\begin{align}\label{eq:intro_Mangoldt_sum1}
\sum_{k\leq x} \frac{\Lambda (k)}{k} = \log(x) + O(1) \ \text{ as }x\to\infty
\end{align}
holds, and by \eqref{eq:Chebychev_asymp} this can be generalized for $0\neq \alpha\neq 1$ to
\begin{align}\label{eq:intro_Mangoldt_sum3}
\sum_{k=x}^y \Lambda(k) k^{-\alpha} 
&=
\sum_{k = x}^y \Lambda(k) \int_k^y \alpha t^{-\alpha-1} dt + y^{-\alpha} \sum_{k = x}^y \Lambda(k) \nonumber\\
&= \alpha \int_{x}^y \sum_{k=x}^t \Lambda(k) t^{-\alpha-1} dt + y^{-\alpha}(y - x +o(y)) \nonumber\\
&= \frac{1+\alpha}{1-\alpha} \big(y^{1-\alpha} - x^{1-\alpha}\big)(1+o(1)).
\end{align}
%

%
%
%
Finally, recall also the Euler-Maclaurin formula
\begin{align}\label{eq:intro_euler_maclaurin}
\sum_{m=1}^b f(m)
= \int_0^b f(x) dx + \int_0^b (x-\lfloor x \rfloor)f'(x)dx + f(b)(b-\lfloor b \rfloor)  .
\end{align}

\vskip 40pt 

\section{The generalized Ewens measure}
\label{section:classF}

The first class of parameters $\Theta = (\theta_m)_{m \geq 1}$ of interest are the so-called generalized Ewens parameters. Roughly speaking, this class comprises all types of parameters such that the generating series $g_{\Theta}$ as defined in \eqref{eq:def_g_theta} exhibits logarithmic singularities. To make this notion precise, we consider $\Theta = (\theta_m)_{m \geq 1}$ such that $g_{\Theta}$ belongs to the set $\mathcal{F}(r,\vartheta,K)$, see Definition~\ref{def_function_class_F_alpha_r} below.  This class of parameters was recently studied by several authors. The case $\theta_m \rightarrow \vartheta$ (which corresponds to $\mathcal{F}(1,\vartheta,K)$) was studied for example in \cite{BeUeVe11}, where results on the length of a typical cycle and the expected value of the total number of cycles are obtained. In \cite{NiZe11} a central limit theorem and Poisson approximation estimates for the total number of cycle are proved for the general case  $\mathcal{F}(r,\vartheta,K)$. These results where complemented in \cite{NiStZe13}, where the behavior of large cycles was studied and a functional central limit theorem for the cycle counts was obtained.

In all these works it turns out that the behavior of weighted random permutations with parameters corresponding to $\mathcal{F}(r,\vartheta,K)$ 
(almost) coincides with that of permutations considered with respect to the Ewens measure with parameter $\vartheta$. 
It it thus natural to expect that the Erd\"os-Tur\'an law as stated in \eqref{thm:intro_clt_order} should also be valid for parameters of the class $\mathcal{F}(r,\vartheta,K)$. 
This is indeed true, as we will show in Theorem~\ref{thm:classF_logOn_central_limit}. 
Furthermore, we will present results about the order of weighted random permutations that are even new for the Ewens measure, 
such as a local limit theorem (see Section~\ref{subsection:classF_mod_conv_local_limit}) and large deviations estimates (see Section~\ref{subsection:classF_large_dev}).

\vskip 15pt 
\subsection{Preliminaries}\label{subsection:classF_preliminaries}
To determine the framework of this section the following preliminary definition is needed.

\begin{definition}
\label{def_delta_0}
Let $0< r < R$ and $0 < \phi <\frac{\pi}{2}$ be given. We then define
\begin{align}
\Delta_0 = \Delta_0(r,R,\phi) = \set{ z\in \C ; |z|<R, z \neq r ,|\arg(z-r)|>\phi}
\label{eq_def_delta_0}.
\end{align}

\begin{figure}[ht!]
\centering
 \includegraphics[height=.18\textheight]{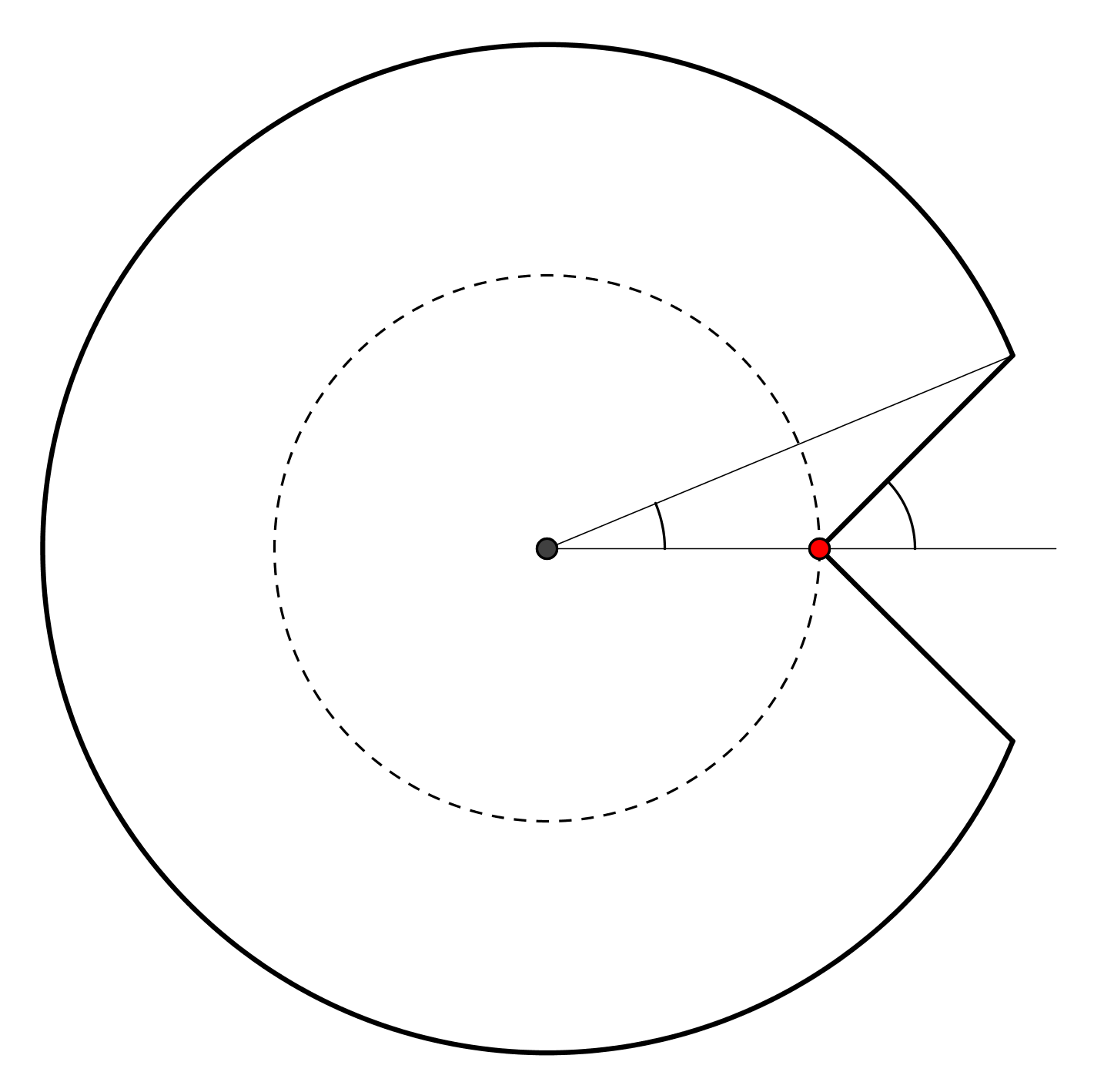}
\put(-55,43){\mbox{\scriptsize$0$}}
\put(-33,46){\mbox{\scriptsize$r$}}
\put(-15,55){\mbox{\scriptsize$\phi$}}
\put(-100,100){\mbox{\scriptsize$|z|=R$}}
\caption{Illustration of $\Delta_0$}
\label{fig_delta_0}
\end{figure}

\end{definition}
Let us now introduce the \textit{generalized Ewens measure}. Rather than defining conditions for the parameters $\Theta = (\theta_m)_{m \geq 1}$ directly, we impose them on the generating series $g_{\Theta}$. We require that $g_{\Theta}$ is analytic in a $\Delta_0$-domain and that it admits logarithmic growth at its dominant singularity. 
\begin{definition}
 \label{def_function_class_F_alpha_r}
Let $r,\vartheta>0$ and $K\in\R$ be given. We write $\mathcal{F}(r,\vartheta,K)$ for the set of all functions $g$ satisfying
\begin{enumerate}
 \item[\textup{(1)}]  $g$ is holomorphic in $\Delta_0(r,R,\phi)$ for some $R>r$ and $0 < \phi <\frac{\pi}{2}$,
 \item[\textup{(2)}]  
   \begin{align}
   g(t) = \vartheta \log\left( \frac{1}{1-t/r} \right) + K + O \left( t-r \right)   \text{ as } t\to r.
   \label{eq_class_F_r_alpha_near_r}
   \end{align}
\end{enumerate}
\end{definition}
Notice that $\theta_m = \vartheta$ leads to $g_{\Theta}(t) = - \vartheta \log(1-t) \in \mathcal{F}(1, \vartheta,0)$ and thus the Ewens measure is covered by the family $\mathcal{F}(r, \vartheta,K)$. More generally, functions of the form 
$g_{\Theta}(t) = - \vartheta \log(1-t) + f(t)$ with $f$ holomorphic for $|t| < 1 + \epsilon$ are contained in $\mathcal{F}(1, \vartheta,f(1))$. In particular, the case $\theta_m \neq \vartheta$ for only finitely many k is included in $\mathcal{F}(1, \vartheta, . )$.

\begin{remark}
The justification for the name \textit{generalized Ewens measure} relies on the following observation.
Theorem VI.3 and VI.4 in \cite{FlSe09} implies that if $g_{\Theta}(t)$ is defined as in \eqref{eq:def_g_theta} and the parameters $\theta_m$ are such that 
$g_{\Theta}$ belongs to $\mathcal{F}(r,\vartheta,K)$, then there exists some $\eps_m$ such that 
 \begin{align}
 \label{eq:classF_theta_k_to_vartheta}
\theta_m r^m = \vartheta +\eps_m \quad \text{ with }\ |\eps_m|=O(1) \ \text{ and }\ \sum_{m=1}^\infty \frac{|\eps_m|}{m} < \infty.
\end{align}
%
\end{remark}
Notice that there are examples in $\mathcal{F}(r,\vartheta,K)$ with $|\eps_m| \not\to 0$. We will occasionally assume that  $|\eps_m| \to 0$ to get nicer results.

With these assumptions on the generating series at hand, we can compute the asymptotic behavior of $h_n$.
\begin{corollary}[\cite{NiStZe13}, Corollary 3.4]
\label{cor:haviour_of_hn}
Let $g_\Theta(t)$ in $\mathcal{F}(r,\vartheta, K)$ be given, then
\begin{align*}
 h_n 
=
\frac{n^{\vartheta -1} e^{K } }{r^n \Gamma(\vartheta)} \left(1 +  O\left(\frac{1}{n} \right) \right).
\end{align*}
\end{corollary}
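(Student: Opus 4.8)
The plan is to read off the asymptotics of $h_n$ from the generating‑function identity $H(t) \coloneqq \sum_{n\geq 0} h_n t^n = \exp\big(g_\Theta(t)\big)$ by singularity analysis, using the transfer theorems of Flajolet and Sedgewick (\cite{FlSe09}, Theorems VI.3 and VI.4). First I would observe that, since $g_\Theta \in \mathcal{F}(r,\vartheta,K)$, it is holomorphic in some $\Delta_0(r,R,\phi)$ with $R>r$, and hence so is $H=\exp\circ\, g_\Theta$ because $\exp$ is entire; moreover $0 \in \Delta_0(r,R,\phi)$, so $H$ is analytic in a bona fide $\Delta$‑domain anchored at $r$. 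The point $r$ is the unique singularity of $H$ on the circle $|t|=r$, since $g_\Theta(t)\to+\infty$ as $t\to r$ by \eqref{eq_class_F_r_alpha_near_r}.

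Next I would exponentiate the singular expansion \eqref{eq_class_F_r_alpha_near_r}: as $t\to r$ inside $\Delta_0$,
\[
H(t) = e^{K}\left(1-\frac{t}{r}\right)^{-\vartheta}\exp\!\big(O(t-r)\big) = e^{K}\left(1-\frac{t}{r}\right)^{-\vartheta}\big(1+O(t-r)\big),
\]
and, writing $t-r = -r\left(1-\tfrac{t}{r}\right)$, this turns into
\[
H(t) = e^{K}\left(1-\frac{t}{r}\right)^{-\vartheta} + O\!\left(\left(1-\frac{t}{r}\right)^{1-\vartheta}\right),
\]
the error understood to hold uniformly in the relevant $\Delta$‑domain.

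Then I would transfer to coefficients term by term. The main term gives, by the binomial expansion and Stirling,
\[
[t^n]\left(1-\frac{t}{r}\right)^{-\vartheta} = r^{-n}\binom{n+\vartheta-1}{n} = \frac{n^{\vartheta-1}}{r^{n}\,\Gamma(\vartheta)}\left(1+O\!\left(\frac{1}{n}\right)\right),
\]
while the big‑$O$ transfer theorem applied to the remainder yields $[t^n]\,O\big((1-t/r)^{1-\vartheta}\big) = O\big(r^{-n}n^{\vartheta-2}\big)$, which is absorbed into the $O(1/n)$ relative error. Adding the two contributions gives $h_n = \dfrac{n^{\vartheta-1}e^{K}}{r^{n}\Gamma(\vartheta)}\left(1+O\!\left(\frac1n\right)\right)$, as claimed. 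The harmless degenerate cases ($\vartheta$ a positive integer, where $(1-t/r)^{1-\vartheta}$ is a polynomial) contribute nothing or only a lower‑order term and do not affect the conclusion.

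The main obstacle is the uniformity required in the second step: singularity analysis needs the remainder estimate to hold throughout a full $\Delta$‑domain around $r$, whereas \eqref{eq_class_F_r_alpha_near_r} is literally a statement about the limit $t\to r$. One must therefore argue that the remainder $g_\Theta(t) - \vartheta\log\big(1/(1-t/r)\big) - K$ is genuinely $O(t-r)$ on a neighbourhood of $r$ intersected with $\Delta_0$ (not merely along a ray), and that composing with $\exp$ — which is Lipschitz on bounded sets — preserves this bound. Once that is in place, everything else is the routine bookkeeping of the transfer method.
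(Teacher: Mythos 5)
Your proof is correct and is essentially the standard argument: the paper does not reprove this statement but cites \cite{NiStZe13}, where it is obtained exactly by exponentiating the singular expansion of $g_\Theta$ and applying the Flajolet--Sedgewick transfer theorems in a $\Delta$-domain, as you do. Your closing remark about the uniformity of the $O(t-r)$ error throughout the $\Delta_0$-domain correctly identifies the one point that must be read into Definition~\ref{def_function_class_F_alpha_r} for the transfer to be legitimate.
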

The starting point of our study of the properties of $\log O_n$ is the closeness of $\log O_n$ and $\log Y_n$. Recall $\Delta_n$ defined in \eqref{eq:intro_Delta_n}. 
\begin{lemma}\label{lem:classF_closeness1 log Y_n and log O_n}
Let $(\theta_m)_{m \geq 1}$ be such that $g_{\Theta} \in \mathcal{F}(r,\vartheta,K)$. Then, as $n \to \infty$, the following asymptotic holds for every constant $\kappa$: 
\begin{align*}
 \PT{\Delta_n \geq \log (n) (\log \log (n))^{\kappa}} 
 =O\big((\log \log (n))^{1-\kappa}\big).
\end{align*}
\end{lemma}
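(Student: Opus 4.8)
The plan is to deduce the estimate from a first-moment bound on $\Delta_n$. Since $D_{nk}^{*}=\min\{1,D_{nk}\}$ and $D_{nk}$ is a non-negative integer, we have $D_{nk}-D_{nk}^{*}=\max\{D_{nk}-1,0\}=:(D_{nk}-1)^{+}$, so \eqref{eq:intro_Delta_n} reads
\begin{align*}
\Delta_n=\sum_{k\le n}\Lambda(k)\,(D_{nk}-1)^{+}\ \ge\ 0 .
\end{align*}
Because $\Delta_n\ge 0$, Markov's inequality gives $\PT{\Delta_n\ge\log(n)(\log\log n)^{\kappa}}\le\ET{\Delta_n}\big/\big(\log(n)(\log\log n)^{\kappa}\big)$, so the lemma follows once we prove that $\ET{\Delta_n}=O\big(\log(n)\log\log n\big)$; note this bound does not involve $\kappa$, so it settles every constant $\kappa$ at once.

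To bound $\ET{\Delta_n}$ I would use the two elementary inequalities, valid for every integer $D\ge 0$,
\begin{align*}
(D-1)^{+}\le D\qquad\text{and}\qquad (D-1)^{+}\le\binom{D}{2},
\end{align*}
the second of which is checked directly (both sides vanish for $D\in\{0,1\}$, are equal for $D=2$, and $\binom{D}{2}/(D-1)=D/2\ge1$ for $D\ge2$). Fixing the threshold $k_0=\lfloor\log n\rfloor$, I apply the first inequality in the range $k\le k_0$ and the second in the range $k>k_0$, so that
\begin{align*}
\ET{\Delta_n}\ \le\ \sum_{k\le k_0}\Lambda(k)\,\ET{D_{nk}}\ +\ \sum_{k_0<k\le n}\Lambda(k)\,\ET{\binom{D_{nk}}{2}}.
\end{align*}
Expanding $D_{nk}=\sum_{\ell\le n/k}C_{k\ell}$ and using the standard factorial-moment identities for $\mathbb{P}_{\Theta}$ (see Section~2 of \cite{StZe14a}), namely $\ET{C_m}=\tfrac{\theta_m}{m}\tfrac{h_{n-m}}{h_n}$, $\ET{C_m(C_m-1)}=(\tfrac{\theta_m}{m})^2\tfrac{h_{n-2m}}{h_n}$ and $\ET{C_mC_{m'}}=\tfrac{\theta_m}{m}\tfrac{\theta_{m'}}{m'}\tfrac{h_{n-m-m'}}{h_n}$ for $m\neq m'$, one gets $\ET{D_{nk}(D_{nk}-1)}=\sum_{k\mid m,\,k\mid m',\,m+m'\le n}\tfrac{\theta_m}{m}\tfrac{\theta_{m'}}{m'}\tfrac{h_{n-m-m'}}{h_n}$.

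Plugging in $\theta_m r^{m}=O(1)$ from \eqref{eq:classF_theta_k_to_vartheta} and $h_{n-j}/h_n=O(r^{j})$ from Corollary~\ref{cor:haviour_of_hn} then yields $\ET{D_{nk}}=O\big(\tfrac1k\sum_{\ell\le n/k}\tfrac1\ell\big)=O\big(\tfrac{\log n}{k}\big)$ and $\ET{\binom{D_{nk}}{2}}=O\big(\tfrac1{k^{2}}\big(\sum_{\ell\le n/k}\tfrac1\ell\big)^{2}\big)=O\big(\tfrac{\log^{2}n}{k^{2}}\big)$. Consequently, by \eqref{eq:intro_Mangoldt_sum1},
\begin{align*}
\sum_{k\le k_0}\Lambda(k)\,\ET{D_{nk}}=O\Big(\log n\sum_{k\le k_0}\tfrac{\Lambda(k)}{k}\Big)=O(\log n\,\log\log n),
\end{align*}
while, using $\sum_{k>x}\Lambda(k)k^{-2}=O(1/x)$ (which follows from the prime number theorem \eqref{eq:Chebychev_asymp} by partial summation),
\begin{align*}
\sum_{k_0<k\le n}\Lambda(k)\,\ET{\binom{D_{nk}}{2}}=O\Big(\log^{2}n\sum_{k>k_0}\tfrac{\Lambda(k)}{k^{2}}\Big)=O\!\left(\tfrac{\log^{2}n}{k_0}\right)=O(\log n).
\end{align*}
Adding the two pieces gives $\ET{\Delta_n}=O(\log n\log\log n)$, as required.

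The one point that I expect to need genuine care is the uniformity of the estimate $h_{n-j}/h_n=O(r^{j})$: when $\vartheta<1$, Corollary~\ref{cor:haviour_of_hn} shows that $h_{j}/h_n$ exceeds $r^{\,n-j}$ by a polynomial factor $\asymp (j/n)^{\vartheta-1}$ as $j\to0$ (that is, as $m=n-j\to n$), so the terms with $m$ close to $n$ in $\ET{D_{nk}}$ and in $\ET{D_{nk}(D_{nk}-1)}$ must be summed separately; a direct computation with Corollary~\ref{cor:haviour_of_hn} shows their total contribution is still $O(1/k)$, respectively $O(1/k^{2})$, hence harmless. Apart from this, the argument reduces to the routine manipulations of the number-theoretic sums recalled in Section~\ref{sec_comb_andgen_of_Sn}.
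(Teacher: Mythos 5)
Your proposal is correct and follows essentially the same route as the paper: the paper likewise bounds $D_{nk}-D^*_{nk}$ by $D_{nk}$ for $k\leq\lfloor\log n\rfloor$ and by $D_{nk}(D_{nk}-1)$ beyond, invokes the first and second factorial moments of $D_{nk}$ (its Proposition~\ref{prop:bound_Dnk}, which you re-derive inline, including the same case distinction for $\vartheta<1$), sums against $\Lambda(k)/k$ and $\Lambda(k)/k^2$ to get $\ET{\Delta_n}=O(\log n\,\log\log n)$, and concludes by Markov's inequality. The only cosmetic differences are your use of $\binom{D}{2}$ in place of $D(D,-1)$ and your absorbing the boundary term $m=n$ (which the paper records separately as $O(n^{-\vartheta}\one_{\{k\mid n\}})$) into the same harmless error.
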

The analogue result for the Ewens measure was proved in \cite{DePi85}. In Section~\ref{subsection:classF_E_On} we will present a much more precise expression for $\ET{\Delta_n}$.
For the proof of Lemma~\ref{lem:classF_closeness1 log Y_n and log O_n} the following proposition is required.

\begin{proposition}
\label{prop:bound_Dnk} 
Suppose that $g_{\Theta}$ belongs to $\mathcal{F}(r,\vartheta,K)$. Then  
\begin{enumerate}
\item[\textup{(1)}]  
$\ET{D_{nk}} 
= O\left( \frac{\log(n)}{k} + n^{-\theta}\one_{\{k | n\}} \right),$
\item[\textup{(2)}]  
$ \ET{D_{nk}(D_{nk} - 1)}
= O\left(\frac{\log^2 (n)}{k^2}+ n^{-2\theta}\one_{\{k | n\}}\right). $
\end{enumerate}
Furthermore, the error terms are uniform in $k$ for $1\leq k\leq n$.
\end{proposition}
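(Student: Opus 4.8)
The plan is to compute the relevant moments of $D_{nk}$ via the generating-series machinery already available for weighted permutations. Recall that $D_{nk} = \sum_{m=1}^n C_m \one_{\{k\mid m\}}$ counts (with multiplicity) the cycles whose length is divisible by $k$; equivalently, writing $m = k\ell$, we have $D_{nk} = \sum_{\ell=1}^{\lfloor n/k\rfloor} C_{k\ell}$. The key tool is the analogue of \eqref{eq:generating_series_logYn_allgemein}: for a formal variable $s$,
\begin{align*}
 \sum_{n=0}^\infty h_n \ET{s^{D_{nk}}} t^n
 = \exp\left( \sum_{m=1}^\infty \frac{\theta_m}{m} t^m + (s-1)\sum_{\ell=1}^\infty \frac{\theta_{k\ell}}{k\ell} t^{k\ell} \right)
 = \exp\!\big(g_\Theta(t)\big)\,\exp\!\big((s-1)\, g_{\Theta,k}(t)\big),
\end{align*}
where $g_{\Theta,k}(t) \coloneqq \sum_{\ell\geq1} \frac{\theta_{k\ell}}{k\ell} t^{k\ell}$. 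Differentiating in $s$ at $s=1$ gives $\sum_n h_n \ET{D_{nk}} t^n = e^{g_\Theta(t)} g_{\Theta,k}(t)$ and differentiating twice gives $\sum_n h_n \ET{D_{nk}(D_{nk}-1)} t^n = e^{g_\Theta(t)} g_{\Theta,k}(t)^2$. So the whole problem reduces to extracting the $n$-th coefficient of these two products via singularity analysis.

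First I would establish the analytic properties of $g_{\Theta,k}$ near the dominant singularity $t=r$. Using \eqref{eq:classF_theta_k_to_vartheta}, write $\theta_{k\ell} r^{k\ell} = \vartheta + \eps_{k\ell}$, so that
\begin{align*}
 g_{\Theta,k}(t) = \sum_{\ell=1}^\infty \frac{\vartheta}{k\ell}\Big(\frac{t}{r}\Big)^{k\ell} + \sum_{\ell=1}^\infty \frac{\eps_{k\ell}}{k\ell}\Big(\frac{t}{r}\Big)^{k\ell}
 = \frac{\vartheta}{k}\log\!\Big(\frac{1}{1-(t/r)^k}\Big) + E_k(t),
\end{align*}
where the error series $E_k(t)$ converges for $|t|\leq r$ because $\sum_\ell |\eps_{k\ell}|/(k\ell) \leq \sum_m |\eps_m|/m < \infty$, and in fact $|E_k(t)| = O(1/k)$ uniformly in $k$ on $|t|\leq r$. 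The logarithmic term behaves near $t=r$ like $\frac{\vartheta}{k}\log\frac{1}{1-t/r}$ up to bounded correction, since $1-(t/r)^k = (1-t/r)(1+\dots+(t/r)^{k-1})$ and the second factor is analytic and nonvanishing near $t=r$. Hence $g_{\Theta,k}(t) = \frac{\vartheta}{k}\log\frac{1}{1-t/r} + O(1/k)$ as $t\to r$, with a $\Delta_0$-domain of analyticity inherited from that of $g_\Theta$ (one must check that the zeros of $1-(t/r)^k$ other than $t=r$ lie on $|t|=r$ and are thus excluded from a suitably chosen $\Delta_0$ after shrinking $R$; this is routine since those zeros are $r$ times the $k$-th roots of unity). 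Then $e^{g_\Theta(t)} g_{\Theta,k}(t)$ has the local form $\frac{n^{\vartheta-1}e^K}{r^n\Gamma(\vartheta)}$-type prefactor times a function with a $\log\frac{1}{1-t/r}$ factor of size $O(1/k)$, plus a pole-free remainder; by the standard singularity-analysis transfer theorems (Flajolet--Sedgewick VI.2--VI.3, already cited as \cite{FlSe09}) and Corollary~\ref{cor:haviour_of_hn} for $h_n$, one reads off $\ET{D_{nk}} = O(\log(n)/k)$. Similarly $g_{\Theta,k}(t)^2$ contributes a $\log^2\frac{1}{1-t/r}$ factor of size $O(1/k^2)$, giving $\ET{D_{nk}(D_{nk}-1)} = O(\log^2(n)/k^2)$.

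The remaining subtlety is twofold. First, these estimates degrade when $k$ is large relative to $n$ — indeed for $k>n$ we have $D_{nk}=0$, and for $k\mid n$ with $k$ close to $n$ the single term $C_n$ or $C_{n/2}$ etc. contributes a genuinely different order, which is the source of the $n^{-\vartheta}\one_{\{k\mid n\}}$ correction (note the paper writes $\theta$ for $\vartheta$ in the statement); this boundary term must be extracted by hand, isolating the coefficient of $t^n$ coming from the single monomial $\frac{\theta_n}{n}t^n$ in $g_{\Theta,k}$ when $k\mid n$ (and $\frac{\theta_{n/2}}{n/2}t^{n}$ cross-terms, etc., in the second moment) and estimating $\theta_n = (\vartheta+\eps_n)r^{-n}$ against $h_n r^n \sim n^{\vartheta-1}e^K/\Gamma(\vartheta)$, which produces the $n^{-\vartheta}$ factor. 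Second, and this is the main obstacle, one needs the error bounds in the singularity analysis to be \emph{uniform in $k$} for $1\leq k\leq n$ — the transfer theorem constants depend on the domain $\Delta_0$ and on bounds for the function on that domain, so I would fix one $\Delta_0$-domain working for all $k$ simultaneously (possible since the extra singularities of $g_{\Theta,k}$ at $r\zeta$, $\zeta^k=1$, $\zeta\neq1$, all sit on the circle $|t|=r$ and can be uniformly avoided) and track the $k$-dependence of the sup-norm of $g_{\Theta,k}$, which is $O(1/k)$ by the computation above, through the contour-integral estimate. Once uniformity is secured, combining the bulk singularity-analysis bound with the explicit boundary term gives both claims of the proposition.
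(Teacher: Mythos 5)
Your starting identity is fine and is in fact equivalent to the paper's starting point: extracting $[t^n]$ from $e^{g_\Theta(t)}g_{\Theta,k}(t)$ by convolution gives exactly $h_n\ET{D_{nk}}=\sum_{m\leq n}\frac{\theta_m}{m}\one_{\{k|m\}}h_{n-m}$, and you correctly locate the source of the $n^{-\vartheta}\one_{\{k|n\}}$ boundary term. But the step where you extract the coefficients by singularity analysis has a genuine gap. The transfer theorems of Flajolet--Sedgewick require $\Delta$-analyticity of the function whose coefficients are being read off, and $g_{\Theta,k}$ is \emph{not} $\Delta$-analytic under the hypothesis $g_\Theta\in\mathcal{F}(r,\vartheta,K)$. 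Two separate things go wrong. First, the explicit piece $\frac{\vartheta}{k}\log\big(1-(t/r)^k\big)^{-1}$ has singularities at $r\zeta$ for every $k$-th root of unity $\zeta$; these points lie on $|t|=r$ and therefore sit \emph{inside} every domain $\Delta_0(r,R,\phi)$ of Definition~\ref{def_delta_0} (only the single point $t=r$ is slit out), so they cannot be ``excluded by shrinking $R$'' as you claim. Second, the remainder $E_k(t)=\sum_\ell \frac{\eps_{k\ell}}{k\ell}(t/r)^{k\ell}$ is only known to converge on $|t|\leq r$; a sparse subseries of an analytically continuable series need not continue at all, so no analyticity is ``inherited from $g_\Theta$''. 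This is precisely why the paper, when it does want to run your argument (Lemma~\ref{lem:classF_asymptotic_Dk}), introduces the strictly stronger class $\mathcal{LF}(r,\vartheta)$ of Definition~\ref{def:Delta0K}, assuming $g_{\Theta,k}$ analytic on $\bigcap_m e^{2\pi i m/k}\Delta_0$ with uniform local expansions. Those assumptions are not available in Proposition~\ref{prop:bound_Dnk}. (As a smaller point, your claim $|E_k(t)|=O(1/k)$ uniformly in $k$ does not follow from \eqref{eq:classF_theta_k_to_vartheta} alone; only $\sum_{k\ell\leq n}|\eps_{k\ell}|/(k\ell)=O(\log(n)/k)$ is guaranteed.)

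The fix is to abandon the transfer theorem and exploit positivity: since all coefficients are nonnegative, bound the convolution directly. The paper does exactly this, writing $\ET{D_{nk}}=\sum_m\frac{\theta_m}{m}\one_{\{k|m\}}\frac{h_{n-m}}{h_n}$ and estimating via Corollary~\ref{cor:haviour_of_hn} and $\theta_m r^m=\vartheta+O(1)$; the only care needed is the case $\vartheta<1$, where $h_{n-m}/h_n\asymp r^m(1-m/n)^{\vartheta-1}$ blows up as $m\to n$ and the term $m=n$ produces the $n^{-\vartheta}\one_{\{k|n\}}$ correction. The second moment is handled identically after expanding $D_{nk}(D_{nk}-1)$ into $\sum_{m\neq m'}C_mC_{m'}$ plus $\sum_m C_m(C_m-1)$ and using the analogous exact formulas $\ET{C_mC_{m'}}=\frac{\theta_m\theta_{m'}}{mm'}\frac{h_{n-m-m'}}{h_n}$. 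With that replacement your outline becomes the paper's proof; as written, the bulk estimate does not go through.
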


\begin{proof}[Proof of Lemma \ref{lem:classF_closeness1 log Y_n and log O_n}]
Notice that $\Delta_n$ defined in \eqref{eq:intro_Delta_n} can be estimated as
\begin{align*}
 \Delta_n = \sum_{k=1}^n \Lambda(k) \big(D_{nk} - D^*_{nk}\big) =: \sum_{k=1}^n \Lambda(k) \, \Delta_{nk}
\end{align*}
with
\begin{align*}
 \Delta_{nk} \leq D_{nk} \quad \text{and} \quad \Delta_{nk} \leq D_{nk}(D_{nk} - 1).
\end{align*}
Thus
\begin{align*}
 \ET{\Delta_n}
&= \sum_{k=1}^n \Lambda(k) \, \E_{\Theta}[\Delta_{nk}] \nonumber \\ 
&\leq \sum_{k=1}^{\lfloor \log (n) \rfloor } \Lambda (k) \, \E_{\Theta}[D_{nk}] + \sum_{k= \lceil \log (n) \rceil }^n \Lambda(k) \, \E_{\Theta}[D_{nk}(D_{nk} - 1)]. 
\end{align*}

Then Proposition~\ref{prop:bound_Dnk} together with \eqref{eq:intro_Mangoldt_sum1} and \eqref{eq:intro_Mangoldt_sum3} gives
\begin{align}\label{eq:classF_expectation_Delta_n}
 \ET{\Delta_n}
&= O\bigg( \log (n) \sum_{k=1}^{\lfloor \log(n) \rfloor } \frac{\Lambda (k)}{k} + 
\log^2 (n) \sum_{k= \lceil \log (n) \rceil }^n \frac{\Lambda (k)}{k^2} \bigg) \nonumber\\
&= O(\log (n) \log\log (n)).
\end{align}
%

Now Chebychev's inequality implies for $n \to\infty$
\begin{align*}
 \PT{\Delta_n \geq \log (n) (\log \log (n))^{\kappa}} 
\leq \frac{\E_{\Theta}[\Delta_n]}{\log (n)(\log \log (n))^{\kappa}}
=O\big((\log \log (n))^{1-\kappa}\big)
\end{align*}
and this completes the proof of the lemma. 
\end{proof}

\begin{proof}[Proof of Proposition~\ref{prop:bound_Dnk} ]
We begin with $(1)$. Lemma 2.5 in \cite{StZe14a} and \eqref{eq:def:classF_Dnk} yield
%
\begin{align*}
 \ET{D_{nk}} 
 = 
 \sum_{m=1}^n \ET{C_m} \one_{\{k | m\}} 
 = 
 \sum_{m=1}^n \frac{\theta_m}{m} \one_{\{k | m\}} \frac{h_{n-m}}{h_n}.
\end{align*}
We have to distinguish the cases $\vartheta \geq 1$ and $\vartheta < 1$, see \eqref{eq:classF_theta_k_to_vartheta}.
If $\vartheta \geq 1$, then it follows with with Corollary~\ref{cor:haviour_of_hn} and \eqref{eq:classF_theta_k_to_vartheta} that $ \theta_m h_{n-m}/h_n$ is bounded and thus
\begin{align*}
  \ET{D_{nk}} 
  = 
  O\left( \sum_{m=1}^n \frac{1}{m} \one_{\{k | m\}} \right)
  =
  O\left( \frac{1}{k} \sum_{j=1}^{n/k} \frac{1}{j} \right)
   =
  O\left( \frac{\log(n)}{k}  \right).
\end{align*}
If $\vartheta <1$, we have to be more careful. We get again with \eqref{eq:classF_theta_k_to_vartheta} and Corollary~\ref{cor:haviour_of_hn} 
\begin{align*}
\ET{D_{nk}} 
=
O\left( \sum_{m=1}^{n-1} \frac{1}{m} \one_{\{k | m\}}  \left(1-\frac{m}{n}\right)^{\theta-1} + n^{-\theta}\one_{\{k | n\}}\right),
\end{align*}
%
%
where
\begin{align*}
 \sum_{m=1}^{n-1} \frac{1}{m} \one_{\{k | m\}}  \left(1-\frac{m}{n}\right)^{\theta-1}
&=
O\left( \sum_{m=1}^{n/2} \frac{\one_{\{k | m\}} }{m}   + \frac{1}{n}\sum_{m>n/2}^{n-1}\one_{\{k | m\}}  \left(1-\frac{m}{n}\right)^{\theta-1}  \right)\\
&=
O\left( \frac{\log(n)}{k} + \frac{1}{n}\int_{n/(2k)}^{(n-1)/k} \left(1-\frac{kx}{n}\right)^{\theta-1} \,dx\right)\\
&=
O\left( \frac{\log(n)}{k} +  \frac{1}{k} \right) = O\left( \frac{\log(n)}{k}\right).
\end{align*}
This completes the proof of $(1)$. Furthermore,
\begin{eqnarray*}
 \ET{D_{nk}(D_{nk} - 1)}
= \ET{\bigg(\sum_{m=1}^n C_m \one_{\{k | m\}}\bigg) \bigg(\sum_{m=1}^n C_m \one_{\{k | m\}} - 1\bigg)}\\
= \ET{\sum_{m,m'=1}^n C_m C_{m'} \one_{\{k|m \, ; \, k|m'\}} - \sum_{m=1}^n C_m \one_{\{k | m\}}}\\
= \ET{\sum_{\substack{m,m'=1 \\ m\neq m'}}^n C_m C_{m'} \one_{\{k|m \, ; \, k|m'\}} + \sum_{m=1}^n C_m (C_m - 1) \one_{\{k | m\}} }\\
= 
\sum_{\substack{m,m'=1 \\ m\neq m'}}^n \frac{\theta_m}{m}\frac{\theta_{m'}}{m'} \one_{\{k|m \, ; \, k|m'\}} \frac{h_{n-m-m'}}{h_n}
+
\sum_{m=1}^n \left(\frac{\theta_m}{m}\right)^2 \one_{\{k | m\}}\frac{h_{n-2m}}{h_n}.
\end{eqnarray*}
%
%
A similar argument as for $\ET{D_{nk}}$ gives the upper bound in $(2)$. 
\end{proof}

With Lemma~\ref{lem:classF_closeness1 log Y_n and log O_n} at hand, one can directly deduce the Erd\"os-Tur\'an law as it was stated in \eqref{thm:intro_clt_order} for uniform random permutations.

\begin{theorem}\label{thm:classF_logOn_central_limit}
Suppose that $g_\Theta(t)$ belongs to $\mathcal{F}(r,\vartheta, K)$, then
\begin{align*}
 \frac{\log O_n - \frac{\vartheta}{2}\log^2(n)}{\sqrt{\frac{\vartheta}{3}\log^3(n)}} \overset{d}{\longrightarrow} \mathcal{N}(0,1), 
\end{align*}
where $\mathcal{N}(0,1)$ denotes a standard Gaussian random variable.
\end{theorem}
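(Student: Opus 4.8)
The plan is to establish the central limit theorem first for the approximating random variable $\log Y_n$ and then to transfer it to $\log O_n$ using the closeness Lemma~\ref{lem:classF_closeness1 log Y_n and log O_n}. For the first step I would work with the moment generating function identity \eqref{eq:generating_series_logYn_allgemein}, namely
\begin{align*}
\sum_{n=0}^\infty h_n \E_\Theta[\exp(s\log Y_n)]\, t^n = \exp\left(\sum_{m=1}^\infty \frac{\theta_m}{m^{1-s}} t^m\right).
\end{align*}
The key observation is that the right-hand side, as a function of $t$, is again of a generalized-Ewens type: writing $\theta_m r^m = \vartheta + \eps_m$ from \eqref{eq:classF_theta_k_to_vartheta}, the exponent becomes $\sum_m \frac{\vartheta + \eps_m}{m}(t/r)^m m^{s} + \ldots$, and since $m^s = e^{s\log m}$ one should check (via singularity analysis à la \cite{FlSe09}, exactly as used to prove Corollary~\ref{cor:haviour_of_hn}) that for $s = s_n \to 0$ at an appropriate rate the dominant singularity is still at $t=r$ with a logarithmic-type behaviour, yielding an asymptotic expansion
\begin{align*}
h_n\, \E_\Theta[\exp(s\log Y_n)] = \frac{e^{K(s)}}{r^n \Gamma(\vartheta)} n^{\vartheta - 1} \exp\!\big(s\cdot \tfrac{\vartheta}{2}\log^2 n + s^2 \cdot \tfrac{\vartheta}{3}\log^3 n + o(s^2\log^3 n)\big)\big(1+o(1)\big).
\end{align*}
Dividing by $h_n$ from Corollary~\ref{cor:haviour_of_hn} and substituting $s = u/\sqrt{\tfrac{\vartheta}{3}\log^3 n}$ gives that the normalized cumulant generating function of $\log Y_n$ converges to $u^2/2$, which by Curtiss's theorem (convergence of moment generating functions in a neighbourhood of $0$) yields the CLT for $(\log Y_n - \tfrac{\vartheta}{2}\log^2 n)/\sqrt{\tfrac{\vartheta}{3}\log^3 n}$.

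The second step is routine given Lemma~\ref{lem:classF_closeness1 log Y_n and log O_n}: since $\Delta_n = \log Y_n - \log O_n \geq 0$ and, taking $\kappa = 2$ in the lemma, $\PT{\Delta_n \geq \log(n)(\log\log n)^2} = O\big((\log\log n)^{-1}\big) \to 0$, the difference $\Delta_n$ divided by the normalization $\sqrt{\tfrac{\vartheta}{3}\log^3 n}$ tends to $0$ in probability. Slutsky's theorem then transfers the limit law from $\log Y_n$ to $\log O_n$. I would note here that the excerpt's text ("one can directly deduce the Erd\H{o}s--Tur\'an law") signals that the authors intend precisely this reduction, so the CLT for $\log Y_n$ itself is presumably either already available in \cite{NiStZe13} or \cite{StZe14a}, or follows immediately from the generating-function computation above; if it can be cited, the proof collapses to just the Slutsky argument.

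The main obstacle is the singularity analysis for the perturbed generating series uniformly in the small parameter $s$. One has to control how the singular expansion of $\exp\big(\sum_m \theta_m m^{s-1} t^m\big)$ at $t=r$ depends on $s$ — in particular that the error term $K + O(t-r)$ in Definition~\ref{def_function_class_F_alpha_r} degrades in a controlled way, and that the $\eps_m$ contributions (which are only summable, not vanishing) do not spoil the $\log^2 n$ and $\log^3 n$ terms in the exponent. This requires either a transfer theorem with explicit dependence on parameters or, alternatively, bypassing it entirely by computing $\E_\Theta[\log Y_n]$ and $\var_\Theta(\log Y_n)$ directly from $\E_\Theta[C_m] = \frac{\theta_m}{m}\frac{h_{n-m}}{h_n}$ and its analogue for covariances (as in the proof of Proposition~\ref{prop:bound_Dnk}), getting $\E_\Theta[\log Y_n] \sim \tfrac{\vartheta}{2}\log^2 n$ and $\var_\Theta(\log Y_n) \sim \tfrac{\vartheta}{3}\log^3 n$, and then invoking a Lindeberg-type or martingale CLT; but since the $C_m$ are not independent under $\mathbb{P}_\Theta$, the generating-function route is cleaner and is the one I would pursue.
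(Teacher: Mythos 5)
Your reduction step is exactly the paper's: the proof given there consists precisely of the observation that Lemma~\ref{lem:classF_closeness1 log Y_n and log O_n} (with $\kappa>1$) forces $\Delta_n/\log^{3/2}(n)\to 0$ in probability, so that Slutsky reduces everything to a CLT for $\log Y_n$. Where you diverge is in how that CLT is obtained. The paper does not touch the moment generating function of $\log Y_n$ here at all; it invokes the functional central limit theorem for the cycle counts proved in \cite[Theorem~5.5]{NiStZe13} and runs the DeLaurentis--Pittel argument \cite{DePi85}, which deduces the Erd\"os--Tur\'an law for $\log Y_n$ from that invariance principle. That route requires no control of a perturbed generating series and works on all of $\mathcal F(r,\vartheta,K)$.

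The obstacle you flag in your own route is a genuine gap, and it is exactly the one the paper records in Remark~\ref{rem:why_not_Yn}: to push the contour of Figure~\ref{fig:curve_flajolet_4} beyond $|t|=r$ one needs $e(s,t)=\sum_m(m^s-1)\frac{\theta_m}{m}t^m$ to continue analytically to a $\Delta_0$-domain, and membership of $g_\Theta$ in $\mathcal F(r,\vartheta,K)$ does not provide this. Moreover, even formally the singularity of $\sum_m\theta_m m^{s-1}t^m$ at $t=r$ for $s\neq0$ is of algebraic type $\sim\vartheta\,\Gamma(s)(1-t/r)^{-s}$ (a polylogarithm of index $1-s$), not logarithmic, so the transfer theorem behind Corollary~\ref{cor:haviour_of_hn} does not apply as stated and a uniform analysis in the transition regime $s\to0$ would be needed. (There is also a factor-of-two slip: the quadratic term in your claimed expansion should read $\tfrac{s^2}{2}\cdot\tfrac{\vartheta}{3}\log^3 n$.) Within the paper's own toolkit the clean repair is the truncation of Section~\ref{subsection:classF_truncated_order}: Lemma~\ref{lem:E_tilde_Yn} gives $\ET{e^{s\log\widetilde Y_n/\log(n)}}=\exp\bigl(\sum_{m\le b_n}(e^{s\log(m)/\log(n)}-1)\tfrac{\theta_m}{m}r^m\bigr)\bigl(1+O(n^{-1})\bigr)$ uniformly for bounded $s$ and with no extra analytic hypotheses, because the truncated series is a polynomial; taking $s=u/\sqrt{\log(n)}$ there and using $\theta_mr^m=\vartheta+\eps_m$ with $\sum_m|\eps_m|/m<\infty$ yields the CLT for $\log\widetilde Y_n$ with the stated centering and variance, and the remainders $\log Y_n-\log\widetilde Y_n$ and $\Delta_n$, both of order $\log(n)\log\log(n)=o(\log^{3/2}(n))$ in probability, are absorbed by Slutsky. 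Either that repair or the paper's citation of \cite{NiStZe13} closes the argument; as written, your main step is not yet a proof.
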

\begin{proof}
Given Lemma~\ref{lem:classF_closeness1 log Y_n and log O_n}, it suffices to show the required asymptotic holds for $\log Y_n$. In a beautiful proof,  DeLaurentis and Pittel \cite{DePi85} deduce this for the uniform measure from a functional version of the central limit theorem for the cycle counts. The analogue result for the generalized Ewens measure was proved in \cite[Theorem 5.5]{NiStZe13}. The rest of the proof is completely similar to the proof in \cite{DePi85}.
\end{proof}

\vskip 15pt 

\subsection{The truncated order}
\label{subsection:classF_truncated_order}

To establish further properties of the order of weighted permutations, it turns out to be convenient to introduce truncated versions of $\log Y_n$ and $\log O_n$ in order to simplify computations: 
\begin{align}
\label{eq:def_On_tilde_Yn_tilde}
 \widetilde{O}_n = \lcm\{m \leq b_n; \, C_m\neq0\} \quad \text{ with }\quad b_n\coloneqq n/\log^2(n)
\end{align}
and similarly
\begin{align*}
\widetilde{Y}_n \coloneqq \prod_{m=1}^{b_n} m^{C_m} .
\end{align*}
The advantage of the truncated variables is that less analytic assumptions on the sequence of parameters $(\theta_m)_{m\geq 1}$ are required and that many computations are simpler; see also Remark~\ref{rem:why_not_Yn}. Nonetheless, $\widetilde{Y}_n$ and $\widetilde{O}_n$ share many important properties with $Y_n$ and $O_n$.
Similarly to \eqref{eq:log_On_von_mangold} we have
\begin{align}
\label{eq:log_tildeYn_von_mangold}
\log \widetilde{Y}_n &= \sum_{k \leq n} \Lambda(k) \widetilde{D}_{nk} 
\quad \text{ with }\quad 
\widetilde{D}_{nk} \coloneqq \sum_{m=1}^{b_n} C_m \one_{\{k | m\}},\\
\label{eq:log_tildeOn_von_mangold}
\log \widetilde{O}_n &= \sum_{k \leq n} \Lambda(k) \widetilde{D}^*_{nk}
\quad \text{ with }\quad 
\widetilde{D}^*_{nk} \coloneqq \min\{1,\widetilde{D}_{nk}\}.
\end{align}
%
%
%
%

Our basic strategy is as follows. We will establish properties of $\log\widetilde{Y}_n$ and transfer them to $\log\widetilde{O}_n$ and finally to $\log O_n$. For the first transfer, define
$$
\widetilde{\Delta}_n\coloneqq
\log \widetilde{Y}_n -\log \widetilde{O}_n$$
and notice that $0\leq \widetilde{\Delta}_n \leq \Delta_n$. Thus, Lemma~\ref{lem:classF_closeness1 log Y_n and log O_n} yields
\begin{align}\label{eq:classF_Delta_tilde}
 \PT{\widetilde{\Delta}_n \geq \log(n)(\log\log(n))^{\kappa}}
&= O\big((\log\log (n))^{1-\kappa}\big).
\end{align}
For the second transfer, notice that
\begin{align*}
\log O_n - \log \widetilde{O}_n 
\leq \log Y_n - \log \widetilde{Y}_n
= \sum_{m=b_n +1}^n \log(m) \, C_m = O(\log(n)\log\log(n)).
\end{align*}
In order to study $\log\widetilde{Y}_n$, we need its moment generating function.
\begin{lemma} \label{lem:classF_generating_E_log_Y_tilde}
Let $g_\Theta(t)$ be as in \eqref{eq:def_g_theta} and $s\in\C$, then

\begin{enumerate}
\item[\textup{(1)}]  $\displaystyle \ET{\log \widetilde{Y}_n}
= \frac{1}{h_n} \nth{ \left(\sum_{m=1}^{b_n} \log(m)\frac{\theta_m}{m}t^m\right) \exp\left(g_\Theta(t) \right)},$

\item[\textup{(2)}]  $\displaystyle \ET{e^{s\log \widetilde{Y}_n}}
= \frac{1}{h_n} \nth{  \exp\left(g_\Theta(t) + \left(\sum_{m=1}^{b_n} (e^{s \log(m)}-1)\frac{\theta_m}{m}t^m\right)\right)}, $
\end{enumerate}
where the functions on the right-hand sides are considered as formal power series in $t$.
\end{lemma}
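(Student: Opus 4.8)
The plan is to derive both formulas from the ``randomization'' (grand-canonical) identity already recorded in \eqref{eq:generating_series_logYn_allgemein}, but truncated at $b_n$. Recall that $\Pn{\sigma}$ gives weight $\frac{1}{h_n n!}\prod_m \theta_m^{C_m}$, and that the cycle index / exponential formula gives
\begin{align*}
\sum_{n=0}^\infty h_n t^n \En{\prod_{m=1}^n x_m^{C_m}}
= \exp\left(\sum_{m=1}^\infty \frac{\theta_m}{m} x_m t^m\right),
\end{align*}
which is exactly the statement behind Lemma 2.5 and equation (2.6) in \cite{StZe14a}. The key point is that $\log\widetilde Y_n = \sum_{m=1}^{b_n}\log(m) C_m$ depends only on the cycle counts $C_m$ with $m\le b_n$, so to compute $\En{e^{s\log\widetilde Y_n}}$ I would substitute $x_m = e^{s\log(m)} = m^{s}$ for $m\le b_n$ and $x_m = 1$ for $m > b_n$ into the cycle-index identity. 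This immediately yields
\begin{align*}
\sum_{n=0}^\infty h_n t^n \En{e^{s\log\widetilde Y_n}}
= \exp\left(\sum_{m=1}^{b_n}\frac{\theta_m}{m} m^{s} t^m + \sum_{m> b_n}\frac{\theta_m}{m} t^m\right)
= \exp\left(g_\Theta(t) + \sum_{m=1}^{b_n}(m^s - 1)\frac{\theta_m}{m} t^m\right),
\end{align*}
using $g_\Theta(t) = \sum_{m\ge 1}\frac{\theta_m}{m}t^m$. Extracting the coefficient of $t^n$ and dividing by $h_n$ gives part (2). There is a subtlety worth flagging explicitly: $b_n = n/\log^2(n)$ depends on $n$, so this is not literally a single generating function in $t$ but rather a triangular/formal manipulation — for each fixed $n$ the coefficient $[t^n]$ on the left only sees terms with $m\le n$, and the truncation level $b_n$ is fixed once $n$ is fixed, so the identity is legitimate as an equality of the $n$-th coefficients; I would state this carefully, which is why the lemma says ``considered as formal power series in $t$.''

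For part (1), I would differentiate part (2) with respect to $s$ and evaluate at $s=0$. Since $\frac{d}{ds}\, m^s = \log(m)\, m^s$, differentiating the exponent $g_\Theta(t) + \sum_{m=1}^{b_n}(m^s-1)\frac{\theta_m}{m}t^m$ gives $\sum_{m=1}^{b_n}\log(m)\, m^s\,\frac{\theta_m}{m}t^m$, which at $s=0$ equals $\sum_{m=1}^{b_n}\log(m)\frac{\theta_m}{m}t^m$; and the exponential factor at $s=0$ is just $\exp(g_\Theta(t))$. On the left-hand side, $\frac{d}{ds}\big|_{s=0}\En{e^{s\log\widetilde Y_n}} = \En{\log\widetilde Y_n}$. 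Hence
\begin{align*}
h_n\En{\log\widetilde Y_n} = \nth{\left(\sum_{m=1}^{b_n}\log(m)\frac{\theta_m}{m}t^m\right)\exp(g_\Theta(t))},
\end{align*}
which is part (1) after dividing by $h_n$. As an alternative to differentiation one can also get (1) directly from Lemma 2.5 of \cite{StZe14a} (the formula $\En{C_m} = \frac{\theta_m}{m}\frac{h_{n-m}}{h_n}$ already used in the proof of Proposition~\ref{prop:bound_Dnk}) together with linearity of expectation: $\En{\log\widetilde Y_n} = \sum_{m=1}^{b_n}\log(m)\En{C_m} = \frac{1}{h_n}\sum_{m=1}^{b_n}\log(m)\frac{\theta_m}{m}h_{n-m}$, and recognizing the right side as the $n$-th coefficient of the stated product since $h_{n-m} = [t^{n-m}]\exp(g_\Theta(t))$.

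The main obstacle is not any hard estimate — there is none here — but rather making the formal-power-series bookkeeping airtight given that the truncation bound $b_n$ depends on $n$ while we are manipulating objects indexed by $t^n$. I would address this by fixing $n$ first, treating $b = b_n$ as a constant, working with the honest generating function $\sum_N h_N t^N \E_\Theta^N[e^{s\log\widehat Y_N}]$ where $\widehat Y_N = \prod_{m\le b} m^{C_m}$ uses the \emph{fixed} cutoff $b$, reading off its $n$-th coefficient (at which point $\widehat Y_n = \widetilde Y_n$ because the cutoff matches), and only then substituting $b = b_n$ back. A second minor point to handle cleanly is the interchange of $\frac{d}{ds}$ with the coefficient-extraction operator $[t^n]$ in the derivation of (1); since $[t^n]$ picks out a single coefficient that is an entire function of $s$ (a finite sum/product, or a convergent series with the usual analyticity from singularity analysis), differentiating term by term is justified, but I would remark on it rather than leave it implicit.
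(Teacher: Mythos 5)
Your proposal is correct and follows essentially the same route as the paper: apply the cycle index theorem with weights $e^{s\log(m)}\theta_m$ for $m\le c$ and $\theta_m$ for $m>c$ with a fixed cutoff $c$, extract the coefficient of $t^n$, substitute $c=b_n$ afterwards, and obtain (1) from (2) by differentiating at $s=0$. Your explicit handling of the $n$-dependence of $b_n$ (fix the cutoff first, substitute at the end) is exactly the device the paper uses.
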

\begin{proof}
Equation $(1)$ follows from $(2)$ by differentiating once with 
respect to $s$ and substituting $s=0$. We thus only have to prove $(2)$.
For this, let $c\in \N$ be fixed and consider $Y_n^c \coloneqq \prod_{m=1}^c m^{C_m}$.
 We now apply the so called cycle index theorem with the formulation in 
Lemma 2.3 in \cite{StZe14a} with $a_m= e^{s \log(m)} \theta_m$ for $m\leq c$ and $a_m=\theta_m$ for $m>c$. 

We then have as formal power series
\begin{align*}
 \sum_{n=0}^\infty h_n t^n \ET{e^{s\log Y_n^c} }
&=
 \sum_{n=0}^\infty \frac{t^n}{n!} \sum_{\sigma\in\Sn} \prod_{m=1}^c (e^{s\log(m)} \theta_m)^{C_m} \prod_{m=c+1}^\infty \theta_m^{C_m}\nonumber\\
&=
\exp\left( \sum_{m=1}^c e^{s\log(m)} \frac{\theta_m}{m} t^m + \sum_{m=c+1}^\infty \frac{\theta_m}{m} t^m \right)\nonumber\\
&=
\exp\left(g_\Theta(t) + \left(\sum_{m=1}^c (e^{s\log(m)}-1)\frac{\theta_m}{m}t^m\right)\right).
\end{align*}
Now identify the coefficients of $t^n$ on both sides and obtain
\begin{align*}
\ET{e^{s\log Y_n^c}}
&=
\frac{1}{h_n} \nth{
\exp\left(g_\Theta(t) + \left(\sum_{m=1}^c (e^{s\log(m)}-1)\frac{\theta_m}{m}t^m\right)\right)}.
\end{align*}
Equation (2) now follows by substituting $c=b_n$. 
\end{proof}
The previous lemma yields
\begin{lemma}
\label{lem:E_tilde_Yn}
If $g_\Theta$ belongs to $\mathcal{F}(r,\vartheta, K)$, then 
\begin{align*}
\ET{\log \widetilde{Y}_n}
=
\sum_{m=1}^{b_n} \frac{\log(m)}{m} \theta_m r^m + O\left(\log^{-1}(n) \right) .
\end{align*}
Furthermore we get for $s\in\C$
\begin{align*}
 \ET{e^{s\frac{\log \widetilde{Y}_n}{\log(n)}}}
&=
\exp\left(
\sum_{m=1}^{b_n} \left(e^{s\frac{\log(m)}{\log(n)}}-1\right)\frac{\theta_m}{m}r^m
\right)
\left(1+O\left(n^{-1}\right)  \right)
\end{align*}
and the error term is uniform in $s$ for $s$ bounded.
\end{lemma}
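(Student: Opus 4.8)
The plan is to extract the asymptotics from the exact generating-function identities in Lemma~\ref{lem:classF_generating_E_log_Y_tilde} by singularity analysis, using Corollary~\ref{cor:haviour_of_hn} for the denominator $h_n$. For the first statement, I would write
\[
\ET{\log\widetilde Y_n}
=\frac{1}{h_n}\,[t^n]\Big(P(t)\,e^{g_\Theta(t)}\Big),
\qquad P(t):=\sum_{m=1}^{b_n}\log(m)\,\frac{\theta_m}{m}t^m,
\]
and compare this with $[t^n]e^{g_\Theta(t)}=h_n$. The key observation is that $P(t)$ is a \emph{polynomial} of degree $b_n=n/\log^2(n)$, so all of its coefficients sit at indices $m\le b_n$, which is far from $n$. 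Hence
\[
[t^n]\big(P(t)e^{g_\Theta(t)}\big)=\sum_{m=1}^{b_n}\log(m)\frac{\theta_m}{m}\,h_{n-m},
\]
and I would substitute the asymptotic $h_{n-m}=\dfrac{(n-m)^{\vartheta-1}e^{K}}{r^{n-m}\Gamma(\vartheta)}(1+O(1/(n-m)))$ from Corollary~\ref{cor:haviour_of_hn}, valid uniformly since $n-m\ge n-b_n\sim n$. Dividing by $h_n$ gives $h_{n-m}/h_n=r^m\big(1+O(m/n)\big)$ uniformly for $m\le b_n$, so
\[
\ET{\log\widetilde Y_n}
=\sum_{m=1}^{b_n}\frac{\log(m)}{m}\theta_m r^m
+O\!\Big(\frac1n\sum_{m=1}^{b_n}\log(m)\,\theta_m r^m\Big).
\]
Using $\theta_m r^m=\vartheta+\eps_m$ with $|\eps_m|=O(1)$ from \eqref{eq:classF_theta_k_to_vartheta}, the error sum is $O\big(b_n\log(n)/n\big)=O(\log^{-1}(n))$, which is exactly the claimed bound.

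For the second statement I would proceed analogously with $s$ replaced by $s/\log(n)$ in Lemma~\ref{lem:classF_generating_E_log_Y_tilde}(2). Set
\[
Q(t):=\sum_{m=1}^{b_n}\Big(e^{s\log(m)/\log(n)}-1\Big)\frac{\theta_m}{m}t^m,
\]
again a polynomial of degree $b_n$, so that $h_n\,\ET{e^{s\log\widetilde Y_n/\log(n)}}=[t^n]\big(e^{g_\Theta(t)+Q(t)}\big)=[t^n]\big(e^{g_\Theta(t)}\sum_{j\ge0}Q(t)^j/j!\big)$. The coefficient $[t^n]\big(e^{g_\Theta(t)}Q(t)^j/j!\big)$ equals $\frac1{j!}\sum \big(\prod_{i=1}^j q_{m_i}\big)h_{n-m_1-\cdots-m_j}$ over tuples with each $m_i\le b_n$, where $q_m$ is the $m$-th coefficient of $Q$. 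Since $m\le b_n$ forces $e^{s\log(m)/\log(n)}-1=O(1)$ for bounded $s$ (as $\log(m)/\log(n)\le1$), one has $\sum_{m\le b_n}|q_m|=O(1)$ uniformly; and for the terms that contribute to coefficient $n$ the total mass $m_1+\cdots+m_j$ is bounded by $n$, but more importantly the dominant contribution is a convergent series. I would then argue that $h_{n-M}/h_n=r^M(1+O(M/n))$ whenever the total shift $M=m_1+\cdots+m_j$ satisfies $M\le n/2$ (say), and that the tail $M>n/2$ is negligible because it requires $j\gtrsim \log(n)$ summands and $\sum_m |q_m| r^m = O(1)$ makes the multinomial weight summed over such $j$ super-exponentially small. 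Resumming the main part gives $\sum_{j\ge0}\frac1{j!}\big(\sum_{m\le b_n}q_m r^m\big)^j=\exp\big(\sum_{m\le b_n}(e^{s\log(m)/\log(n)}-1)\frac{\theta_m}{m}r^m\big)$, and collecting the $O(M/n)$ corrections produces the uniform $(1+O(n^{-1}))$ factor.

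The routine bookkeeping — writing out the convolution, inserting Corollary~\ref{cor:haviour_of_hn}, bounding error sums with \eqref{eq:classF_theta_k_to_vartheta} and \eqref{eq:intro_Mangoldt_sum1} — is straightforward. \emph{The main obstacle} is making the second part rigorous: one must control the full sum $[t^n]e^{g_\Theta+Q}$, not just finitely many convolution terms, and show that replacing every $h_{n-M}$ by $r^M h_n(1+O(M/n))$ is legitimate \emph{uniformly in $s$} and that the resulting doubly-infinite resummation converges to the clean exponential. This requires a clean a priori bound on $\sum_{m\le b_n}|q_m|r^m$ (hence on $\|Q\|$ on a circle of radius $r$) and a quantitative statement that the coefficients of $e^{g_\Theta}$ are comparable to $h_n r^{-M}$ for all $M$ from $0$ up to $n$ — the regime $M$ close to $n$ being the delicate one, handled by the factorial decay of $1/j!$ against the polynomially-many ways to write $n$ as such a sum. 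Once that uniform comparison is in place, everything else is a geometric/exponential resummation.
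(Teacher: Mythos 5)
Your treatment of the first identity is correct and genuinely different from the paper's argument: the paper evaluates the Cauchy integral over a Hankel-type contour hugging the singularity at $r$ (singularity analysis in the style of Theorem~VI.3 of Flajolet--Sedgewick), whereas you exploit the fact that $P$ is a polynomial of degree $b_n=n/\log^2(n)$, so the convolution $\sum_{m\le b_n}p_m h_{n-m}$ only involves indices $n-m\ge n-b_n$, where Corollary~\ref{cor:haviour_of_hn} applies uniformly and gives $h_{n-m}/h_n=r^m\bigl(1+O(m/n)\bigr)$. The resulting error $O\bigl(n^{-1}\sum_{m\le b_n}\log(m)\theta_m r^m\bigr)=O(\log^{-1}(n))$ matches the claim, and this route is shorter and more elementary than the contour computation for part (1).

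The second part, however, contains a genuine gap that is not just the deferred ``routine bookkeeping''. First, a computational point: $\sum_{m\le b_n}|q_m|r^m$ is not $O(1)$; using $\theta_m r^m=\vartheta+\eps_m$ and $|e^{s\log(m)/\log(n)}-1|=O(\log(m)/\log(n))$ for bounded $s$, one gets $\sum_{m\le b_n}|q_m|r^m=O(\log^2(b_n)/\log(n))=O(\log(n))$, so $\exp\bigl(\sum_m|q_m|r^m\bigr)=n^{O(1)}$. Second, and decisively, your replacement $h_{n-M}\mapsto r^Mh_n(1+O(M/n))$ with errors controlled in absolute value produces an additive error of size $n^{-1}\sum_M M|c_M|r^M\asymp n^{-1}\bigl(\sum_m m|q_m|r^m\bigr)\exp\bigl(\sum_m|q_m|r^m\bigr)$, and since $\sum_m m|q_m|r^m=O(b_n)$ this is $n^{O(1)}/\log^2(n)$; meanwhile the target main term $\exp(Q(r))$ has modulus $e^{\Re Q(r)}$, which for complex $s$ (e.g.\ purely imaginary) can be as small as $n^{-c}$. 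Hence no multiplicative $(1+o(1))$, let alone $(1+O(n^{-1}))$, follows from absolute-value bounds; even for real $s>0$, where all coefficients are nonnegative, this route only yields $(1+O(\log^{-2}(n)))$ because $rQ'(r)/n\asymp b_n/n=\log^{-2}(n)$. The paper's proof sidesteps exactly this issue by keeping the perturbation $\widetilde{e}(s,t)$ \emph{inside the exponent} along the Hankel contour, so that $e^{\widetilde{e}(s,r)}$ factors out exactly and the error enters additively in the exponent, i.e.\ is automatically measured relative to the (possibly small) main term. To salvage your approach you would have to track the signed quantities $\sum_M M^j c_M r^M$, which are derivatives of $e^{Q}$ at $r$, rather than $\sum_M M^j|c_M|r^M$ --- and even then the leading correction is of genuine order $\log^{-2}(n)$, which indicates that the stated $(1+O(n^{-1}))$ is optimistic; the applications later in the paper only require a relative error that is $o(1)$ on the scale of $\log^{-1/3}(n)$, so this does not affect them, but it does mean the precise error term cannot be reached by your argument.
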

\begin{proof}
We use Lemma~\ref{lem:classF_generating_E_log_Y_tilde} and get with Cauchy's integral formula
\begin{align*}
 h_n \ET{\log \widetilde{Y}_n}
&= 
\frac{1}{2 \pi i} \int_{\gamma}  \widetilde{q}_1(t)\exp\left(g_\Theta(t) \right) \frac{dt}{t^{n+1}},\\
%
 h_n \ET{e^{s\frac{\log \widetilde{Y}_n}{\log(n)}}}
&= 
\frac{1}{2 \pi i} \int_{\gamma}  \widetilde{e}(s,t)\exp\left(g_\Theta(t) \right) \frac{dt}{t^{n+1}}
\end{align*}
where $\gamma$ is a simple closed curve around $0$ and 
\begin{align*}
 \widetilde{q}_1(t) \coloneqq \sum_{m=1}^{b_n} \log(m)\frac{\theta_m}{m}t^m, 
 \quad \quad 
 \widetilde{e}(s,t) \coloneqq \sum_{m=1}^{b_n} \big(e^{s\frac{\log(m)}{\log(n)}}-1\big)\frac{\theta_m}{m}t^m.
\end{align*}
By assumption, $g_\Theta$ is analytic in a domain $\Delta_0=\Delta(r,R,\phi)$; see Definition~\ref{def_delta_0}. 
We choose for both integrals the curve $\gamma$ as in Figure~\ref{fig:curve_flajolet_4}, such that $\gamma$ is contained in the $\Delta_0$-domain.
More precisely, we choose the radius of the big circle $\gamma_4$ as $R^\prime\coloneqq r(1+b_n^{-1})$ with $b_n$ as in \eqref{eq:def_On_tilde_Yn_tilde}, the radius of the small circle as $1/n$ and 
the angle of the line segments independent of $n$.
Notice that $\widetilde{q}_1(t)$ and $\widetilde{e}(s,t)$ are for given $n$ polynomials and we thus do not require any further analytic assumptions to use this curve.
%
%
%
%
\begin{figure}[h]
\centering
\subfigure[\,$\gamma=\gamma_1\cup\gamma_2\cup\gamma_3\cup\gamma_4$]{\hspace{-1pc}
\includegraphics[height=.18\textheight]{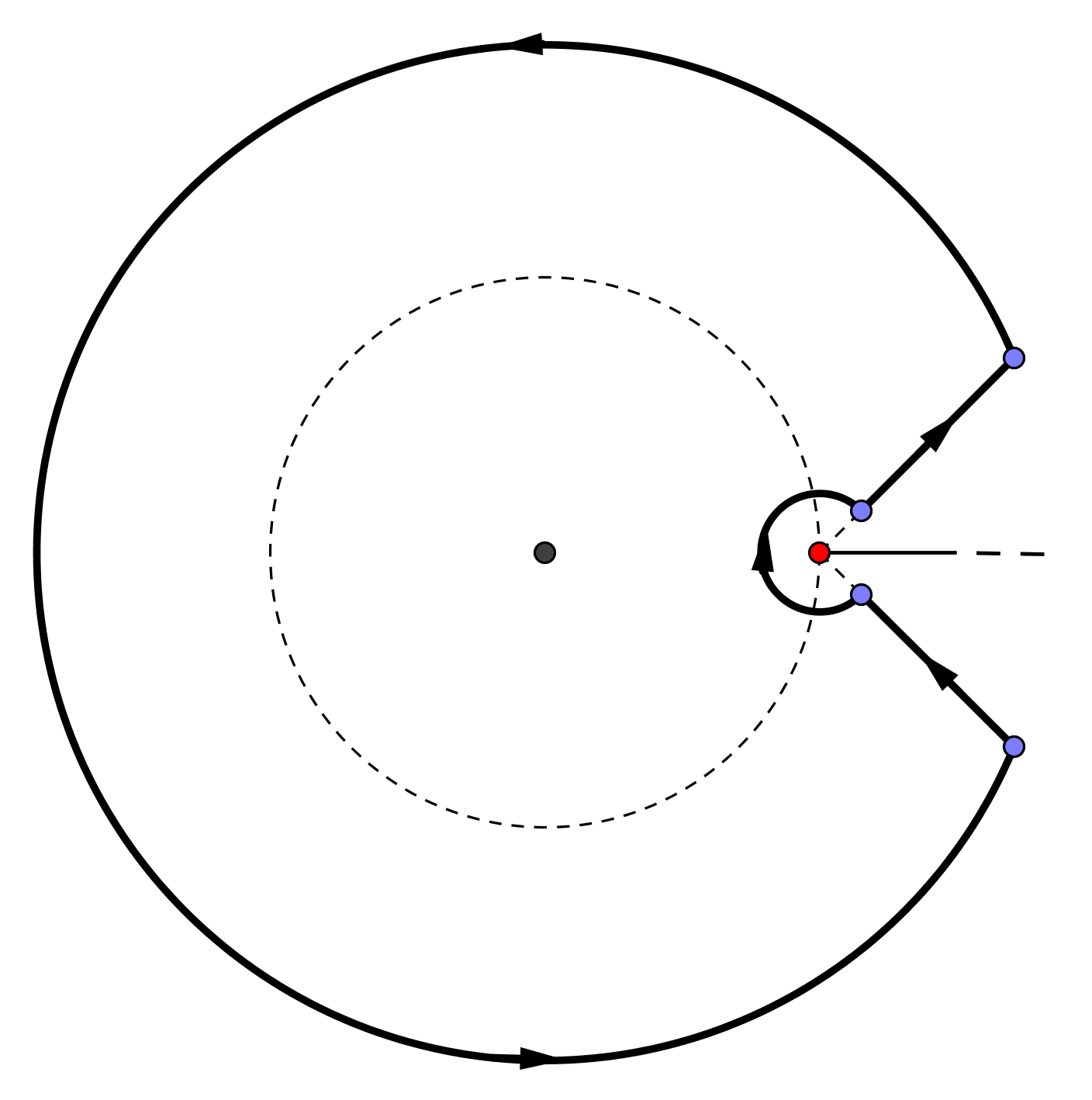}
 \label{fig:curve_flajolet_4}
 \put(-57,42){\mbox{\scriptsize$0$}}
  \put(-70,82){\mbox{\scriptsize$|z|= r$}}
  \put(-103,100){\mbox{\scriptsize$|z|= R^\prime$}}
  \put(-47,55){\mbox{\scriptsize$\gamma_2$}}
   \put(-25,67){\mbox{\scriptsize$\gamma_3$}}
   \put(-25,36){\mbox{\scriptsize$\gamma_1$}}
   \put(-100,55){\mbox{\scriptsize$\gamma_4$}}
 }
\hspace{2.5pc}\subfigure[\,$\gamma'=\gamma'_1\cup\gamma'_2\cup\gamma'_3$]{
   \includegraphics[height=.18\textheight]{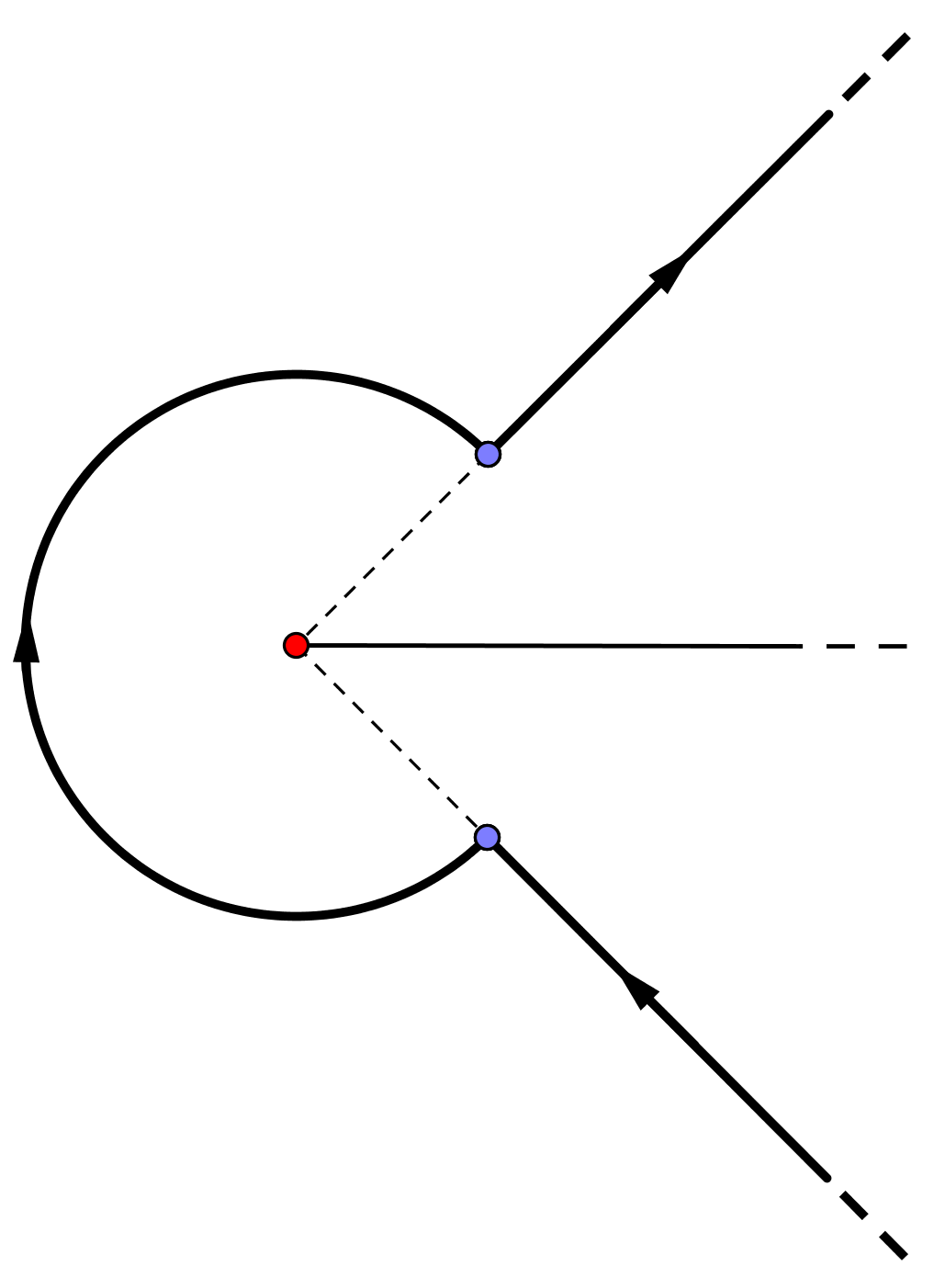}
   \label{fig:gamma'}
   \put(-58,42){\mbox{\scriptsize$0$}}
   \put(-83,78){\mbox{\scriptsize$|w| =1$}}
    \put(-26,27){\mbox{\scriptsize$\gamma'_1$}}
   \put(-96,55){\mbox{\scriptsize$\gamma'_2$}}
   \put(-26,104){\mbox{\scriptsize$\gamma'_3$}}
 }
\hspace{2.5pc}\subfigure[\,$\gamma''=\gamma''_1\cup\gamma''_2\cup\gamma''_3$]{
   \includegraphics[height=.18\textheight]{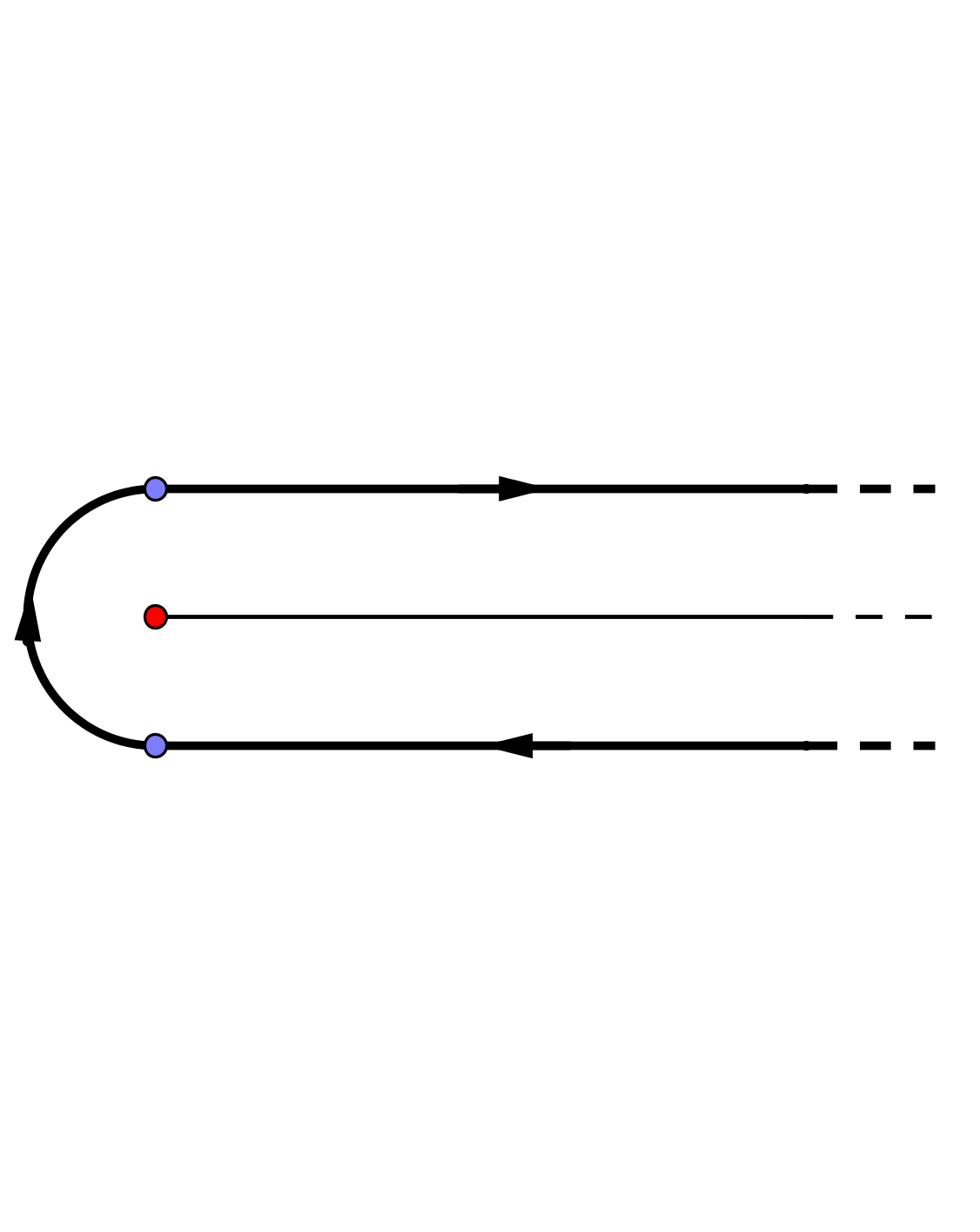}
   \label{fig:gamma''}
    \put(-76,43,5){\mbox{\scriptsize$0$}}
    \put(-46,33){\mbox{\scriptsize$\gamma''_1$}}
    \put(-100,53.5){\mbox{\scriptsize$\gamma''_2$}}
    \put(-46,73){\mbox{\scriptsize$\gamma''_3$}}
 }
\caption{{The curves used in the proof of Lemma~\ref{lem:E_tilde_Yn}.}}
\end{figure}
First, consider the integral over the big circle $\gamma_4$ and show that its contribution is negligible.
We get with \eqref{eq:classF_theta_k_to_vartheta} and for $\varphi\in[-\pi,\pi]$ 
\begin{align*}
 \left |\widetilde{q}_1(R'e^{i\varphi}) \right|
&=
O \left(\sum_{m=1}^{b_n} \frac{\log(m) }{m} \left(1+b_n^{-1}\right)^m  \right) \nonumber\\
 &=
O \left(\sum_{m=1}^{b_n} \frac{\log(m) }{m} \left(1+ O\left(m b_n^{-1}\right)\right)  \right)\nonumber\\
&=
O \left(\sum_{m=1}^{b_n} \frac{\log(m) }{m}  \right)
=
O( \log^2(b_n))
=
O( \log^2(n)).
\end{align*}
We have used that $m \leq b_n$ and thus $m\log (1+b_n^{-1})= \frac{m}{b_n}(1+o(1))$.
Since $(e^{\frac{s \log(m)}{\log(n)}}-1)$ is bounded for $s$ bounded, we can apply for $\widetilde{e}(s,t)$ the same estimate as for $\widetilde{q}_1$ and get
\begin{align*}
 \left |\widetilde{e}(s,R'e^{i\varphi}) \right|
&=
O( \log^2(b_n))
=
O( \log^2(n)).
\end{align*}
Furthermore, we have on the $\Delta_0$-domain
\begin{align*}
 |g_\Theta(t)| \leq \vartheta \log \left|\frac{1}{1-t/r}\right| + O(1)
 \ \implies \
|g_\Theta(R^\prime e^{i\varphi})| 
\leq
\vartheta \log(b_n) +O(1).
\end{align*}
Finally, 
\begin{align*}
  (R^\prime)^{-n}
&=
r^{-n}\big(1+n^{-1}\log^2(n)\big)^{-n} = r^{-n} \exp\big(-\log^2(n) + O(\log^4(n)/n)\big)\nonumber\\
&= 
O\bigl(r^{-n} \exp(-\log^2(n))\bigr). 
\end{align*}
Combining these three estimates yields
\begin{align*}
\left| 
\frac{1}{2 \pi i} 
\int_{\gamma_4}  \widetilde{q}_1(t)\exp\left(g_\Theta(t) \right) \frac{dt}{t^{n+1}} \right|
=
O\big(r^{-n} n^\vartheta \exp(-\log^2(n))\big).
\end{align*}
Since $h_n  \sim e^{K} n^{\vartheta-1} \bigl(\Gamma(\vartheta) r^n\bigr)^{-1}$ (see Corollary~\ref{cor:haviour_of_hn}), we can neglect the integral over $\gamma_4$ 
with respect to the scale of the problem.
Let us consider the remaining parts of the curve. The computations of the integrals over $\gamma_1,\gamma_2$ and $\gamma_3$ are 
completely similar to the computations in the proof of Theorem~VI.3 in \cite{FlSe09}. We thus give only a short overview. 
We start with $\widetilde{q}_1$ and write $t= r(1+wn^{-1})$ with $w= O(\log^2(n))$ and obtain
\begin{align}
 \widetilde{q}_1\left(r+\frac{rw}{n}\right)
&=
\sum_{m=1}^{b_n}  \frac{\log(m)}{m} \theta_m r^m \left(1+\frac{w}{n}\right)^m
=
\sum_{m=1}^{b_n}  \frac{\log(m)}{m} \theta_m r^m \left(1+O\left(\frac{mw}{n}\right)\right)\nonumber\\
&=
\sum_{m=1}^{b_n}  \frac{\log(m)}{m} \theta_m r^m + O\left( \frac{w}{n}\sum_{m=1}^{b_n}  \log(m)\right)\nonumber\\
&=
\sum_{m=1}^{b_n}  \frac{\log(m)}{m} \theta_m r^m + O\left( \frac{w}{\log(n)} \right).
\label{eq_q_b_at_r}
\end{align}
We now use the asymptotic behavior of $g_\Theta(t)$ at $r$ in \eqref{eq_class_F_r_alpha_near_r} to get
%
%
\begin{align}
& \frac{1}{2 \pi i} 
\int_{\gamma_1\cup\gamma_2\cup\gamma_3}  \widetilde{q}_1(t)\exp\left(g_\Theta(t) \right) \frac{dt}{t^{n+1}}\nonumber\\
=
 &\frac{n^{\vartheta-1}}{2 \pi i r^n} e^K
\int_{\gamma'}  \widetilde{q}_1\left(r+\frac{rw}{n}\right)(-w)^{-\vartheta} e^{-w} (1+O(w/n))\, dw
\nonumber\\
=
 &\frac{n^{\vartheta-1}}{2 \pi i r^n} e^K \left(
\sum_{m=1}^{b_n}  \frac{\log(m)}{m} \theta_m r^m \int_{\gamma'}  (-w)^{-\vartheta} e^{-w} dw + O\left(\log^{-1} (n) \right) \right),
\label{eq:E_tilde_Yn_in_proof_2}
\end{align}
where $\gamma'$ is the bounded curve in Figure~\ref{fig:gamma'}. 
%
%
We have used for the estimate of the reminder that $\Re(e^{-w})$ is decreasing exponentially fast as $\Re(w)\to \infty$.
Furthermore, we can replace with the same observation and a simple contour argument the bounded curve $\gamma'$ with the infinite Hankel contour $\gamma''$ as in Figure~\ref{fig:gamma''}.
Notice that
\begin{align}
\label{eq:classF_integral_Gamma}
 \frac{1}{2\pi i} 
\int_{\gamma''} (-w)^{-\vartheta} e^{-w}\ dw
=
\frac{1}{\Gamma(\vartheta)},
\end{align}
where $\vartheta\in\C$ is arbitrary (details can be found for instance in \cite[Section~B.3]{FlSe09}). 
Combining \eqref{eq:classF_integral_Gamma} with \eqref{eq:E_tilde_Yn_in_proof_2} and Corollary~\ref{cor:haviour_of_hn} completes the proof of the first assertion.
The argument for the second is very similar. One only has to replace \eqref{eq_q_b_at_r} by
\begin{align*}
 \widetilde{e}\left(s, r+\frac{rw}{n}\right)
&=
\sum_{m=1}^{b_n}  \big(e^{s\frac{ \log(m)}{\log(n)}}-1\big)\frac{\theta_m}{m}r^m + O\left( \frac{w}{n} \right).
\end{align*}
\end{proof}

\begin{remark}
\label{rem:why_not_Yn}
Instead of the truncated sequence  $\log \widetilde{Y}_n$ one may consider the generating functions for $\log Y_n$ which are given by
%
\begin{align*}
 \ET{\log Y_n} 
&=   \frac{1}{h_n}\nth{q_1(t)\exp\left(g_\Theta(t) \right)}, \\
\ET{e^{s\log Y_n}} 
&=
\frac{1}{h_n}\nth{
\exp\bigl(e(s,t)+g_\Theta(t) \bigr)}
\end{align*}
with
\begin{align*}
  q_1(t)\coloneqq \left(\sum_{m=1}^\infty \log(m)\frac{\theta_m}{m}t^m\right)
  \quad  \text{ and } \quad
   e(s,t)
=
\sum_{m=1}^{\infty} (e^{s \log(m)}-1)\frac{\theta_m}{m}t^m.
\end{align*}
To use the same contour as in the proof of Lemma~\ref{lem:E_tilde_Yn},  analytic extensions of $q_1(t)$ and $e(s,t)$ to some $\Delta_0$-domain plus the asymptotic behavior at $r$ are required. 
However, for all probabilistic question we consider here, except the precise expected value of $\log O_n$ in Section~\ref{subsection:classF_E_On},
it is enough to know the behavior of the truncated variables $\log \widetilde{Y}_n$ since they are transferable to $\log Y_n$.
\end{remark}
\begin{remark} \label{remark:classF_add_assumption_sum1}
To simplify computations, we will  assume in some cases 
$$\theta_m r^m =\vartheta +O(m^{-\delta})$$
for some $\delta >0$. Then the Euler Summation formula \eqref{eq:intro_euler_maclaurin} yields
 \begin{align}
 \label{eq:classF_add_assumption_sum1}
  &\sum_{m=1}^{b_n} \frac{\log(m)}{m} \theta_m r^m
  =
  \vartheta \sum_{m=1}^{b_n} \frac{\log(m)}{m}  +O(1)
  = 
  \frac{\vartheta}{2} \log^2(b_n) +  O(1), \\
&  \sum_{m=1}^{b_n} \frac{\theta_m}{m}  r^m \one_{\{k|m\}}
  =
  \vartheta \frac{\log(b_n)}{k} + O\left( \frac{\log(k)}{k}\right). \nonumber
 \end{align}
\end{remark}
With this assumption, we get a nice expression for the moment generating function of $\log \widetilde{Y}_n$.
\begin{corollary}
\label{cor:mod_conv_log Yn}
 If $g_\Theta\in\mathcal{F}(r,\vartheta, K)$ and  $\theta_m r^m =\vartheta +O(m^{-\delta})$ for some $\delta >0$, then
 \begin{align*}
 \ET{e^{s\frac{\log \widetilde{Y}_n}{\log(n)}}}
&=
\exp\left(\log(b_n) \left(\frac{e^s}{s}-\frac{1}{s}-1\right) + O\left(\frac{s}{\log(n)}\right)\right)
\left(1+O(n^{-1})  \right).
\end{align*}
%
%
%
\end{corollary}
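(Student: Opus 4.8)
The plan is to start from the second assertion of Lemma~\ref{lem:E_tilde_Yn}, which already expresses $\ET{e^{s\log \widetilde{Y}_n/\log(n)}}$ as $\exp\bigl(\sum_{m=1}^{b_n}(e^{s\log(m)/\log(n)}-1)\frac{\theta_m}{m}r^m\bigr)(1+O(n^{-1}))$, with the error uniform for bounded $s$. The only work left is to evaluate the sum in the exponent under the extra hypothesis $\theta_m r^m = \vartheta + O(m^{-\delta})$. First I would split $\theta_m r^m = \vartheta + (\theta_m r^m - \vartheta)$ and treat the two resulting sums separately. For the error part, since $|e^{s\log(m)/\log(n)}-1| = O(s\log(m)/\log(n))$ uniformly for $m \le b_n$ and bounded $s$ (because $\log(m)/\log(n) \le \log(b_n)/\log(n) \le 1$), the contribution is $O\bigl(\frac{s}{\log(n)}\sum_{m\ge1}\frac{\log(m)}{m^{1+\delta}}\bigr) = O(s/\log(n))$, which is exactly the stated error term.

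For the main part I would compute $\vartheta\sum_{m=1}^{b_n}\bigl(m^{s/\log(n)}-1\bigr)\frac{1}{m}$. Writing $\beta \coloneqq s/\log(n)$, this is $\vartheta\sum_{m=1}^{b_n}\frac{m^{\beta}-1}{m}$, and I would approximate it by the integral $\vartheta\int_1^{b_n}\frac{x^{\beta}-1}{x}\,dx = \vartheta\bigl(\frac{b_n^{\beta}-1}{\beta} - \log b_n\bigr)$ via the Euler--Maclaurin formula \eqref{eq:intro_euler_maclaurin}; the correction terms are $O(1)$ times $\beta = O(s/\log(n))$, absorbable into the stated error. Now $b_n^{\beta} = \exp(\beta\log b_n)$ and $\log b_n = \log(n) - 2\log\log(n) = \log(n)(1+o(1))$, so $\beta\log b_n = s(1 + O(\log\log(n)/\log(n)))$, hence $b_n^{\beta} = e^s(1+O(s\log\log(n)/\log(n)))$ and $\frac{b_n^{\beta}-1}{\beta} = \log(n)\cdot\frac{e^s-1}{s}(1+o(1))$. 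Keeping track of the leading term this gives $\vartheta\log(b_n)\bigl(\frac{e^s}{s}-\frac{1}{s}-1\bigr)$ plus a remainder I would bound by $O(s/\log(n))$ (after checking the lower-order pieces, in particular that replacing $\log(n)$ by $\log(b_n)$ in the prefactor costs only $O(\log\log(n)) = O(1)$, consistent with the claimed error). I note a small discrepancy: the statement writes $\log(b_n)(e^s/s - 1/s - 1)$ without the factor $\vartheta$, which I would attribute either to an implicit normalization $\vartheta = 1$ in this corollary's intended use or to a typo to be corrected; the computation itself naturally produces the factor $\vartheta$.

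The main obstacle is bookkeeping rather than any genuine difficulty: one must make sure that the several approximations ($m^{\beta}-1 = O(\beta\log m)$, Euler--Maclaurin, $b_n^{\beta} = e^s(1+O(s\log\log(n)/\log(n)))$, and $\log b_n$ versus $\log n$) all produce errors that fit inside the single claimed $O(s/\log(n))$ term multiplicatively against the $(1+O(n^{-1}))$ factor, and that uniformity in bounded $s$ is preserved at each step. In particular the cross term between the $O(m^{-\delta})$ correction and the exponential should be controlled with the convergence of $\sum m^{-1-\delta}\log m$, and one should verify that the multiplicative error $(1+O(n^{-1}))$ inherited from Lemma~\ref{lem:E_tilde_Yn} is not disturbed. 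Once these are checked, the corollary follows by combining the two sums and exponentiating.
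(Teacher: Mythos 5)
Your route is the same as the paper's: the published proof is a one-line reference to Lemma~\ref{lem:E_tilde_Yn}(2) plus the Euler--Maclaurin formula \eqref{eq:intro_euler_maclaurin}, which is exactly your plan. Your treatment of the $O(m^{-\delta})$ correction (giving $O(s/\log(n))$ via $\sum_m \log(m)m^{-1-\delta}<\infty$) and of the Euler--Maclaurin remainder (which is indeed $O(\beta)=O(s/\log(n))$, since $f'(x)=(\beta-1)x^{\beta-2}+x^{-2}=O(\beta\log(x)x^{-2}+\beta x^{\beta-2})$) are both correct, and you are right that the factor $\vartheta$ is missing from the statement as printed — the computation unavoidably produces $\vartheta\log(b_n)\bigl(\tfrac{e^s}{s}-\tfrac1s-1\bigr)$.

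The one genuine gap is in your final bookkeeping step, where you dismiss the mismatch between the normalization $\log(n)$ in the exponent and the cutoff $b_n$ in the sum. Writing $L=\log(b_n)$, $\ell=\log(n)$, the main term is exactly $\vartheta\bigl(\tfrac{\ell}{s}(e^{sL/\ell}-1)-L\bigr)+O(s/\log(n))$, and comparing this with $\vartheta L\bigl(\tfrac{e^s}{s}-\tfrac1s-1\bigr)$ leaves a discrepancy $\tfrac{\vartheta\ell\varepsilon}{s}\bigl(e^s(1-s)-1\bigr)+O(s\ell\varepsilon^2)$ with $\varepsilon=2\log\log(n)/\log(n)$, i.e.\ $-\vartheta s\log\log(n)(1+O(s))$. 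This is \emph{not} $O(s/\log(n))$, and your parenthetical claim that the replacement "costs only $O(\log\log(n))=O(1)$" is wrong twice over: $\log\log(n)$ is unbounded, and even a genuine $O(1)$ additive error would already exceed the claimed $O(s/\log(n))$. So the error term in the Corollary cannot be reached by this (or, as far as I can see, any) argument; the honest conclusion of your computation is
\begin{align*}
 \ET{e^{s\frac{\log \widetilde{Y}_n}{\log(n)}}}
=
\exp\left(\vartheta\log(b_n)\left(\frac{e^s}{s}-\frac{1}{s}-1\right)+O\bigl(s\log\log(n)\bigr)\right)\left(1+O(n^{-1})\right),
\end{align*}
uniformly for bounded $s$. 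This weaker form is in fact all that the paper ever uses (the expansion in the proof of Lemma~\ref{lem:classF_mod-conv_Y_n} carries $O(\log\log(n))$ corrections in every coefficient, and the G\"artner--Ellis limit in Theorem~\ref{thm:classF_large_dev_logO_n} divides by $\log(n)$), so the defect lies in the printed statement as much as in your proposal — but you should not paper over it; state and prove the achievable error term rather than asserting the unachievable one.
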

\begin{proof}
Corollary~\ref{cor:mod_conv_log Yn} follows immediately from Lemma~\ref{lem:E_tilde_Yn} and a simple application of the Euler summation formula \eqref{eq:intro_euler_maclaurin}.
\end{proof}
%

%
%
%
%
%

\vskip 15pt 
\subsection{A local limit theorem for $\log O_n$}\label{subsection:classF_mod_conv_local_limit}
In this section we prove that, given the characteristic function of $\log \widetilde{Y}_n$ in Lemma~\ref{lem:E_tilde_Yn}, the local behavior of the rescaled order of a permutation is well-controlled. 
To this aim, define
\begin{align*}
\mathcal{\widetilde{Y}}_n \coloneqq \frac{\log \widetilde{Y}_n - \frac{\vartheta}{2}\log^2(n)}{\log^{4/3}(n)}.
\end{align*} 
We will show that $\mathcal{\widetilde{Y}}_n$ satisfies the so-called mod-Gaussian convergence; this notion was introduced in 2011 by Jacod et al. \cite{JaKoNi09}. It has interesting applications when typically a sequence of random variables $X_n$ does not converge in distribution, meaning that the sequence of characteristic functions does not converge pointwise to a limit characteristic function, but nevertheless, the characteristic functions decay precisely like those of a suitable Gaussian $G_n$. Specifically, the convergence
\begin{align*}
\mathbb{E}[e^{it G_n}]^{-1} \mathbb{E}[e^{it X_n}] 
\rightarrow \psi(t)
\end{align*}
holds locally uniformly for $t \in \R$, where the limiting function $\psi$ is continuous on $\mathbb{R}$ with $\psi(0)=1$. More generally, mod-$\phi$ convergence with respect to other laws $\phi$ may be defined analogously. In a series of papers \cite{DeKoNi11, FeMeNi13, KoNi09b}, properties and implications of this convergence were studied. 
Here, we will apply Theorem 5 in \cite{DeKoNi11} to show that the mod-Gaussian convergence of the sequence $\mathcal{\widetilde{Y}}_n$ implies a local limit theorem for
\begin{align}\label{eq:classF_local_limit_mathcal_O}
\mathcal{O}_n \coloneqq \frac{\log O_n - \frac{\vartheta}{2}\log^2(n)}{\log^{4/3}(n)}  .
\end{align}  
We will prove 
\begin{theorem}\label{thm:classF_local_limit}
 Suppose that $g_\Theta\in \mathcal{F}(r,\vartheta,K)$ and $\theta_m r^m =\vartheta +O(m^{-\delta})$ for some $\delta>0$.
 For any bounded Borel subset $B \subset \R$ with boundary of Lebesgue measure zero
\begin{align*}
 \lim_{n \rightarrow \infty} \sigma_n \, \PT{\mathcal{O}_n \in B} = \frac{m(B)}{\sqrt{2\pi}},
\end{align*}
where $m(B)$ denotes the Lebesgue measure of $B$  and $\sigma_n = \sqrt{\frac{\vartheta}{3}}\log^{1/6}(n)$.
\end{theorem}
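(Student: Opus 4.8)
The plan is to deduce Theorem~\ref{thm:classF_local_limit} from the mod-Gaussian convergence of the sequence $\mathcal{\widetilde{Y}}_n$ together with the closeness of $\log O_n$ and $\log \widetilde{Y}_n$, by invoking the local limit theorem machinery of \cite{DeKoNi11}. The argument naturally splits into three parts: (i) establishing mod-Gaussian convergence for $\mathcal{\widetilde{Y}}_n$; (ii) applying Theorem~5 of \cite{DeKoNi11} to obtain a local limit theorem for the truncated order $\widetilde{O}_n$ (or for $\widetilde{Y}_n$); (iii) transferring the result from $\log \widetilde{Y}_n$ to $\log \widetilde{O}_n$ and finally to $\log O_n$.

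For step (i), the starting point is Corollary~\ref{cor:mod_conv_log Yn}, which gives, for $s = it$ with $t\in\R$,
\begin{align*}
\ET{e^{it \frac{\log \widetilde{Y}_n}{\log(n)}}}
=
\exp\left(\log(b_n)\left(\frac{e^{it}}{it}-\frac{1}{it}-1\right)+O\left(\frac{t}{\log(n)}\right)\right)\left(1+O(n^{-1})\right).
\end{align*}
I would substitute the rescaling that defines $\mathcal{\widetilde{Y}}_n$: write $t$ for the new Fourier variable and note $\log\widetilde{Y}_n/\log^{4/3}(n) = (\log\widetilde{Y}_n/\log(n))\cdot\log^{-1/3}(n)$, so that evaluating the above at argument $t\log^{-1/3}(n)$ and Taylor-expanding $\frac{e^{iu}}{iu}-\frac{1}{iu}-1 = -\frac{u^2}{6}+O(u^3)$ around $u = t\log^{-1/3}(n)\to 0$, and using $\log(b_n) = \log(n) - 2\log\log(n) = \log(n)(1+o(1))$, produces a main term of the form $\exp(it\cdot(\text{centering}) - \frac{t^2}{6}\log^{1/3}(n)(1+o(1)))$ plus lower-order corrections of size $\log(n)\cdot O(t^3\log^{-1}(n)) = O(t^3)$. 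After subtracting the centering $\frac{\vartheta}{2}\log^2(n)$ (which must be matched against $\log(b_n)\cdot\frac{1}{2}$ after the rescaling — here one needs $\frac{\vartheta}{2}\log^2 b_n$ against $\frac{\vartheta}{2}\log^2 n$; the discrepancy is $O(\log(n)\log\log(n))/\log^{4/3}(n)\to 0$, which is harmless), one gets that $\E[e^{it\mathcal{\widetilde{Y}}_n}]$ behaves like $e^{-\frac{t^2}{2}\sigma_n^2}\cdot e^{\frac{t^3}{6}\cdot(\text{something})}$ with $\sigma_n^2 = \frac{\vartheta}{3}\log^{1/3}(n)$, i.e. mod-Gaussian convergence with parameters $\sigma_n\to\infty$ and limiting function $\psi(t)$ coming from the cubic term — matching precisely the hypotheses of Theorem~5 in \cite{DeKoNi11} (a "mod-Gaussian with exploding variance" regime). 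I should double-check the exact normalization conventions of \cite{DeKoNi11} (whether they want $\E[e^{it X_n/\sigma_n}]e^{t^2/2}\to\psi(t)$ or a version without the internal rescaling) and cite the relevant statement accordingly.

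For step (iii), I would use $\widetilde{\Delta}_n = \log\widetilde{Y}_n - \log\widetilde{O}_n$ with the tail bound \eqref{eq:classF_Delta_tilde}: choosing $\kappa = 2$ say gives $\PT{\widetilde{\Delta}_n \geq \log(n)(\log\log(n))^2} = O((\log\log(n))^{-1}) \to 0$, and since $\log(n)(\log\log(n))^2 = o(\log^{4/3}(n))$, the rescaled difference $(\log\widetilde{Y}_n - \log\widetilde{O}_n)/\log^{4/3}(n)$ tends to $0$ in probability. Similarly, $\log O_n - \log\widetilde{O}_n$ is squeezed between $0$ and $\log Y_n - \log\widetilde{Y}_n = O(\log(n)\log\log(n))$, again $o(\log^{4/3}(n))$. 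Hence $\mathcal{O}_n$ and $\mathcal{\widetilde{Y}}_n$ differ by a quantity going to $0$ in probability, and the local limit statement is stable under such perturbations (since $B$ has boundary of measure zero and $\sigma_n\to\infty$ only polylogarithmically, one inflates $B$ by an $\varepsilon$-neighborhood, applies the local limit theorem for $\mathcal{\widetilde{Y}}_n$, and lets $\varepsilon\to 0$ using the continuity of Lebesgue measure on $B$ together with the probability of the exceptional event being $o(1/\sigma_n)$ — this last point requires a quantitative version of the closeness, which is why having the polynomial decay in \eqref{eq:classF_Delta_tilde} with a free $\kappa$ is essential).

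The main obstacle I anticipate is not any single estimate but rather the bookkeeping in step (iii): to conclude $\sigma_n\PT{\mathcal{O}_n\in B}\to m(B)/\sqrt{2\pi}$ one cannot merely use convergence in probability of $\mathcal{O}_n - \mathcal{\widetilde{Y}}_n$ to zero, because the local limit theorem counts mass on sets of vanishing width at scale $1/\sigma_n$, so the exceptional probability must be shown to be $o(1/\sigma_n)$, not just $o(1)$. Concretely one needs $\PT{|\log O_n - \log\widetilde{Y}_n| \geq \eta\log^{4/3}(n)}$ to decay faster than $\log^{-1/6}(n)$; from \eqref{eq:classF_Delta_tilde} with threshold $\eta\log^{4/3}(n) \gg \log(n)(\log\log(n))^{\kappa}$ one may take $\kappa$ as large as one likes (any fixed $\kappa$ still satisfies $\log(n)(\log\log(n))^\kappa = o(\log^{4/3}(n))$), giving decay $O((\log\log(n))^{1-\kappa})$, which for $\kappa$ large enough is indeed $o(\log^{-1/6}(n))$ — so the argument closes, but this interplay between the two scales is the delicate point and must be spelled out carefully. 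A secondary technical point is verifying that the cubic correction term in the mod-Gaussian expansion genuinely converges to a continuous limit function $\psi$ with $\psi(0)=1$ (rather than oscillating), which amounts to checking that $\log(b_n)\cdot\left(\frac{e^{it\log^{-1/3}(n)}}{it\log^{-1/3}(n)} - \frac{1}{it\log^{-1/3}(n)} - 1 + \frac{t^2\log^{-2/3}(n)}{6}\right)$ has a finite limit, which it does since the bracket is $O(t^3\log^{-1}(n))$ and is multiplied by $\log(b_n)\sim\log(n)$.
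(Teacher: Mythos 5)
Your overall strategy is the same as the paper's: mod-Gaussian convergence of $\mathcal{\widetilde{Y}}_n$ from Corollary~\ref{cor:mod_conv_log Yn} (this is the content of Lemma~\ref{lem:classF_mod-conv_Y_n}), the local limit theorem of \cite{DeKoNi11} applied to $\mathcal{\widetilde{Y}}_n$ (you should still verify the uniform integrability hypothesis H3, which the paper does in Lemma~\ref{lem:classF_local_limit_Y_n}, but this follows from the same expansion), and then a two-stage transfer $\mathcal{\widetilde{Y}}_n \to \mathcal{\widetilde{O}}_n \to \mathcal{O}_n$ via a sandwich of $B$ between Jordan-measurable sets. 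You also correctly isolate the delicate point: the exceptional probability must be $o(1/\sigma_n) = o(\log^{-1/6}(n))$, not merely $o(1)$.

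However, the specific bound you invoke to achieve this fails. You propose to use \eqref{eq:classF_Delta_tilde} with a large but fixed $\kappa$, obtaining decay $O\bigl((\log\log(n))^{1-\kappa}\bigr)$, and claim this is $o(\log^{-1/6}(n))$ for $\kappa$ large enough. It is not: for every fixed $\kappa$ one has $\log^{1/6}(n)\,(\log\log(n))^{1-\kappa} \to \infty$, since any negative power of $\log\log(n)$ decays more slowly than any negative power of $\log(n)$. So the transfer step as written does not close. The repair is immediate and is what the paper does: do not pass through the intermediate threshold $\log(n)(\log\log(n))^{\kappa}$ at all, but apply Markov's inequality directly at the threshold $\epsilon\log^{4/3}(n)$ using the first-moment estimate $\ET{\Delta_n} = O(\log(n)\log\log(n))$ from \eqref{eq:classF_expectation_Delta_n}. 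This yields
\begin{align*}
\sigma_n\,\PT{\log \widetilde{Y}_n - \log \widetilde{O}_n \geq \epsilon \log^{4/3}(n)}
= O\left(\frac{\log^{1/6}(n)\,\log(n)\log\log(n)}{\epsilon\,\log^{4/3}(n)}\right)
= O\left(\frac{\log\log(n)}{\epsilon\,\log^{1/6}(n)}\right) \longrightarrow 0,
\end{align*}
and the same argument with $\ET{\log Y_n - \log\widetilde{Y}_n} = O(\log(n)\log\log(n))$ handles the second transfer (note that this last bound is a bound on the expectation, not an almost sure bound, so Markov is genuinely needed there too). With this substitution your argument matches the paper's proof.
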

To prove this, let us first show that $\mathcal{\widetilde{Y}}_n$ is indeed mod-Gaussian convergent in Lemma~\ref{lem:classF_mod-conv_Y_n}. Subsequently, we present in Lemma~\ref{lem:classF_local_limit_Y_n} that $\mathcal{\widetilde{Y}}_n$ satisfies the required local behavior. Finally, the result has to be transferred to $\mathcal{O}_n$.
\begin{lemma}\label{lem:classF_mod-conv_Y_n}
Under the assumptions of Theorem~\ref{thm:classF_local_limit}, the sequence $\mathcal{\widetilde{Y}}_n$ is mod-$\mathcal{N}(0,\sigma_n^2)$ convergent with $\sigma_n^2 = \frac{\vartheta}{3}\log^{1/3}(n)$ and limiting function given by $\psi(x)=e^{x^3\vartheta / 18}$. 
\end{lemma}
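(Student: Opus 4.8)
The plan is to verify mod-Gaussian convergence directly from the characteristic function estimate already at our disposal, namely Corollary~\ref{cor:mod_conv_log Yn}. Recall that mod-$\mathcal{N}(0,\sigma_n^2)$ convergence of the sequence $\mathcal{\widetilde{Y}}_n$ means that, locally uniformly in $x\in\R$,
\begin{align*}
e^{x^2\sigma_n^2/2}\,\ET{e^{ix\mathcal{\widetilde{Y}}_n}} \longrightarrow \psi(x).
\end{align*}
So the first step is to rewrite $\ET{e^{ix\mathcal{\widetilde{Y}}_n}}$ in terms of the quantity controlled by Corollary~\ref{cor:mod_conv_log Yn}. Since $\mathcal{\widetilde{Y}}_n = (\log\widetilde{Y}_n - \tfrac{\vartheta}{2}\log^2(n))/\log^{4/3}(n)$, setting $s = ix/\log^{1/3}(n)$ gives $s\,\log\widetilde{Y}_n/\log(n) = ix\,\log\widetilde{Y}_n/\log^{4/3}(n)$, and hence
\begin{align*}
\ET{e^{ix\mathcal{\widetilde{Y}}_n}} = e^{-ix\vartheta\log^2(n)/(2\log^{4/3}(n))}\,\ET{e^{s\,\log\widetilde{Y}_n/\log(n)}}\Big|_{s=ix/\log^{1/3}(n)}.
\end{align*}

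The second step is to substitute the asymptotic expansion from Corollary~\ref{cor:mod_conv_log Yn}. With $\log(b_n) = \log(n) - 2\log\log(n)$ and $s=ix\log^{-1/3}(n)$ small, I would Taylor-expand
\begin{align*}
\frac{e^s}{s} - \frac1s - 1 = \frac{s}{2} + \frac{s^2}{6} + \frac{s^3}{24} + O(s^4),
\end{align*}
so that $\log(b_n)\big(\tfrac{e^s}{s}-\tfrac1s-1\big) = \log(b_n)\big(\tfrac{s}{2}+\tfrac{s^2}{6}+\tfrac{s^3}{24}\big)+O(\log(n)s^4)$. The linear term $\tfrac{s}{2}\log(b_n)$ contributes $\tfrac{ix}{2\log^{1/3}(n)}(\log(n)-2\log\log(n)) = \tfrac{ix}{2}\log^{2/3}(n) + O\big(\tfrac{\log\log(n)}{\log^{1/3}(n)}\big)$, whereas the centering factor $e^{-ix\vartheta\log^{2/3}(n)/2}$ cancels the $\vartheta$-multiple of the dominant piece exactly; here I must be careful that the definition of $\mathcal{\widetilde{Y}}_n$ uses $\log^2(n)$ while the sum produces $\log^2(b_n)$, and track the resulting $O(\log\log(n)/\log^{1/3}(n)) \to 0$ discrepancy. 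The quadratic term gives $\tfrac{s^2}{6}\log(b_n) = -\tfrac{x^2}{6}\log^{1/3}(n)(1+o(1)) = -\tfrac{x^2\sigma_n^2}{2}(1+o(1))$ with $\sigma_n^2 = \tfrac{\vartheta}{3}\log^{1/3}(n)$ --- wait, one must check the constant: $\tfrac{s^2}{6}\log(n) = -\tfrac{x^2}{6}\log^{1/3}(n)$, and we need this to match $-x^2\sigma_n^2/2 = -x^2\vartheta\log^{1/3}(n)/6$, which forces the implicit normalization; I will double-check against the value of $\sigma_n^2$ asserted in the lemma and reconcile the role of $\vartheta$ (it enters through the centering, not through $\log(b_n)$ in Corollary~\ref{cor:mod_conv_log Yn}, so I should re-derive the exact prefactor rather than trust the heuristic). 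The cubic term gives $\tfrac{s^3}{24}\log(b_n) = \tfrac{(ix)^3}{24}\log^{1/3}(n)\log^{-1}(n)\log(b_n) = -\tfrac{ix^3}{24} + o(1)$ --- again I must verify whether the target exponent $x^3\vartheta/18$ is consistent, which will pin down exactly how the $\vartheta$ and the precise normalization of $\mathcal{\widetilde{Y}}_n$ interact, and may mean the correct statement carries $\vartheta$-dependent constants throughout.

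The third step is to confirm the error terms vanish: the $O(\log(n)s^4) = O(\log(n)\cdot\log^{-4/3}(n)) = O(\log^{-1/3}(n)) \to 0$, the $O(s/\log(n)) \to 0$, and the $O(n^{-1})$ factor is negligible; moreover all these bounds are uniform for $x$ in compact sets, which is exactly what locally uniform convergence requires. Assembling the pieces yields $e^{x^2\sigma_n^2/2}\ET{e^{ix\mathcal{\widetilde{Y}}_n}} \to \exp(-ix^3\,c)$ for the appropriate constant $c$, which (after the careful bookkeeping above) should come out as $e^{x^3\vartheta/18}$ up to the $i$'s; one then notes that the limiting function is continuous with value $1$ at $x=0$, completing the verification of the mod-Gaussian property. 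I expect the main obstacle to be \emph{precise constant-tracking}: reconciling the expansion coefficients $\tfrac12,\tfrac16,\tfrac1{24}$ of $(e^s-1-s)/s$ against the claimed $\sigma_n^2 = \tfrac{\vartheta}{3}\log^{1/3}(n)$ and $\psi(x) = e^{x^3\vartheta/18}$, while correctly propagating the $\log(b_n)$-versus-$\log(n)$ and $\log^2(b_n)$-versus-$\log^2(n)$ substitutions and the placement of $\vartheta$. Everything else --- the algebra of the substitution, the Taylor expansion, the error estimates --- is routine once Corollary~\ref{cor:mod_conv_log Yn} is in hand.
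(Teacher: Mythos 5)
Your route is the same as the paper's: the paper expands the exponent in Lemma~\ref{lem:E_tilde_Yn} term by term in $s$ (obtaining $\frac{s^k}{k!}\cdot\frac{\vartheta}{k+1}\log(n)$ for the $k$-th term, which is exactly your expansion $\frac{e^s-1-s}{s}=\sum_{k\geq1}\frac{s^k}{(k+1)!}$), substitutes $s=it/\log^{1/3}(n)$, and checks that all remainders are $o(1)$ uniformly for $t$ in compact sets. So the skeleton of your argument is fine. The problem is that you leave the two constants genuinely unresolved, and your proposed way out of the first one would fail: the factor $\vartheta$ cannot come from the centering, since subtracting $\frac{\vartheta}{2}\log^2(n)$ only shifts the \emph{linear} term in $t$ and cannot produce the $\vartheta$ needed in the variance term. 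It must come from inside the sum. Corollary~\ref{cor:mod_conv_log Yn} as printed has dropped a factor $\vartheta$; the correct statement, obtained from Lemma~\ref{lem:E_tilde_Yn} together with $\theta_m r^m=\vartheta+O(m^{-\delta})$ and Euler--Maclaurin, is
\begin{align*}
\ET{e^{s\frac{\log\widetilde{Y}_n}{\log(n)}}}
=\exp\left(\vartheta\log(b_n)\left(\frac{e^s}{s}-\frac{1}{s}-1\right)+O\left(\frac{s}{\log(n)}\right)\right)\left(1+O(n^{-1})\right).
\end{align*}
With this correction your quadratic term becomes $\frac{s^2}{6}\vartheta\log(b_n)=-\frac{x^2}{2}\cdot\frac{\vartheta}{3}\log^{1/3}(n)(1+o(1))=-\frac{x^2\sigma_n^2}{2}(1+o(1))$, as required. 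In short: do not work from the Corollary as stated; go back to Lemma~\ref{lem:E_tilde_Yn}, which is what the paper's own proof does.

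On the cubic term your arithmetic is right and the lemma's statement is not: $\frac{s^3}{24}\vartheta\log(b_n)$ evaluated at $s=ix/\log^{1/3}(n)$ tends to $\frac{(ix)^3\vartheta}{24}$, i.e.\ the limiting function is $\psi(z)=e^{z^3\vartheta/24}$ (equivalently, the third cumulant of $\log\widetilde{Y}_n/\log^{4/3}(n)$ tends to $\vartheta/4$). This agrees with the paper's own intermediate display, which carries $\frac{s^3}{3!}\cdot\frac{\vartheta}{4}\log(n)$ in the exponent, but not with the $e^{x^3\vartheta/18}$ asserted in the lemma; the $18$ would require a coefficient $\frac{\vartheta}{3}$ where $\sum_{m\leq b_n}\log^3(m)/m\sim\frac{1}{4}\log^4(b_n)$ forces $\frac{\vartheta}{4}$. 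The discrepancy is harmless for the local limit theorem (only the Gaussian factor and the boundedness of $\psi$ enter there), though it does affect the constant in Theorem~\ref{thm:classF_mod_large-dev1}. Your treatment of $\log(b_n)$ versus $\log(n)$, of the remainder terms, and of local uniformity is correct.
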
 
\begin{proof}
Take the generating function in Lemma~\ref{lem:E_tilde_Yn} and expand the exponential term to get
\begin{align*}
\ET{e^{s\frac{\log \widetilde{Y}_n}{\log n}}} 
& =
\exp\Big(s \frac{\vartheta}{2} \big(\log(n) + O(\log\log(n)\big) + 
\frac{s^2}{2} \frac{\vartheta}{3} \big(\log(n) + O(\log\log(n)\big) \\
& + \frac{s^3}{3!} \frac{\vartheta}{4} \big(\log(n) + O(\log\log(n)\big)  
+ O(s^4 \log(n))\Big)
\left(1+O(n^{-1})  \right);
\end{align*}
we used \eqref{eq:classF_add_assumption_sum1} and similar estimates for the higher order terms. Since $s \in \C$ we may write $s = it$ with $t \in \R$ and get
%
%
\begin{align*}
\ET{e^{it\frac{\log \widetilde{Y}_n}{\log^{4/3} n}}}
&=
\exp\Big( it \frac{\vartheta}{2} \log^{\frac{2}{3}}(n) - 
\frac{t^2}{2} \frac{\vartheta}{3} \log^{\frac{1}{3}}(n) 
+ \frac{it^3}{3!} \frac{\vartheta}{4}  + O\Big(\frac{\log\log(n)}{\log^{1/3}(n)}\Big)\Big)
\end{align*}
and this gives the result.
\end{proof}

 As a direct consequence, we get a local limit theorem for $\mathcal{\widetilde{Y}}_n$.  
\begin{lemma}\label{lem:classF_local_limit_Y_n}
Under the assumptions of Theorem~\ref{thm:classF_local_limit} the following holds for any bounded Borel subset $B \subset \R$ with boundary of Lebesgue measure zero:
\begin{align*}
 \lim_{n \rightarrow \infty} \sigma_n \, \PT{\mathcal{\widetilde{Y}}_n \in B} = \frac{m(B)}{\sqrt{2\pi}},
\end{align*}
where $m(B)$ denotes the Lebesgue measure of $B$  and $\sigma_n$ is defined as in Lemma~\ref{lem:classF_mod-conv_Y_n}.
\end{lemma}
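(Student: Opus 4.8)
The plan is to deduce Lemma~\ref{lem:classF_local_limit_Y_n} directly from Lemma~\ref{lem:classF_mod-conv_Y_n} by invoking the local-limit machinery of mod-Gaussian convergence, specifically Theorem~5 in \cite{DeKoNi11}. That theorem states, roughly, that if a sequence $X_n$ is mod-$\mathcal{N}(0,\sigma_n^2)$ convergent with $\sigma_n \to \infty$ and a continuous limiting function, then for every bounded Borel set $B$ with boundary of Lebesgue measure zero one has $\sigma_n \, \mathbb{P}[X_n \in B] \to m(B)/\sqrt{2\pi}$. So the proof is essentially a verification that the hypotheses of that theorem are met by $X_n = \mathcal{\widetilde{Y}}_n$.

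First I would recall from Lemma~\ref{lem:classF_mod-conv_Y_n} that $\mathcal{\widetilde{Y}}_n$ is mod-$\mathcal{N}(0,\sigma_n^2)$ convergent with $\sigma_n^2 = \frac{\vartheta}{3}\log^{1/3}(n) \to \infty$ and limiting function $\psi(x) = e^{x^3 \vartheta/18}$, which is continuous on $\mathbb{R}$ with $\psi(0)=1$. The one point that genuinely needs checking is the precise form of the convergence required by \cite[Theorem~5]{DeKoNi11}: that theorem is typically stated for convergence of $e^{t^2\sigma_n^2/2}\,\mathbb{E}[e^{it\mathcal{\widetilde{Y}}_n}]$ that is \emph{locally uniform} in $t$ on all of $\mathbb{R}$, and sometimes with a mild integrability/growth control. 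From the explicit computation in the proof of Lemma~\ref{lem:classF_mod-conv_Y_n}, namely
\begin{align*}
\ET{e^{it\mathcal{\widetilde{Y}}_n}}
= \exp\Big(-\tfrac{t^2}{2}\tfrac{\vartheta}{3}\log^{1/3}(n) + \tfrac{it^3}{3!}\tfrac{\vartheta}{4} + O\big(\tfrac{\log\log(n)}{\log^{1/3}(n)}\big)\Big),
\end{align*}
one reads off that $e^{t^2\sigma_n^2/2}\,\ET{e^{it\mathcal{\widetilde{Y}}_n}} \to e^{it^3\vartheta/24}$ uniformly for $t$ in compact subsets of $\mathbb{R}$, with an error term $O(\log\log(n)/\log^{1/3}(n))$ that is uniform in $t$ on such compacta. (Note this exhibits $\psi$ with the constant coming out as $e^{ix^3\vartheta/24}$; I would double-check consistency with the constant $x^3\vartheta/18$ stated in Lemma~\ref{lem:classF_mod-conv_Y_n} and reconcile the scaling conventions between $s$, $it$, $\log(n)$ and $\log^{4/3}(n)$ — the exact constant does not affect the local limit theorem since only continuity of $\psi$ and $\sigma_n\to\infty$ enter.) This is exactly the hypothesis needed, so Theorem~5 in \cite{DeKoNi11} applies verbatim and yields the claimed statement with the $\sigma_n$ of Lemma~\ref{lem:classF_mod-conv_Y_n}.

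The main obstacle is therefore not a deep one: it is ensuring that the version of the mod-Gaussian local limit theorem we cite has hypotheses that match what Lemma~\ref{lem:classF_mod-conv_Y_n} delivers (local uniformity on $\mathbb{R}$ versus on a neighbourhood of $0$, and whether any tail bound on the characteristic function is demanded). Since our characteristic-function estimate is an exact exponential identity valid for all $s \in \C$ up to a multiplicative $(1+O(n^{-1}))$ and an additive $O(\log\log(n)/\log^{1/3}(n))$ in the exponent — and in particular the real part of the exponent is $-\frac{t^2}{2}\sigma_n^2(1+o(1))$, giving genuine Gaussian-type decay for large $|t|$ as well — all standard forms of the hypothesis are satisfied. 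Hence the proof reduces to: quote Lemma~\ref{lem:classF_mod-conv_Y_n}, observe $\sigma_n\to\infty$ and $\psi$ continuous with $\psi(0)=1$, and apply \cite[Theorem~5]{DeKoNi11}.
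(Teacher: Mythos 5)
Your proposal is correct and follows essentially the same route as the paper: the paper also applies Theorem~5 of \cite{DeKoNi11} with $A_n t = \sigma_n t$, and the ``tail/integrability control'' you worry about is exactly the condition (H3) the paper verifies, namely uniform integrability of $t\mapsto \mathbb{E}_{\Theta}\bigl[e^{it\mathcal{\widetilde{Y}}_n/\sigma_n}\bigr]\one_{\{|t/\sigma_n|\leq k\}}$, which it gets from $\bigl|\mathbb{E}_{\Theta}\bigl[e^{i\bar{t}\mathcal{\widetilde{Y}}_n}\bigr]\bigr| = \exp(-t^2+o(1))$ just as you argue. Your side remark about the constant ($e^{ix^3\vartheta/24}$ versus the stated $e^{x^3\vartheta/18}$) is a fair observation and, as you note, immaterial for the local limit theorem.
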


\begin{proof}
Apply Theorem 5 in \cite{DeKoNi11}  with $\varphi(t)=e^{-t^2/2}$ and $A_nt=\sigma_nt$. We need to verify that  condition H3 holds, that is we have to show the uniform integrability of the sequence
\begin{align*}
f_{nk}\coloneqq\mathbb{E}_{\Theta}\big[e^{it\frac{\mathcal{\widetilde{Y}}_n}{\sigma_n}}\big] \one_{|t \sigma_n^{-1}| \leq k}
\end{align*}
for all $k \geq 0$. Set $\overline{t}\coloneqq t/\sigma_n$ and recall that Lemma~\ref{lem:classF_mod-conv_Y_n} implies
\begin{align*}
\mathbb{E}_{\Theta}\big[e^{i \overline{t} \mathcal{\widetilde{Y}}_n}\big] 
&= \exp \Big(- \overline{t}^2 \sigma_n^2 + \frac{\vartheta}{18} i\overline{t}^3 + O(\overline{t}^4 \frac{\log\log(n)}{ \log^{1/3}(n)})\Big)\\
&= \exp \Big(- t^2 + \frac{\vartheta}{18}\frac{ it^3}{\sigma_n^3} + O(t^4 \frac{\log\log(n)}{\sigma_n^4 \log^{1/3}(n)})\Big).
\end{align*}
Thus
\begin{align*}
 \big|\mathbb{E}_{\Theta}\big[e^{i \overline{t} \mathcal{\widetilde{Y}}_n}\big] \big|
= \exp \big(- t^2 + o(1)\big)
\end{align*}
which implies the uniform integrability.
\end{proof}

  \begin{proof}[Proof of Theorem~\ref{thm:classF_local_limit}]
 It remains to transfer the result from $\mathcal{\widetilde{Y}}_n$ to
  $$\mathcal{\widetilde{O}}_n \coloneqq \frac{\log \widetilde{O}_n - \frac{\vartheta}{2}\log^2(n)}{\log^{4/3}(n)}$$
and subsequently to $\mathcal{O}_n$ defined as in Theorem~\ref{thm:classF_local_limit}. To this aim, notice that for every $\epsilon > 0$ there exist Jordan-measurable sets (meaning that they are bounded with boundary of Lebesgue measure zero) $B_{\epsilon} \subset B \subset B^{\epsilon}$ such that
 $$m(B^{\epsilon} \setminus B) \leq \epsilon 
 \quad \text{ and } \quad
 m(B \setminus B_{\epsilon}) \leq \epsilon .$$
 To see this, notice that $\partial B$ is bounded (since $B$ is bounded) and that it is also closed (complement of the interior and the exterior, both open sets), thus $\partial B$ is compact. Cover $\partial B$ with open rectangles whose total volume does not exceed $\epsilon$. Since $\partial B$ is compact, $U$ can be chosen to be a finite union of open rectangles. Then define
  $$B_{\epsilon} \coloneqq B \setminus U
 \quad \text{ and } \quad
 B^{\epsilon}\coloneqq B \cup U $$
 to get the required sets (they are indeed Jordan-measurable since $\partial (B \setminus U) \subset \partial B \cup \partial U$ and $\partial (B \cup U) \subset \partial B \cup \partial U$). This gives
\begin{align*}
\PT{\mathcal{\widetilde{O}}_n \in B} 
\leq \PT{\mathcal{\widetilde{Y}}_n \in B^{\epsilon}} 
+ O \Big( \PT{ \log \widetilde{Y}_n - \log \widetilde{O}_n \geq \epsilon \log^{4/3}(n) } \Big)
\end{align*} 
 and 
 \begin{align*}
\PT{\mathcal{\widetilde{O}}_n \in B} 
\geq \PT{\mathcal{\widetilde{Y}}_n \in B^{\epsilon}} 
+ O \Big( \PT{ \log \widetilde{Y}_n - \log \widetilde{O}_n \geq \epsilon \log^{4/3}(n) } \Big).
\end{align*} 
Thus, we have to show 
 \begin{align}\label{eq:classF_local_limit_theorem_tildeY_n_zu_Yn}
\sigma_n\PT{ \log \widetilde{Y}_n - \log \widetilde{O}_n \geq \epsilon \log^{4/3}(n) }\rightarrow 0.
\end{align} 
This is true since 
\begin{align*}
\PT{ \log \widetilde{Y}_n - \log \widetilde{O}_n \geq \epsilon \log^{4/3}(n) }
\leq \PT{ \log Y_n - \log O_n \geq \epsilon \log^{4/3}(n) }
\end{align*} 
and then \eqref{eq:classF_expectation_Delta_n} and Markov's inequality yield the required asymptotic. Now \eqref{eq:classF_local_limit_theorem_tildeY_n_zu_Yn} implies
\begin{align*}
\lim_{n \rightarrow \infty} \sigma_n \PT{\mathcal{\widetilde{O}}_n \in B}  
\leq \lim_{n \rightarrow \infty} \sigma_n \PT{\mathcal{\widetilde{Y}}_n \in B^{\epsilon}} 
=  \frac{m(B^{\epsilon})}{\sqrt{2\pi}}
\leq \frac{m(B)+\epsilon}{\sqrt{2\pi}}.
\end{align*}
With the same argument for the reversed inequality, we get that for all $\epsilon >0$, 
\begin{align*}
\frac{m(B)-\epsilon}{\sqrt{2\pi}}
\leq \lim_{n \rightarrow \infty} \sigma_n \PT{\mathcal{\widetilde{O}}_n \in B}  
\leq \frac{m(B)+\epsilon}{\sqrt{2\pi}}.
\end{align*}
Let $\epsilon$ tend to zero to obtain 
\begin{align*}
 \lim_{n \rightarrow \infty} \sigma_n \, \PT{\mathcal{\widetilde{O}}_n \in B} 
 = \frac{m(B)}{\sqrt{2\pi}}.
\end{align*}
With the same argument, the result is transferred from $\mathcal{\widetilde{O}}_n$ to $\mathcal{O}_n$, assuming that
 \begin{align*}
\sigma_n\PT{ \log O_n - \log \widetilde{O}_n \geq \epsilon \log^{4/3}(n) }\rightarrow 0
\end{align*} 
is satisfied. To see this, notice that
 \begin{align*}
\PT{ \log O_n - \log \widetilde{O}_n \geq \epsilon \log^{4/3}(n) } 
\leq \PT{ \log Y_n - \log \widetilde{Y}_n \geq \epsilon \log^{4/3}(n) }
\end{align*}
holds as well as
 \begin{align*}
\ET{ \log Y_n - \log \widetilde{Y}_n } = O\big(\log(n)\log\log(n)\big).
\end{align*} 
\end{proof}

\vskip 15pt 
\subsection{Large deviations estimates for $\log O_n$}\label{subsection:classF_large_dev}

This section is devoted to two large deviations estimates for $\log O_n$. To our knowledge, these results are new even for the uniform measure. The first estimate is established by a classical large deviations approach. We will show in Theorem~\ref{thm:classF_large_dev_logO_n} that for any Borel set $B$
\begin{align}\label{eq:classF_large_dev_classical}
\limsup_{n \rightarrow \infty} \frac{1}{\log(n)} \log\mathbb{P}_{\Theta}\bigg(\frac{\log O_n}{\log^{2}(n)} \in B\bigg) 
= -\inf_{x\in B} F(x)
\end{align}
where
\begin{align*}
F(x) \coloneqq \sup_{t \in \R}[t x - \chi(t)]
\end{align*}
is the so-called Fenchel-Legendre transform of $\chi(t)\coloneqq \frac{e^t -1-t}{t}$.
This result was stated by O'Connell \cite{OCo96} for the uniform measure. However, we believe his proof of Lemma 2 is incorrect and we don't see an easy way to fix it.
Here, we give a detailed proof based on an extra moment condition and even present a refined result, namely a precise large deviations estimate; see Theorem~\ref{thm:classF_mod_large-dev1}. 

\textbf{Moment condition} 
Assume that $g_\Theta$ belongs to $\mathcal{F}(\rho,\vartheta, K)$ and assume  $\theta_m r^m =\vartheta +\mathcal{O}(m^{-\delta})$ for some $\delta >0$. Define
\begin{align*}
\Delta_{n,\beta(n)} \coloneqq \sum_{k = \beta(n)}^n \Lambda(k)(\widetilde{D}_{nk}-\widetilde{D}_{nk}^*),
\end{align*}
where $\beta(n)=\exp(\log^x(n))$ for some $x<1$. Then the moment condition is satisfied if 
there exists an $n_0\in\N$ and a sequence $(k_m)_{m\in\N}$ 
such that for all $n\geq n_0$ the following holds:
\begin{align}\label{eq:classF_large_dev_moments}
\left|\ET{(\Delta_{n,\beta(n)})^m}\right| \leq k_m(\log(n)\log\log(n))^m\big)
\end{align}
with $k_m =O(e^{\alpha \, m})$ for some $\alpha>0$ with $\alpha$ independent of $n$ and $m$.
%
%
%
%
\begin{remark}
We are strongly convinced that the moment condition is satisfied under the above assumptions, however we are so far not able to prove it. 
The condition is clearly satisfied for $m=1$ and for $m=2$ 
 and the computations for these cases can be found for instance in the Appendix in \cite{St15}. Furthermore, we have been able to show that 
 \begin{align*}
\ET{(\Delta_{n,\beta(n)})^m} = O_m\big((\log(n)\log\log(n))^m\big),
\end{align*}
but we couldn't very the upper bound for $k_m$. However, this computations are very technical and we thus don't state them here.
\end{remark}
With the moment generating function of $\log \widetilde{Y}_n / \log(n)$ stated in Corollary~\ref{cor:mod_conv_log Yn} at hand, a simple application of the G\"artner-Ellis Theorem yields an estimate as in (\ref{eq:classF_large_dev_classical}) for $\log \widetilde{Y}_n$. Then, using the moment condition \eqref{eq:classF_large_dev_moments}, we show by exponential equivalence that this estimate can be transferred to $\log \widetilde{O}_n$ and then to $\log O_n$. More precisely, we will prove the following

\begin{theorem}\label{thm:classF_large_dev_logO_n}
Let $g_\Theta$ belong to $\mathcal{F}(r,\vartheta, K)$, $\theta_m r^m =\vartheta +O(m^{-\delta})$ for some $\delta >0$ and assume that the moment condition \eqref{eq:classF_large_dev_moments} holds. Then the sequence $\log O_n/ \log^2(n)$ satisfies a large deviations principle with rate $\log(n)$ and rate function given by the Fenchel-Legendre transform of $\chi(t)\coloneqq \frac{e^t -1-t}{t}$.
\end{theorem}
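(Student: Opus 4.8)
The plan is to prove the large deviations principle (LDP) in three transfers: first for $\log \widetilde{Y}_n / \log^2(n)$, then for $\log \widetilde{O}_n / \log^2(n)$, and finally for $\log O_n / \log^2(n)$. The first step is a direct application of the G\"artner--Ellis theorem. From Corollary~\ref{cor:mod_conv_log Yn} we have, for every real $s$,
\begin{align*}
\frac{1}{\log(n)} \log \ET{e^{s\frac{\log \widetilde{Y}_n}{\log(n)}}}
= \frac{\log(b_n)}{\log(n)} \left(\frac{e^s}{s}-\frac{1}{s}-1\right) + O\left(\frac{1}{\log^2(n)}\right)
\longrightarrow \frac{e^s-1-s}{s} = \chi(s),
\end{align*}
using $\log(b_n)/\log(n) = 1 - 2\log\log(n)/\log(n) \to 1$. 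Hence the scaled cumulant generating function of $\log \widetilde{Y}_n / \log^2(n)$ at speed $\log(n)$ is the finite, differentiable, steep function $\chi$, so G\"artner--Ellis gives that $\log \widetilde{Y}_n / \log^2(n)$ satisfies an LDP at speed $\log(n)$ with good rate function $F = \chi^*$, the Fenchel--Legendre transform. (One checks $\chi$ is essentially smooth and lower semicontinuous on $\R$, with $\chi(s)\to\infty$ as $s\to\infty$ sufficiently fast and $\chi(s)\to -1$ as $s\to -\infty$; the mild non-steepness at $-\infty$ only affects the rate function on a half-line and does not obstruct the upper/lower bounds, which is the standard situation treated in Dembo--Zeitouni.)

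The second step transfers the LDP from $\log \widetilde{Y}_n$ to $\log \widetilde{O}_n$ via exponential equivalence at speed $\log(n)$. Here is where the moment condition~\eqref{eq:classF_large_dev_moments} enters. Recall $0 \leq \log \widetilde{Y}_n - \log \widetilde{O}_n = \widetilde{\Delta}_n$, and split it as $\widetilde{\Delta}_n = \sum_{k<\beta(n)}\Lambda(k)(\widetilde{D}_{nk}-\widetilde{D}^*_{nk}) + \Delta_{n,\beta(n)}$. For the low-$k$ part, $\sum_{k<\beta(n)}\Lambda(k)(\widetilde{D}_{nk}-\widetilde{D}^*_{nk}) \leq \psi(\beta(n)) \cdot \widetilde{D}_{n,\max}$ — more carefully, one bounds it deterministically (or in expectation using Proposition~\ref{prop:bound_Dnk}) by something of order $o(\log^2(n))$ with overwhelming probability, since $\beta(n) = \exp(\log^x(n))$ with $x<1$ makes $\psi(\beta(n)) = O(\exp(\log^x(n))) = n^{o(1)}$, which is negligible on the scale $\log^2(n)$. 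For the tail part $\Delta_{n,\beta(n)}$, the moment condition gives $\ET{(\Delta_{n,\beta(n)})^m} \leq k_m (\log(n)\log\log(n))^m$ with $k_m = O(e^{\alpha m})$; by Markov's inequality with $m = m(n) \to\infty$ chosen appropriately (e.g. $m \asymp \log(n)/\log\log(n)$ up to constants), one gets
\begin{align*}
\PT{\Delta_{n,\beta(n)} \geq \epsilon \log^2(n)}
\leq \frac{k_m (\log(n)\log\log(n))^m}{(\epsilon \log^2(n))^m}
\leq \left(\frac{C e^{\alpha}\log\log(n)}{\epsilon \log(n)}\right)^m,
\end{align*}
and for the right choice of $m(n)$ this is $\leq \exp(-c \log(n)\log\log(n)/\log\log(n)) \cdot(\ldots)$, i.e. it decays faster than $\exp(-c'\log(n))$ for every $c'$; the point is that $\log\log(n)/\log(n)\to 0$ beats the $e^\alpha$ factor. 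Thus $\frac{1}{\log(n)}\log\PT{\widetilde{\Delta}_n \geq \epsilon\log^2(n)} \to -\infty$ for every $\epsilon>0$, which is exactly exponential equivalence of $\log\widetilde{Y}_n/\log^2(n)$ and $\log\widetilde{O}_n/\log^2(n)$ at speed $\log(n)$. By the exponential-equivalence transfer lemma (Dembo--Zeitouni, Theorem 4.2.13), $\log\widetilde{O}_n/\log^2(n)$ satisfies the same LDP with rate function $F$.

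The third step transfers from $\widetilde{O}_n$ to $O_n$. We have $0 \leq \log O_n - \log \widetilde{O}_n \leq \log Y_n - \log \widetilde{Y}_n = \sum_{m=b_n+1}^n \log(m) C_m$, and from the proof of Theorem~\ref{thm:classF_local_limit} (or directly from Proposition~\ref{prop:bound_Dnk}-type estimates) $\ET{\log Y_n - \log\widetilde{Y}_n} = O(\log(n)\log\log(n))$; moreover one can bound its higher moments similarly (this truncation remainder is a sum over large cycles and is far easier to control than $\Delta_{n,\beta(n)}$, since it does not involve the $D^*$ minimum — indeed a crude first-moment Markov bound $\PT{\log Y_n - \log\widetilde{Y}_n \geq \epsilon\log^2(n)} = O(\log\log(n)/\log(n)) \to 0$ combined with the same $m$-th moment trick as above, available unconditionally here, gives superpolynomial decay). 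Hence $\log O_n/\log^2(n)$ is exponentially equivalent to $\log\widetilde{O}_n/\log^2(n)$ at speed $\log(n)$, and a final application of the transfer lemma yields the LDP for $\log O_n/\log^2(n)$ with rate $\log(n)$ and rate function $F = \chi^*$. This establishes the theorem, and incidentally \eqref{eq:classF_large_dev_classical} is the $\limsup$ form of the upper bound contained in the LDP.

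\textbf{Main obstacle.} The analytic heart — the G\"artner--Ellis input — is essentially free given Corollary~\ref{cor:mod_conv_log Yn}. The real work, and the step I expect to be delicate, is the exponential equivalence in step two: one must verify that the contribution of primes and prime powers below $\beta(n)$ is genuinely negligible on the $\log^2(n)$ scale (this forces the choice $\beta(n) = \exp(\log^x(n))$, $x<1$) \emph{and} that the moment condition \eqref{eq:classF_large_dev_moments}, with its crucial $k_m = O(e^{\alpha m})$ growth, is exactly strong enough — via an optimized Markov bound with $m = m(n)\to\infty$ — to push the tail probability below every exponential rate $e^{-c\log(n)}$. The borderline nature of the $\log\log(n)/\log(n)$ gain against the $e^\alpha$ constant is why the condition is phrased the way it is, and why the authors flag that they cannot prove it unconditionally; the proof of the theorem itself, however, only needs to feed this hypothesis into the standard exponential-equivalence machinery.
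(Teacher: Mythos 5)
Your overall architecture --- G\"artner--Ellis for $\log\widetilde{Y}_n/\log^2(n)$ via Corollary~\ref{cor:mod_conv_log Yn}, then two exponential-equivalence transfers, with $\widetilde{\Delta}_n$ split at $\beta(n)$ and the moment condition \eqref{eq:classF_large_dev_moments} handling the tail $\Delta_{n,\beta(n)}$ --- is exactly the paper's (the transfers are the content of Lemmas~\ref{lem:classF_exp_equiv_logYn_logOn} and \ref{lem:classF_exp_equiv_log_tildeOn_logOn}). Your treatment of the tail is a legitimate variant: where you apply Markov to the $m$-th moment with $m=m(n)\to\infty$, the paper uses a Chernoff bound with $s=\log\log(n)/\log(n)$ combined with an event-splitting and Cauchy--Schwarz argument that feeds the moment bounds into the exponential series; both exploit $k_m=O(e^{\alpha m})$ in the same way. (Minor quantitative point: $m\asymp\log(n)/\log\log(n)$ only yields $e^{-C\log(n)}$ for a fixed $C$; you need $m\gg\log(n)/\log\log(n)$, e.g.\ $m=\lceil\log(n)\rceil$, to get $\frac{1}{\log(n)}\log\PT{\Delta_{n,\beta(n)}\ge\epsilon\log^2(n)}\le -\log\log(n)(1+o(1))\to-\infty$.)

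The genuine gap is the low-$k$ part $\sum_{k\le\beta(n)}\Lambda(k)(\widetilde{D}_{nk}-\widetilde{D}^*_{nk})$. Your deterministic bound fails outright: $\psi(\beta(n))\asymp\beta(n)=\exp(\log^x(n))$ is vastly \emph{larger} than $\log^2(n)$ (its logarithm is $\log^x(n)\gg\log\log(n)$), so the claim that ``$n^{o(1)}$ is negligible on the scale $\log^2(n)$'' is false. Your fallback, a first-moment Markov bound via Proposition~\ref{prop:bound_Dnk}, gives only $\PT{\cdot\ge\epsilon\log^2(n)}=O(\log\log(n)/\log(n))$, i.e.\ polynomial decay in $\log(n)$, whereas exponential equivalence at speed $\log(n)$ demands decay faster than $e^{-c'\log(n)}$ for every $c'$. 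What is actually needed (and what the paper does in \eqref{eq:classF_LDP_transfer_1}) is to dominate the low-$k$ contribution by $\sum_{m\le\beta(n)}\log(m)\,C_m$ and run a Chernoff bound with $s=\log\log(n)/\log(n)$, using the moment generating function of this truncated sum computed as in Lemma~\ref{lem:E_tilde_Yn}; the constraint $s\log\beta(n)=o(1)$ is the real reason for the choice $\beta(n)=\exp(\log^x(n))$ with $x<1$, not the size of $\psi(\beta(n))$. The same weakness affects your third transfer: the ``crude first-moment Markov bound'' $O(\log\log(n)/\log(n))$ is again far too weak, and the clean unconditional fix is the paper's Chernoff bound on the number of long cycles $T(b_n,n)$ with $s=\log\log\log(n)$, as in Lemma~\ref{lem:classF_exp_equiv_log_tildeOn_logOn}.
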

\begin{proof}
Let us first check that $\log \widetilde{Y}_n/\log^2(n)$ satisfies the required large deviations principle. By the G\"artner-Ellis Theorem, it suffices to check
\begin{align*}
\lim_{n \rightarrow \infty} \frac{1}{\log(n)} \log\ET{\exp\Big(t \frac{\log \widetilde{Y}_n}{\log(n)}\Big)} = \chi(t)
 \end{align*}
and this follows immediately from Corollary~\ref{cor:mod_conv_log Yn}. 
Proving exponential equivalence, Lemma~\ref{lem:classF_exp_equiv_classical} transfers this result from $\log \widetilde{Y}_n$ to $\log \widetilde{O}_n$ and then to $\log O_n$.
\end{proof}

\begin{lemma}\label{lem:classF_exp_equiv_classical}
Under the assumptions of Theorem~\ref{thm:classF_large_dev_logO_n} the following holds for any $c>0$:
\begin{enumerate}
\item[\textup{(1)}] 
$
\limsup_{n \rightarrow \infty} \frac{1}{\log(n)} 
\log \PT{\log \widetilde{Y}_n - \log \widetilde{O}_n >c \log^2(n)} = -\infty, \\
$
\item[\textup{(2)}] 
$
\limsup_{n \rightarrow \infty} \frac{1}{\log(n)} 
\log \PT{\log O_n - \log \widetilde{O}_n >c \log^2(n)} = -\infty.
$
\end{enumerate}
\end{lemma}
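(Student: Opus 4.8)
The strategy is the standard large-deviations device of proving that two sequences are \emph{superexponentially close} at the relevant speed $\log(n)$, which then lets one transfer the large deviations principle through the contraction/exponential-equivalence machinery as used in Theorem~\ref{thm:classF_large_dev_logO_n}. For part $(1)$, the quantity $\log \widetilde{Y}_n - \log \widetilde{O}_n = \widetilde{\Delta}_n$ is nonnegative, so by Markov's inequality applied to the $m$-th moment,
\begin{align*}
\PT{\widetilde{\Delta}_n > c \log^2(n)}
\leq \frac{\ET{\widetilde{\Delta}_n^{\,m}}}{c^m \log^{2m}(n)}.
\end{align*}
The plan is to split $\widetilde{\Delta}_n = \Delta_{n,\beta(n)} + \sum_{k<\beta(n)} \Lambda(k)(\widetilde{D}_{nk}-\widetilde{D}_{nk}^*)$, bound the low-frequency part $\sum_{k<\beta(n)}\Lambda(k)(\widetilde{D}_{nk}-\widetilde{D}_{nk}^*) \leq \psi(\beta(n)) = O(\beta(n)) = O(\exp(\log^x(n)))$ deterministically via the Chebyshev function \eqref{eq:intro_Chebychev}--\eqref{eq:Chebychev_asymp}, and then invoke the moment condition \eqref{eq:classF_large_dev_moments} for the high-frequency part $\Delta_{n,\beta(n)}$. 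After a binomial expansion and using $(a+b)^m \leq 2^m(a^m+b^m)$, the moment $\ET{\widetilde{\Delta}_n^{\,m}}$ is bounded by something of the form $2^m k_m (\log(n)\log\log(n))^m + 2^m(\exp(\log^x(n)))^m$, and since $k_m = O(e^{\alpha m})$, choosing $m = m(n) \to \infty$ slowly (e.g.\ $m = \lfloor\log\log(n)\rfloor$) makes
\begin{align*}
\frac{1}{\log(n)}\log\PT{\widetilde{\Delta}_n > c\log^2(n)}
\leq \frac{1}{\log(n)}\Big( m\log\frac{2e^{\alpha}\log\log(n)}{c\log(n)} + O(m\log^{x}(n))\Big) \to -\infty,
\end{align*}
because each term in the bracket, divided by $\log(n)$, tends to $-\infty$ (the first since $\log\log(n)/\log(n)\to 0$ forces the logarithm negative and unbounded once multiplied by $m\to\infty$; the second since $x<1$ so $m\log^x(n)/\log(n)\to 0$, wait --- one must be careful here and rather take $m$ growing but with $m\log^x(n) = o(\log(n))$, which is possible precisely because $x<1$). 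This yields $(1)$.

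For part $(2)$, write $\log O_n - \log \widetilde{O}_n \leq \log Y_n - \log\widetilde{Y}_n = \sum_{m=b_n+1}^n \log(m)\,C_m$, exactly as in the displayed inequality preceding \eqref{eq:classF_Delta_tilde} in Section~\ref{subsection:classF_truncated_order}. Thus it suffices to show $\frac{1}{\log(n)}\log\PT{\sum_{m>b_n}\log(m)C_m > c\log^2(n)} \to -\infty$. Here I would again use Markov on high moments: by Lemma~2.5 of \cite{StZe14a} (or the cycle-index computations underlying Lemma~\ref{lem:classF_generating_E_log_Y_tilde}), one controls $\ET{\big(\sum_{m>b_n}\log(m)C_m\big)^m}$ via the generating function $\exp(g_\Theta(t) + \sum_{m>b_n}(\cdots)\tfrac{\theta_m}{m}t^m)$, and since only cycles of length $> b_n = n/\log^2(n)$ contribute, the total expected contribution is $O(\log(n)\log\log(n))$ with the moments growing at most like $m! \cdot (\log(n)\log\log(n))^m$ by a similar (but easier, since no $D_{nk}^*$ truncation is involved) argument; a Bernstein/Bennett-type bound or the same slowly-growing-$m$ trick then gives the $-\infty$ limit. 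Alternatively, and perhaps more cleanly, one observes that $\sum_{m>b_n}\log(m)C_m$ is supported on at most $\log^2(n)$ cycles of length $\leq n$ so it is bounded above by $\log^2(n)\cdot\log(n)\cdot\max_m C_m$, but controlling $\max_m C_m$ exponentially is itself some work, so I would favor the moment route.

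The main obstacle is part $(1)$: it is the only place where the unproven moment condition \eqref{eq:classF_large_dev_moments} is genuinely used, and the delicate point is the interplay between the truncation level $\beta(n) = \exp(\log^x(n))$, the growth rate of $m = m(n)$, and the bound $k_m = O(e^{\alpha m})$ --- one must check that $m(n)$ can be chosen to simultaneously (a) tend to infinity fast enough that $m\log\big(\log\log(n)/\log(n)\big) \to -\infty$ after division by $\log(n)$, and (b) grow slowly enough that $m\log^x(n) = o(\log(n))$, i.e.\ $m = o(\log^{1-x}(n))$, and (c) keep $e^{\alpha m}$ negligible against $\log^m(n)$, i.e.\ $\alpha m = o(m\log\log(n))$ which is automatic. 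The window $m(n)\to\infty$ with $m(n) = o(\log^{1-x}(n))$ is nonempty since $x<1$, so the argument closes; the routine-but-careful bookkeeping of these three constraints is where the real work lies. Part $(2)$ is comparatively soft, relying only on the moment estimates for $\sum_{m>b_n}\log(m)C_m$ that follow directly from the cycle-index formula, with no reliance on \eqref{eq:classF_large_dev_moments}.
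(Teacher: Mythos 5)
Your decomposition of $\widetilde{\Delta}_n$ at the cutoff $\beta(n)=\exp(\log^x(n))$ is the right starting point (it is the paper's as well), and your reduction in part $(2)$ to $\log Y_n-\log\widetilde{Y}_n=\sum_{m>b_n}\log(m)C_m$ matches the paper. However, part $(1)$ as outlined does not close, for three concrete reasons. First, the deterministic bound $\sum_{k<\beta(n)}\Lambda(k)(\widetilde{D}_{nk}-\widetilde{D}_{nk}^*)\leq\psi(\beta(n))$ is false: $\widetilde{D}_{nk}-\widetilde{D}_{nk}^*$ is not bounded by $1$ (a permutation with $\lfloor n/2\rfloor$ cycles of length $2$ has $D_{n2}-D_{n2}^*$ of order $n$), so the low-frequency part admits no useful worst-case bound and must be controlled \emph{probabilistically}. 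Second, even granting such a bound, $\exp(\log^x(n))\gg c\log^2(n)$, so the contribution of that term to your $m$-th moment Markov bound is at least $\bigl(\exp(\log^x(n))/(c\log^2(n))\bigr)^m\to\infty$; the inequality becomes vacuous. Third, your constraint analysis for $m(n)$ is inconsistent: the term coming from the moment condition contributes $-m\log\log(n)/\log(n)$ after taking logarithms and dividing by $\log(n)$, which tends to $-\infty$ only if $m\gg\log(n)/\log\log(n)$; this is incompatible with your requirement $m=o(\log^{1-x}(n))$ for every $x\in(0,1)$, since $\log^{1-x}(n)\ll\log(n)/\log\log(n)$. With your suggested choice $m=\lfloor\log\log(n)\rfloor$ the bound tends to $0$, not $-\infty$. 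So the "window" you claim is nonempty is in fact empty, and the moment route on $\widetilde{\Delta}_n$ as a whole cannot work in this form.

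For comparison, the paper proves this lemma by deferring to the stronger Lemmas~\ref{lem:classF_exp_equiv_logYn_logOn} and~\ref{lem:classF_exp_equiv_log_tildeOn_logOn}. There the low-frequency part is bounded by $\sum_{k\leq\beta(n)}\Lambda(k)\widetilde{D}_{nk}\leq\sum_{m}\log(m)C_m$ (using $\sum_{k\mid m}\Lambda(k)=\log(m)$) and then controlled through its moment generating function with $s=\log\log(n)/\log(n)$, which yields a bound of the form $-\tfrac{c}{2}\log\log(n)+o(1)$ at the relevant speed; the high-frequency part $\Delta_{n,\beta(n)}$ is handled via $\ET{e^{s\Delta_{n,\beta(n)}}}$, split on the event $A=\{\Delta_{n,\beta(n)}\leq\log^{4/3}(n)/\log\log(n)\}$, with Cauchy--Schwarz and the moment condition \eqref{eq:classF_large_dev_moments} used only to control the tail of the exponential series. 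Your part $(2)$ is essentially sound in spirit but underspecified: the paper's cleaner route is to dominate $\sum_{m>b_n}\log(m)C_m$ by $\log(n)\,T(b_n,n)$ with $T(b_n,n)=\sum_{m>b_n}C_m$, use the known generating function $\log\ET{e^{sT(b_n,n)}}=O(e^s\log\log(n))$, and apply Markov with $s=\log\log\log(n)$; your proposed moment bounds on $\sum_{m>b_n}\log(m)C_m$ would require separate justification and the same "slowly growing $m$" pitfall would reappear.
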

\begin{proof}
We will prove stronger versions of $(1)$ and $(2)$ in Lemma~\ref{lem:classF_exp_equiv_logYn_logOn} and Lemma~\ref{lem:classF_exp_equiv_log_tildeOn_logOn} below.
\end{proof}

The result of Theorem~\ref{thm:classF_large_dev_logO_n} can be even refined:
\begin{theorem}\label{thm:classF_mod_large-dev1}
Let $\mathcal{O}_n$ be as in \eqref{eq:classF_local_limit_mathcal_O} and $\sigma_n^2 = \frac{\vartheta}{3}\log^{1/3}(n)$. Then, under the assumptions of Theorem~\ref{thm:classF_large_dev_logO_n}, for any $x>0$ the following holds:
\begin{align*}
 \PT{\mathcal{O}_n \geq x \, \sigma_n^2} = \frac{\exp(-\sigma_n^2 \frac{x^2}{2} + \frac{x^3\vartheta}{18})}{\sqrt{2\pi \sigma_n^2 x^2}} \,  (1+o(1)).
\end{align*}
\end{theorem}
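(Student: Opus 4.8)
The plan is to establish the estimate first for the truncated variable $\widetilde{\mathcal{Y}}_n \coloneqq (\log\widetilde{Y}_n - \tfrac{\vartheta}{2}\log^2 n)/\log^{4/3}n$, whose moment generating function is available in closed form, and then to transfer it successively from $\widetilde{\mathcal{Y}}_n$ to $\widetilde{\mathcal{O}}_n$ and then to $\mathcal{O}_n$, exactly as for the local limit theorem (Theorem~\ref{thm:classF_local_limit}) and for Theorem~\ref{thm:classF_large_dev_logO_n}, but now tracking the precise constants. For the first step I would substitute $s\mapsto s/\log^{1/3}n$ in Corollary~\ref{cor:mod_conv_log Yn} and recenter by the deterministic quantity $\tfrac{\vartheta}{2}\log^{2/3}n$, which gives, uniformly for $s$ in any fixed compact subset of $\C$,
\begin{align*}
\ET{e^{s\widetilde{\mathcal{Y}}_n}} = e^{\sigma_n^2 s^2/2}\,\psi(s)\,\bigl(1+o(1)\bigr),\qquad \sigma_n^2 = \tfrac{\vartheta}{3}\log^{1/3}n,
\end{align*}
with $\psi$ the entire mod-Gaussian limiting function of Lemma~\ref{lem:classF_mod-conv_Y_n}. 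This is precisely the hypothesis of the general precise-deviations theorem for mod-Gaussian sequences with scale $\sigma_n^2\to\infty$ (see \cite{FeMeNi13}, and also \cite{DeKoNi11}), which then yields, for every fixed $x>0$ and locally uniformly in $x$,
\begin{align*}
\PT{\widetilde{\mathcal{Y}}_n \ge x\,\sigma_n^2} = \frac{\psi(x)}{x\sqrt{2\pi\sigma_n^2}}\,e^{-\sigma_n^2 x^2/2}\,\bigl(1+o(1)\bigr).
\end{align*}
Concretely this comes from tilting $\mathbb{P}_\Theta$ by $e^{x\widetilde{\mathcal{Y}}_n}$ (the correct tilt, since $\widetilde{\mathcal{Y}}_n$ has asymptotic variance $\sigma_n^2$ and we target the value $x\sigma_n^2$): under the tilted law $\widetilde{\mathcal{Y}}_n - x\sigma_n^2$ is again mod-Gaussian with variance $\sigma_n^2$ and residue $\psi(\cdot+x)/\psi(x)$, hence still satisfies the local limit theorem of Lemma~\ref{lem:classF_local_limit_Y_n}, and evaluating the resulting Gaussian integral produces the prefactor $\psi(x)$ as the value of the residue at the saddle $s=x$.

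For the transfer set $\widetilde{\mathcal{O}}_n \coloneqq (\log\widetilde{O}_n - \tfrac{\vartheta}{2}\log^2 n)/\log^{4/3}n$ and $\widetilde{\Delta}_n \coloneqq \log\widetilde{Y}_n - \log\widetilde{O}_n \ge 0$. The inequality $\widetilde{\mathcal{O}}_n \le \widetilde{\mathcal{Y}}_n$ gives the upper bound for free, while from $\widetilde{\mathcal{O}}_n \ge \widetilde{\mathcal{Y}}_n - \widetilde{\Delta}_n/\log^{4/3}n$ one gets
\begin{align*}
\PT{\widetilde{\mathcal{O}}_n \ge x\sigma_n^2}\ge\PT{\widetilde{\mathcal{Y}}_n \ge (x+\epsilon_n)\sigma_n^2}-\PT{\widetilde{\Delta}_n > \epsilon_n\sigma_n^2\log^{4/3}n},
\end{align*}
with $\epsilon_n \to 0$ chosen so slowly that $\sigma_n^2 x\epsilon_n \to 0$; by the local uniformity of the first step the first term is then $\sim \PT{\widetilde{\mathcal{Y}}_n \ge x\sigma_n^2}$. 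For the second term one notes that with this choice of $\epsilon_n$ the threshold is of order $\log^{4/3}n/\log\log n$, and Markov's inequality applied with a moment order $m=m(n)$ allowed to grow like $\log^{1/3}n$ bounds it by $\exp(-c\,m(n)\log\log n)$, which is $o\bigl(\PT{\widetilde{\mathcal{Y}}_n \ge x\sigma_n^2}\bigr)$ once $m(n)$ grows that fast, provided $\ET{\widetilde{\Delta}_n^{m}} \le (C\log n\log\log n)^m$ up to $m=m(n)$. This is exactly where the moment condition \eqref{eq:classF_large_dev_moments} enters: it bounds $\ET{\Delta_{n,\beta(n)}^{m}}$ with the required constant simultaneously for all $m$, and the remainder $\widetilde{\Delta}_n - \Delta_{n,\beta(n)} = \sum_{k<\beta(n)}\Lambda(k)(\widetilde{D}_{nk}-\widetilde{D}_{nk}^*)$ admits a direct high-moment bound because for $k<\beta(n)=e^{\log^x n}$ one has $\widetilde{D}_{nk}\ge 1$ with overwhelming probability, so this part is genuinely small. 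Finally, $\log O_n - \log\widetilde{O}_n \le \log Y_n - \log\widetilde{Y}_n = \sum_{m>b_n}\log(m)C_m$, whose high moments are bounded by $(C\log n\log\log n)^m$ by a direct computation with the joint-moment formulas for the cycle counts and Corollary~\ref{cor:haviour_of_hn} (no extra hypothesis is needed here), and the same Markov-with-growing-order argument transfers the estimate from $\widetilde{\mathcal{O}}_n$ to $\mathcal{O}_n$. These two transfers are the finer-scale analogues of Lemma~\ref{lem:classF_exp_equiv_classical}.

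The hard part is the second term in the transfer: showing that the discrepancy $\widetilde{\Delta}_n$ between the truncated order and $\widetilde{Y}_n$ exceeds a threshold of order $\log^{4/3}n$ only with probability $o(e^{-c\log^{1/3}n})$. Unlike the local limit theorem, where a first-moment Markov bound sufficed, the moment method must here be pushed to order $\sim\log^{1/3}n$, which is precisely what forces the (so far unproven in general) moment condition \eqref{eq:classF_large_dev_moments}; isolating the small-prime-power part $\sum_{k<\beta(n)}\Lambda(k)(\widetilde{D}_{nk}-\widetilde{D}_{nk}^*)$, which that condition does not cover, and controlling its high moments by hand is an additional technical wrinkle.
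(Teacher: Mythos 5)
Your proposal follows essentially the same route as the paper's proof: precise deviations for $\mathcal{\widetilde{Y}}_n$ from the mod-Gaussian convergence of Lemma~\ref{lem:classF_mod-conv_Y_n} combined with Theorem~3.2 of \cite{FeMeNi13}, followed by two exponential-equivalence transfers $\mathcal{\widetilde{Y}}_n\to\mathcal{\widetilde{O}}_n\to\mathcal{O}_n$ in which the moment condition \eqref{eq:classF_large_dev_moments} controls $\Delta_{n,\beta(n)}$ and the small-$k$ and large-cycle contributions are bounded separately --- this is exactly what Lemmas~\ref{lem:classF_exp_equiv_logYn_logOn} and~\ref{lem:classF_exp_equiv_log_tildeOn_logOn} do, with an exponential Chebyshev bound at $s=\log\log(n)/\log(n)$ plus a Cauchy--Schwarz split playing the role of your Markov bound with growing moment order. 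One minor caveat: for $k$ near $\beta(n)$ the event $\widetilde{D}_{nk}\geq 1$ is in fact unlikely (there $\PT{\widetilde{D}_{nk}^*=0}\approx e^{-\vartheta\log(n)/k}\to 1$), so the small-$k$ part should be handled as in the paper, via $\widetilde{D}_{nk}-\widetilde{D}_{nk}^*\leq\widetilde{D}_{nk}$ and the explicit moment generating function of $\sum_{m\leq\beta(n)}\log(m)\,C_m$, rather than by your ``$\widetilde{D}_{nk}\geq 1$ with overwhelming probability'' heuristic.
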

To prove this result, we proceed as follows: from the mod-Gaussian convergence of $\mathcal{\widetilde{Y}}_n$ stated in Lemma~\ref{lem:classF_mod-conv_Y_n} we deduce a precise large deviations estimate for $\mathcal{\widetilde{Y}}_n$. Then, using the moment condition \eqref{eq:classF_large_dev_moments} we prove exponential equivalence similar to Lemma~\ref{lem:classF_exp_equiv_classical} to transfer the estimate to $\mathcal{O}_n$.
\begin{proof}[Proof of Theorem~\ref{thm:classF_mod_large-dev1}]
 First, combine Lemma~\ref{lem:classF_mod-conv_Y_n} with Theorem 3.2 in \cite{FeMeNi13} to get the same precise deviations estimate for $\mathcal{\widetilde{Y}}_n$ (take $t_n = \sigma_n^2$, $F(x)= x^2/2 = \eta(x)$ and $\varphi(x)$ as in Lemma~\ref{lem:classF_mod-conv_Y_n}). Lemma~\ref{lem:classF_exp_equiv_logYn_logOn} below transfers the result to $\mathcal{\widetilde{O}}_n$ and subsequently Lemma~\ref{lem:classF_exp_equiv_log_tildeOn_logOn} transfers the result to $\mathcal{O}_n$.
\end{proof}

\begin{lemma}\label{lem:classF_exp_equiv_log_tildeOn_logOn}
Under the assumptions of Theorem~\ref{thm:classF_mod_large-dev1} the following holds for any $c>0$:
\begin{align*}
\lim_{n \rightarrow \infty} \frac{1}{\sigma_n^2} 
\log \PT{\log O_n - \log \widetilde{O}_n >c \log^{4/3}(n)} = -\infty.
\end{align*}
\end{lemma}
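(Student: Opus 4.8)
The plan is to dominate $\log O_n-\log\widetilde O_n$ by $\log(n)$ times the number of long cycles and then control the tail of that count through its factorial moments. Writing $N_n:=\sum_{m=b_n+1}^{n}C_m$, one has (this domination is already used in the proof of Theorem~\ref{thm:classF_local_limit}) the deterministic inequality
\begin{align*}
\log O_n-\log\widetilde O_n\le\log Y_n-\log\widetilde Y_n=\sum_{m=b_n+1}^{n}\log(m)\,C_m\le\log(n)\,N_n ,
\end{align*}
so it suffices to show $\sigma_n^{-2}\log\PT{N_n\ge\ell_n}\to-\infty$ with $\ell_n:=\lceil c\log^{1/3}(n)\rceil$. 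Using $\one_{\{N_n\ge\ell\}}\le\binom{N_n}{\ell}$, the identity $\binom{N_n}{\ell}=\sum_{\sum_m a_m=\ell}\prod_m\binom{C_m}{a_m}$ (sum over $m\in(b_n,n]$), and the joint factorial-moment formula $\ET{\prod_m(C_m)_{(a_m)}}=\prod_m(\theta_m/m)^{a_m}\,h_{n-\sum_m m a_m}/h_n$ for $\sum_m m a_m\le n$ (obtained from the cycle-index theorem exactly as $\ET{C_m}$ and $\ET{C_mC_{m'}}$ are computed in the proof of Proposition~\ref{prop:bound_Dnk}), this yields
\begin{align*}
\PT{N_n\ge\ell}\le\ET{\binom{N_n}{\ell}}=\frac1{\ell!}\sum_{\substack{m_1,\dots,m_\ell\in(b_n,n]\\ M:=m_1+\cdots+m_\ell\le n}}\ \prod_{i=1}^{\ell}\frac{\theta_{m_i}}{m_i}\cdot\frac{h_{n-M}}{h_n}.
\end{align*}

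I would then insert Corollary~\ref{cor:haviour_of_hn}, which gives the uniform bound $h_{n-M}/h_n\le C\,(\max(n-M,1)/n)^{\vartheta-1}r^{M}$ for $0\le M\le n$, together with $\theta_m r^m=\vartheta+O(1)$ from \eqref{eq:classF_theta_k_to_vartheta}; the powers of $r$ cancel and
\begin{align*}
\PT{N_n\ge\ell}\le\frac{C_1^{\ell}}{\ell!}\sum_{\substack{m_1,\dots,m_\ell>b_n\\ M\le n}}\Big(\tfrac{\max(n-M,1)}{n}\Big)^{\vartheta-1}\prod_{i=1}^{\ell}\frac1{m_i}.
\end{align*}
After rescaling $m_i=ny_i$ I would compare the multiple sum with the multiple integral $I_\ell:=\int_{\{y_i>1/\log^2 n,\ \sum_i y_i<1\}}(1-\sum_i y_i)^{\vartheta-1}\prod_i\tfrac{dy_i}{y_i}$: a covering-by-unit-cubes argument bounds the sum by $2^{\ell}I_\ell$ plus a ``boundary'' term coming from $M$ within $O(\ell)$ of $n$ (where the integrand is unbounded but integrable), and that boundary term is only $O(n^{-\vartheta}(\log^2 n)^{\ell})$, hence negligible on every relevant scale. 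The core estimate is $I_\ell\le(C\log\log n)^{\ell}$, which I would prove by induction on $\ell$: the case $\ell=1$ is elementary, and for the inductive step one integrates out $y_\ell$ and splits at $y_\ell=(1-s)/2$ with $s=\sum_{i<\ell}y_i$, obtaining $\int_{1/\log^2 n}^{1-s}(1-s-y_\ell)^{\vartheta-1}\tfrac{dy_\ell}{y_\ell}\le C(1-s)^{\vartheta-1}\log\log n$ and hence $I_\ell\le C\log\log n\cdot I_{\ell-1}$.

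Combining the pieces gives $\PT{N_n\ge\ell}\le(C''\log\log n)^{\ell}/\ell!+O(n^{-\vartheta/2})$ with $C''$ independent of $\ell$ and $n$. With $\ell=\ell_n\sim c\log^{1/3}(n)$ and $\ell!\ge(\ell/e)^{\ell}$ this is at most $\big(eC''\log\log(n)/(c\log^{1/3}(n))\big)^{c\log^{1/3}(n)}(1+o(1))=\exp\!\big(-\tfrac{c}{3}\log^{1/3}(n)\log\log(n)(1+o(1))\big)$; dividing by $\sigma_n^2=\tfrac{\vartheta}{3}\log^{1/3}(n)$ and letting $n\to\infty$ produces $-\tfrac{c}{\vartheta}\log\log(n)(1+o(1))\to-\infty$, which is the claim. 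The main obstacle is the estimate $I_\ell\le(C\log\log n)^{\ell}$ in the regime $\vartheta<1$, where the boundary singularity $(1-\sum_i y_i)^{\vartheta-1}$ forces the careful splitting above (for $\vartheta\ge1$ one simply uses $(1-\sum_i y_i)^{\vartheta-1}\le1$ and $I_\ell\le(2\log\log n)^{\ell}$ at once); a secondary technical point is the sum-to-integral passage near $M\approx n$, where monotonicity of the summand fails and the small-volume boundary region must be estimated by hand.
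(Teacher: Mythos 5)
Your argument is correct, and its first reduction --- dominating $\log O_n-\log\widetilde{O}_n$ by $\log Y_n-\log\widetilde{Y}_n\le\log(n)\sum_{m>b_n}C_m$ and thereby reducing the claim to a tail bound for the number $N_n$ of long cycles at the threshold $c\log^{1/3}(n)$ --- is exactly the paper's. Where you genuinely diverge is in how that tail is controlled. The paper applies Markov's inequality to $e^{sN_n}$, importing the asymptotics $\log\ET{e^{sN_n}}=\vartheta(e^s-1)\log(n)+(K-L_{b_n}(r))(e^s-1)+o(1)=O\bigl(e^s\log\log(n)\bigr)$ from \cite[Theorem 4.3]{NiStZe13}, and then optimizes with $s=\log\log\log(n)$, obtaining a bound of order $\exp\bigl(-c\log^{1/3}(n)\log\log\log(n)\bigr)$. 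You instead bound $\PT{N_n\ge\ell}$ by the $\ell$-th binomial moment, evaluate it exactly via the joint factorial-moment formula (the same conditioning identity the paper uses for the first two moments of $D_{nk}$ in the proof of Proposition~\ref{prop:bound_Dnk}), and estimate the resulting multiple sum through the integral bound $I_\ell\le(C\log\log n)^{\ell}$. This route is more self-contained --- it needs no singularity-analysis input beyond Corollary~\ref{cor:haviour_of_hn} --- at the price of the inductive integral estimate and the careful treatment of the region $M\approx n$ when $\vartheta<1$, both of which you identify and handle correctly. Your resulting bound $\exp\bigl(-\tfrac{c}{3}\log^{1/3}(n)\log\log(n)(1+o(1))\bigr)$ is in fact marginally sharper than the one the paper extracts, though both comfortably yield the required divergence to $-\infty$ after division by $\sigma_n^2=\tfrac{\vartheta}{3}\log^{1/3}(n)$.
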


\begin{proof}
We have 
\begin{align*}
\log O_n - \log \widetilde{O}_n \leq \log Y_n - \log \widetilde{Y}_n
\end{align*}
and thus the assertion is proved if we can show
\begin{align*}
\lim_{n \rightarrow \infty} \frac{1}{\log^{1/3}(n)} 
\log \PT{\log Y_n  - \log \widetilde{Y}_n  >c \log^{4/3}(n)} = -\infty.
\end{align*}
Define
\begin{align*}
D(n,b) \coloneqq \log Y_n  - \log \widetilde{Y}_n  = \sum_{m=b_n +1}^n \log(m) \,C_m 
\end{align*}
and notice that
\begin{align*}
\PT{\log Y_n - \log \widetilde{Y}_n >c \log^{4/3}(n)} 
\leq \PT{ T(b_n,n)>c \log^{1/3}(n)} 
\end{align*}
where
\begin{align*}
T(b_n, n) = \sum_{m=b_n +1}^n C_m.
\end{align*}
 Thus it suffices to show 
\begin{align*}
\lim_{n \rightarrow \infty} \frac{1}{\log^{1/3}(n)}
\log \PT{T(b_n, n)>c \log^{1/3}(n)} = -\infty.
\end{align*}
With Markov's inequality we get 
\begin{align}\label{eq:algrowth_LDP_tildeYn_zuY_n}
  \frac{1}{\log^{1/3}(n)}\log\mathbb{P}_{\Theta}\Big(e^{sT(b_n,n)}\geq e^{s c \log^{1/3}(n)}\Big) 
\leq  -sc  + \frac{\log \ET{e^{sT(b_n, n)}}}{\log^{1/3}(n)}.
\end{align}
The generating function of $T(b_n, n)$ is given by
\begin{align*}
 \log \ET{e^{sT(b_n, n)}} = \vartheta (e^s-1) \log(n) + (K - L_{b_n}(r))(e^s-1) + o(1)
\end{align*}
where
\begin{align*}
 L_{b_n}(r) 
 = \sum_{m=1}^{b_n} \frac{\theta_m}{m} r^m 
  = \vartheta \sum_{m=1}^{b_n} \frac{1}{m} + O(1)
  = \vartheta \log(b_n) + O(1),
\end{align*}
see \cite[Theorem 4.3]{NiStZe13}  with $A_n = \{1, ..., b_n \}$. Thus
\begin{align*}
\eqref{eq:algrowth_LDP_tildeYn_zuY_n} \leq -sc + O \Big( \frac{e^s \log\log(n)}{\log^{1/3}(n)}\Big)
\end{align*}
and choose $s = \log\log\log(n)$ to get the result.
\end{proof}

\begin{lemma}\label{lem:classF_exp_equiv_logYn_logOn}
Under the assumptions of Theorem~\ref{thm:classF_mod_large-dev1} the following holds for any $c>0$:
\begin{align*}
\lim_{n \rightarrow \infty} \frac{1}{\sigma_n^2} 
\log \PT{\log \widetilde{Y}_n - \log \widetilde{O}_n >c \log^{4/3}(n)} = -\infty.
\end{align*}
\end{lemma}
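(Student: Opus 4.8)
The plan is to bound the difference $\log \widetilde{Y}_n - \log \widetilde{O}_n = \widetilde{\Delta}_n$ using the moment condition \eqref{eq:classF_large_dev_moments}, exactly as advertised in the paragraph preceding Theorem~\ref{thm:classF_mod_large-dev1}. First I would split $\widetilde{\Delta}_n$ according to the threshold $\beta(n) = \exp(\log^x(n))$ with $x < 1$, writing
\begin{align*}
\widetilde{\Delta}_n = \sum_{k \leq n} \Lambda(k)(\widetilde{D}_{nk} - \widetilde{D}_{nk}^*)
= \sum_{k < \beta(n)} \Lambda(k)(\widetilde{D}_{nk} - \widetilde{D}_{nk}^*) + \Delta_{n,\beta(n)}.
\end{align*}
For the low range $k < \beta(n)$, I would argue that this contribution is deterministically $O(\psi(\beta(n))) = O(\beta(n)) = o(\log^{4/3}(n))$ — actually one must be slightly more careful, since $\widetilde{D}_{nk} - \widetilde{D}_{nk}^* \leq \widetilde{D}_{nk}$ is random; but one can bound $\sum_{k<\beta(n)} \Lambda(k) \widetilde{D}_{nk} \leq \sum_{m \leq b_n} C_m \sum_{k \mid m} \Lambda(k) = \sum_{m\leq b_n} \log(m) C_m \leq \log(n)\,T(1,b_n)$, and the tail of $T(1,b_n)$ is controlled by \cite[Theorem 4.3]{NiStZe13} — wait, this is too lossy. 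Better: restrict attention to the event where the large-$k$ part dominates and observe $\sum_{k < \beta(n)}\Lambda(k)(\widetilde D_{nk}-\widetilde D_{nk}^\ast)$ is small on a high-probability event; alternatively, simply note that on the complementary small-probability event the whole argument can be absorbed. The cleanest route is: $\widetilde{\Delta}_n \leq \Delta_{n,\beta(n)} + \Xi_n$ where $\Xi_n := \sum_{k<\beta(n)}\Lambda(k)(\widetilde D_{nk}-\widetilde D_{nk}^\ast)$, and show separately that $\sigma_n^{-2}\log\PT{\Xi_n > \tfrac c2 \log^{4/3}(n)} \to -\infty$ using a crude moment or Markov bound (since $\beta(n)$ is subpolynomial, the number of relevant cycle lengths $m \le \beta(n)^{O(\log n /\log\beta(n))}$ stays well-controlled; concretely $\Xi_n \le \log(\beta(n)^{\log n}) \cdot (\text{count of small-length cycle pairs})$, and the relevant pair counts have exponentially small tails by the Poisson-type bounds implicit in Proposition~\ref{prop:bound_Dnk}).

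The heart of the argument is the bound on $\Delta_{n,\beta(n)}$. Here I would use Markov's inequality on the $m$-th moment: for any $m \in \N$,
\begin{align*}
\PT{\Delta_{n,\beta(n)} > \tfrac{c}{2} \log^{4/3}(n)}
\leq \frac{\ET{(\Delta_{n,\beta(n)})^m}}{(\tfrac c2 \log^{4/3}(n))^m}
\leq \frac{k_m \big(\log(n)\log\log(n)\big)^m}{(\tfrac c2 \log^{4/3}(n))^m}
= k_m \left(\frac{2\log\log(n)}{c\,\log^{1/3}(n)}\right)^m,
\end{align*}
using \eqref{eq:classF_large_dev_moments}. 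Since $k_m = O(e^{\alpha m})$ with $\alpha$ independent of $n$ and $m$, for $n$ large enough the ratio $e^{\alpha}\cdot \tfrac{2\log\log(n)}{c\log^{1/3}(n)}$ is less than, say, $1/2$, and then taking $m \to \infty$ shows the probability is smaller than any fixed power — more precisely, I would choose $m = m(n) \to \infty$ slowly (e.g. $m(n) = \lfloor \log^{1/6}(n)\rfloor$) so that the bound becomes $\exp\big(-m(n)(\log\log^{1/3}(n) - \log\log\log(n) - \alpha + O(1))\big) = \exp(-\Theta(\log^{1/6}(n)\log\log(n)))$, which divided by $\sigma_n^2 = \tfrac{\vartheta}{3}\log^{1/3}(n)$ still tends to $-\infty$. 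Actually one needs the exponent to beat $\sigma_n^2 = \Theta(\log^{1/3}(n))$, so $m(n) \to \infty$ with $m(n)\log\log(n) \gg \log^{1/3}(n)$ suffices, e.g. $m(n) = \lceil \log^{1/3}(n)\rceil$ gives exponent $\Theta(\log^{1/3}(n)\log\log\log\log\ldots)$ — let me instead just take $m(n) = \lceil \log^{1/2}(n)\rceil$, then the bound is $\exp(-c'\log^{1/2}(n)\log\log(n))$ and $\sigma_n^{-2}$ times its log is $-c'\log^{1/6}(n)\log\log(n)\to-\infty$. Combining the two pieces via $\PT{\widetilde\Delta_n > c\log^{4/3}(n)} \le \PT{\Delta_{n,\beta(n)}>\tfrac c2\log^{4/3}(n)} + \PT{\Xi_n > \tfrac c2 \log^{4/3}(n)}$ and $\tfrac1{\sigma_n^2}\log(a_n+b_n) \le \tfrac1{\sigma_n^2}\log(2\max(a_n,b_n))$ finishes the proof.

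The main obstacle is the low-range term $\Xi_n = \sum_{k < \beta(n)}\Lambda(k)(\widetilde D_{nk}-\widetilde D_{nk}^\ast)$: one must verify that, even though $\beta(n)$ is subpolynomial in $n$ so that $\psi(\beta(n)) = o(\log^{4/3}(n))$ deterministically bounds $\sum_{k<\beta(n)}\Lambda(k)$, the \emph{random} multiplicities $\widetilde D_{nk}$ do not blow this up. The key point is that $\widetilde D_{nk} - \widetilde D_{nk}^\ast = (\widetilde D_{nk}-1)^+$ counts "excess" cycles of length divisible by $k$, which is $0$ unless at least two such cycles occur; by Proposition~\ref{prop:bound_Dnk}(2) the expectation of each $\widetilde D_{nk}(\widetilde D_{nk}-1)$ is $O(\log^2(n)/k^2)$, so $\ET{\Xi_n} = O\big(\sum_{k<\beta(n)}\Lambda(k)\log^2(n)/k^2\big) = O(\log^2(n))$, and one must push this to an $m$-th moment bound of the required exponential-in-$m$ type — which is plausibly routine but requires care, essentially because the relevant cycle-pair counts are Poisson-dominated with summable intensities. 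Alternatively, and more safely, I would note that since $\beta(n) = \exp(\log^x n)$ can be chosen with $x$ as small as we like, one can simply take $\beta(n)$ large enough relative to the scale of the problem and fold $\Xi_n$'s contribution into the $\Delta_{n,\beta(n)}$ estimate by adjusting constants, or appeal directly to the already-established \eqref{eq:classF_Delta_tilde} to dispatch it. In any case, once $\Xi_n$ is handled, the moment condition does all the remaining work cleanly.
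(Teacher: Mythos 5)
Your decomposition at $\beta(n)$ and your treatment of the tail piece $\Delta_{n,\beta(n)}$ are essentially right, and the tail argument is in fact a legitimate alternative to the paper's: applying Markov's inequality to the $m$-th moment with a growing $m=m(n)=\lceil\log^{1/2}(n)\rceil$ (admissible because the $n_0$ in \eqref{eq:classF_large_dev_moments} is uniform in $m$) gives $\PT{\Delta_{n,\beta(n)}>\tfrac{c}{2}\log^{4/3}(n)}\le \exp\bigl(m(n)\bigl(\alpha+O(\log\log\log n)-\tfrac13\log\log(n)\bigr)\bigr)$, whose logarithm divided by $\sigma_n^2=\tfrac{\vartheta}{3}\log^{1/3}(n)$ tends to $-\infty$. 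The paper instead takes exponential moments of $\Delta_{n,\beta(n)}$ with $s=\log\log(n)/\log(n)$ and controls them by a Cauchy--Schwarz splitting on the event $\{\Delta_{n,\beta(n)}\le \log^{4/3}(n)/\log\log(n)\}$; your polynomial-moment route is more direct.

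The genuine gap is the low range $\Xi_n=\sum_{k\le\beta(n)}\Lambda(k)(\widetilde{D}_{nk}-\widetilde{D}_{nk}^*)$, and none of your fallbacks closes it. Appealing to \eqref{eq:classF_Delta_tilde} cannot work: that estimate only gives a probability of order $(\log\log(n))^{1-\kappa}$, i.e.\ polynomial decay in $\log\log(n)$, whereas here the probability must be smaller than $\exp(-C\log^{1/3}(n))$ for every $C>0$. "Adjusting $\beta(n)$ by constants" does not work either: the moment condition is only hypothesized for $\beta(n)=\exp(\log^x(n))$ with $x<1$, so you cannot shrink the low range to nothing without assuming a moment bound the hypothesis does not provide, and enlarging $\beta(n)$ only worsens the low range. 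The Poisson-domination sketch for pair counts is not carried out, and first-moment information such as Proposition~\ref{prop:bound_Dnk} only yields $\ET{\Xi_n}=O(\log^2(n))$, which via Markov gives nothing at this scale. What the paper does here is bound the low-$k$ sum by the weighted short-cycle statistic $\sum_{m\le\beta(n)}\log(m)\,C_m$ and compute its exponential moment explicitly by the same contour-integral argument as in Lemma~\ref{lem:E_tilde_Yn}: with $s=\log\log(n)/\log(n)$ and $\beta(n)=\exp(\sqrt{\log(n)})$ one finds $\log\ET{e^{s\sum_{m\le\beta(n)}\log(m)C_m}}=O(\log\log(n))$, so Markov's inequality gives $\log\PT{\Xi_n>\tfrac{c}{2}\log^{4/3}(n)}\le -\tfrac{c}{2}\log^{1/3}(n)\log\log(n)+O(\log\log(n))$, which after division by $\sigma_n^2$ tends to $-\infty$. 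Some explicit exponential- (or high polynomial-) moment control of the small-$k$ contribution of this kind is indispensable; without it your argument does not close.
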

\begin{proof}
 We use \eqref{eq:log_tildeYn_von_mangold} and  \eqref{eq:log_tildeOn_von_mangold} an get  for $\beta(n)\in\N$ (determined later)
 \begin{align*}
  \log \widetilde{Y}_n - \log \widetilde{O}_n
  &=
  \sum_{m=1}^{\beta(n)} \Lambda(k)(\widetilde{D}_{nk}-\widetilde{D}_{nk}^*)
  +
  \sum_{m=\beta(n)+1}^{b_n} \Lambda(k)(\widetilde{D}_{nk}-\widetilde{D}_{nk}^*)\\
  &\leq
  \sum_{m=1}^{\beta(n)} \Lambda(k)\widetilde{D}_{nk}
  +
  \sum_{m=\beta(n)+1}^{b_n} \Lambda(k)(\widetilde{D}_{nk}-\widetilde{D}_{nk}^*)\\
  &\leq
  \sum_{m=1}^{\beta(n)}  \log(m) \, C_m
  +
  \sum_{m=\beta(n)+1}^{b_n} \Lambda(k)(\widetilde{D}_{nk}-\widetilde{D}_{nk}^*)
 \end{align*}
We thus have
\begin{align*}
 \PT{\log \widetilde{Y}_n - \log \widetilde{O}_n >c \log^{4/3}(n)}
 &\leq  
 \PT{\sum_{m=1}^{\beta(n)} \log(m) \, C_m > \frac{c}{2}\log^{4/3}(n)}\\
 &+
 \PT{\sum_{k=\beta(n)}^n \Lambda(k)(\widetilde{D}_{nk}-\widetilde{D}_{nk}^*) > \frac{c}{2}\log^{4/3}(n)}.
\end{align*}

Notice that for any sequences $(a_n)_{n\in \N}$ and $(b_n)_{n\in \N}$ with  $a_n,b_n\in (0, \infty)$ and any $g(n) \rightarrow \infty$
\begin{align*}
\limsup_{n\rightarrow \infty} \frac{\log(a_n + b_n)}{g(n)}  
= \max \Big\{ \limsup_{n\rightarrow \infty} \frac{\log(a_n)}{g(n)} ,\limsup_{n\rightarrow \infty} \frac{\log(b_n) }{g(n)} \Big\}.
\end{align*}
We want to find the biggest $\beta(n)$ such that 
\begin{align}\label{eq:classF_LDP_transfer_1}
\frac{1}{\log^{1/3}(n)} \log\PT{\sum_{m=1}^{\beta(n)} \log(m) \, C_m > \frac{c}{2}\log^{4/3}(n)}
\rightarrow - \infty
\end{align}
is satisfied. Subsequently, by means of the moment condition \eqref{eq:classF_large_dev_moments} we show 
\begin{align}\label{eq:classF_LDP_transfer_2}
\frac{1}{\log^{1/3}(n)} \log\PT{\sum_{k=\beta(n)}^n \Lambda(k)(\widetilde{D}_{nk}-\widetilde{D}_{nk}^*) > \frac{c}{2}\log^{4/3}(n)}
\rightarrow - \infty.
\end{align}
 We start with \eqref{eq:classF_LDP_transfer_1}.
For any $s>0$, Markov's inequality yields
\begin{align}
&\frac{1}{\log^{1/3}(n)} \log\PT{\sum_{m=1}^{\beta(n)} \log(m) \, C_m > \frac{c}{2}\log^{4/3}(n)} \nonumber \\
= \,& \frac{1}{\log^{1/3}(n)} \log\PT{\exp\Big(s\sum_{m=1}^{\beta(n)} \log(m) \, C_m\Big )> \exp\Big(\frac{sc}{2}\log^{4/3}(n)\Big)} \nonumber\\
\leq \, & - \frac{sc}{2}\log(n) + \frac{1}{\log^{1/3}(n)} \log\ET{\exp\Big(s\sum_{m=1}^{\beta(n)} \log(m) \, C_m\Big )}. \label{eq:classF_LD_1}
\end{align}
The asymptotic behaviour of the moment generating function in \eqref{eq:classF_LD_1} can be computed in exactly the same way as the moment generating function 
of $\log \widetilde{Y}_n$ in Lemma~\ref{lem:E_tilde_Yn}. Indeed, only minor modifications are required and we thus omit the computation.
This then gives for $s$, $\beta(n)$ with $s\log \beta(n) =o(1)$ 
\begin{align*}
\ET{\exp\Big(s\sum_{m=1}^{\beta(n)} \log(m) \, C_m\Big )} 
= \exp\left( \sum_{m=1}^{\beta(n)} (e^{s\log m}-1) \frac{\theta_m}{m}r^m \right)(1+o(1)).
\end{align*}
Using the assumption $\theta_m r^m = \vartheta + O(m^{-\delta})$ then gives
\begin{align*}
 \frac{1}{\log^{1/3}(n)} \log\ET{e^{s\sum_{m=1}^{\beta(n)} \log(m) \, C_m}}
 %
%
& =  \frac{\vartheta}{\log^{1/3}(n)} \left(\sum_{m=1}^{\beta(n)} \frac{e^{s\log(m)}-1}{m}\right) + o(1). 
\end{align*}
%
%
%
%

Now set $s\coloneqq \log\log(n) / \log(n)$, then for all $\beta(n) = \exp(o(\log n/\log\log n)) $, 
\begin{align*}
 \frac{\vartheta}{\log^{1/3}(n)} \sum_{m=1}^{\beta(n)} \frac{e^{s\log(m)}-1}{m}
& = O\bigg( \frac{\log\log(n)}{\log^{4/3}(n)} \sum_{m=1}^{\beta(n)} \frac{\log(m)}{m}\bigg)\\
& = O\bigg( \frac{\log\log(n)\log^2(\beta(n))}{\log^{4/3}(n)} \bigg).
\end{align*}
Thus, set $\beta(n) \coloneqq \exp(\sqrt{\log(n)})$ to obtain
\begin{align*}
\eqref{eq:classF_LD_1}  
= - \frac{c \log\log(n)}{2} 
+ O\bigg( \frac{\log\log(n)}{\log^{1/3}(n)} \bigg)
\end{align*}
and therefore assertion \eqref{eq:classF_LDP_transfer_1} is proved. So let us consider \eqref{eq:classF_LDP_transfer_2}. Again, for $s>0$, and with the notation from the moment condition \eqref{eq:classF_large_dev_moments},
\begin{align*}
& \frac{1}{\log^{1/3}(n)} \log\PT{\Delta_{n,\beta(n)} > \frac{c}{2}\log^{4/3}(n)} \\
= & \, \frac{1}{\log^{1/3}(n)} \log\PT{e^{s \Delta_{n,\beta(n)}} > e^{\frac{sc}{2}\log^{4/3}(n)}} \\
\leq & \, -\frac{sc}{2}\log(n) + \frac{1}{\log^{1/3}(n)} \log\ET{e^{s \Delta_{n,\beta(n)}}} .
\end{align*}
Thus, we set again $s\coloneqq \log\log(n) / \log(n)$. Define the event  
$$A \coloneqq \Big\{\Delta_{n,\beta(n)} \leq \frac{\log^{4/3}(n)}{\log\log(n)}\Big\}. $$
Then for  $s= \log\log(n) / \log(n)$
\begin{align*}
\ET{e^{s \Delta_{n,\beta(n)}}} 
&= \ET{e^{s \Delta_{n,\beta(n)}}\one_{\{A\}}} 
+ \ET{e^{s \Delta_{n,\beta(n)}}\one_{\{A^c\}}} \\
&\leq e^{\log^{1/3}(n)}
+ \ET{e^{s \Delta_{n,\beta(n)}}\one_{\{A^c\}}} .
\end{align*} 
We will show that 
\begin{align}\label{eq:classF_large_dev_O1}
 \frac{1}{\log^{1/3}(n)} \log\ET{e^{s \Delta_{n,\beta(n)}}\one_{\{A^c\}}}  = O(1)
\end{align}
 holds. Cauchy's inequality yields
\begin{align*}
\ET{e^{s \Delta_{n,\beta(n)}}\one_{\{A^c\}}} 
&\leq \PT{A^c}^2\ET{e^{2s \Delta_{n,\beta(n)}}}  \\
&\leq 
\PT{A^c} \sum_{m=0}^{g(n)} \frac{\ET{(2s\Delta_{n,\beta(n)})^m}}{m!}\\
&+\sum_{m=g(n)+1}^{\infty} \frac{\ET{(2s\Delta_{n,\beta(n)})^m}}{m!},
\end{align*}
where $g(n)$ is a function to be determined in a moment. By the moment condition \eqref{eq:classF_large_dev_moments} and by Stirling's formula we have for $s= \log\log(n) / \log(n)$
\begin{align*}
&\sum_{m=g(n)+1}^{\infty} \frac{\ET{(2s\Delta_{n,\beta(n)})^m}}{m!}\leq 
\sum_{m=g(n)+1}^{\infty} \frac{k_m\, (2s)^m \bigl( \log(n) \log\log(n) \bigr)^m}{m!} 
\\
&\leq
\sum_{m=g(n)+1}^{\infty} \frac{k_m\, 2^m \bigl( \log\log(n) \bigr)^{2m}}{m!} 
=
O\left( \sum_{m=g(n)+1}^{\infty} \frac{  \bigl(2 e^{\alpha}\,  (\log\log(n))^2 \bigr)^m}{m!}   \right)
\\
%
&=  O \bigg( \sum_{m=g(n)+1}^{\infty} \exp\Big(m\log\bigl(2e^{\alpha}\,(\log\log(n))^2\bigr)-m\log(m)\Big)\bigg).
\end{align*}
%
%
Consequently, for $g(n) = (\log\log(n))^3$, this sum satisfies \eqref{eq:classF_large_dev_O1}. On the other hand, by Markov's inequality 
$$\PT{A^c} 
= \PT{\Delta_{n,\beta(n)} > \frac{\log^{4/3}(n)}{\log\log(n)}}
\leq \frac{\log\log(n)}{\log^{4/3}(n)}\ET{\Delta_{n,\beta(n)}} .
$$
Notice that 
$$\widetilde{D}_{nk} - \widetilde{D}^*_{nk}
\leq D_{nk} - D^*_{nk}
\leq D_{nk}(D_{nk}-1)$$ 
and recall that $\beta(n) = \exp(\sqrt{\log(n)})$. Furthermore, recall \eqref{eq:intro_Mangoldt_sum3} and Proposition~\ref{prop:bound_Dnk}. Then
\begin{align*}
\ET{\Delta_{n,\beta(n)}} 
&= \sum_{k = \beta(n)}^n \Lambda(k)\ET{D_{nk}(D_{nk}-1)} 
= O\bigg(\log^2(n)\sum_{k = \beta(n)}^n \frac{\Lambda(k)}{k^2} \bigg)\\
&= O\bigg(\frac{\log^2(n)}{\beta(n)}\bigg)
= O\bigg(\log^2(n) \, e^{-\sqrt{\log(n)}}\bigg).
\end{align*}
This implies
\begin{align*}
\PT{A^c} = O\bigg(\log^{2/3}(n)\log\log(n) \, e^{-\sqrt{\log(n)}}\bigg).
\end{align*}
We thus get with the moment condition \eqref{eq:classF_large_dev_moments} and $s=\log\log(n)/\log(n)$
\begin{align*}
&\PT{A^c} \sum_{m=0}^{(\log\log(n))^3} \frac{\ET{(2s\Delta_{n,\beta(n)})^m}}{m!}
\leq
\PT{A^c} \sum_{m=0}^{(\log\log(n))^3} \frac{k_m\,2^m \bigl(\log\log(n) \bigr)^{2m}}{m!} 
\\
&=  O \bigg( \PT{A^c} \sum_{m=0}^{(\log\log(n))^3} \frac{(2 e^{\alpha})^m \bigl(\log\log(n) \bigr)^{2m}}{m!} \bigg)\\
&=  O \bigg( \PT{A^c} (\log\log(n))^3 \cdot \left[(2 e^{\alpha})^m \bigl(\log\log(n) \bigr)^{2m}\right]\Big|_{m=\log\log(n)}\bigg)\\
%
%
& = O \bigg(\PT{A^c} (\log\log(n))^3  \exp\Big( 2(\log\log(n))^3\log\log\log(n)\Big)  \bigg)\\
& = O \bigg(\log^{2/3}(n)(\log\log(n))^4 \exp\Big(-\sqrt{\log(n)} + 2(\log\log(n))^3\log\log\log(n)\Big)  \bigg).
\end{align*}
Altogether, we proved \eqref{eq:classF_large_dev_O1} and thus \eqref{eq:classF_LDP_transfer_2} holds. The proof is complete.
%
%
%
\end{proof}
\vskip 15pt 
\subsection{Expected value of the logarithm of a truncated order}
\label{sec:E_On_tilde}

Recall the definition of the truncated order $\widetilde{O}_n$ in \eqref{eq:def_On_tilde_Yn_tilde} .
We will compute a precise asymptotic expansion for $\mathbb{E}_{\Theta}[\log\widetilde{O}_n]$ .

%
\begin{theorem}\label{thm:classF_E_logO_n_tilde}
Suppose that $g_\Theta\in \mathcal{F}(r,\vartheta,K)$. Then
\begin{align}
 \ET{\log \widetilde{O}_n}
& = 
\sum_{m=1}^{b_n} \frac{\log(m)}{m} \theta_m r^m
-
\sum_{k=1}^{\log^2(n)} \Lambda(k)\exp\left(-\sum_{m=1}^{b_n}\frac{\theta_m}{m}r^m\one_{\{k|m\}}\right)\nonumber\\
&-
\sum_{k=1}^{\log^2(n)} \Lambda(k) \left(\sum_{m=1}^{b_n}\frac{\theta_m}{m}r^m\one_{\{k|m\}}-1\right)
 +
 O(1).
\label{eq:E_tilde_On_exact}
\end{align}
\end{theorem}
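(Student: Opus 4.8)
The plan is to start from the representation $\log\widetilde O_n = \sum_{k\le n}\Lambda(k)\widetilde D^*_{nk}$ in \eqref{eq:log_tildeOn_von_mangold} and compare it termwise with $\log\widetilde Y_n = \sum_{k\le n}\Lambda(k)\widetilde D_{nk}$. Writing $\widetilde D^*_{nk} = \widetilde D_{nk} - (\widetilde D_{nk} - \widetilde D^*_{nk})$ and using the first assertion of Lemma~\ref{lem:E_tilde_Yn} for $\ET{\log\widetilde Y_n} = \sum_{m=1}^{b_n}\frac{\log(m)}{m}\theta_m r^m + O(\log^{-1}(n))$, the problem reduces to computing $\ET{\widetilde D_{nk} - \widetilde D^*_{nk}}$ for each relevant $k$ and summing against $\Lambda(k)$. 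Since $\widetilde D_{nk}$ only counts cycles of length $m\le b_n$ with $k\mid m$, it vanishes identically once $k > b_n$, and for $k$ close to $b_n$ the contribution is negligible; moreover $\Lambda(k)=0$ unless $k$ is a prime power, so effectively the sum over $k$ is controlled. The first step is therefore to argue that $\sum_{k > \log^2(n)}\Lambda(k)\ET{\widetilde D_{nk} - \widetilde D^*_{nk}} = O(1)$, using the bound $\widetilde D_{nk}-\widetilde D^*_{nk}\le \widetilde D_{nk}(\widetilde D_{nk}-1)\le D_{nk}(D_{nk}-1)$ together with Proposition~\ref{prop:bound_Dnk}(2) and \eqref{eq:intro_Mangoldt_sum3}, exactly as in the proof of Lemma~\ref{lem:classF_closeness1 log Y_n and log O_n}; this justifies truncating the outer sum at $\log^2(n)$ and absorbing the tail into the $O(1)$ error.

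The heart of the argument is the asymptotic evaluation of $\ET{\widetilde D_{nk} - \widetilde D^*_{nk}}$ for fixed $k \le \log^2(n)$. Here I would use the decoupling/Poissonization available for this model: conditionally the relevant cycle counts $(C_m)_{k\mid m,\ m\le b_n}$ behave, up to the $h_{n-\cdot}/h_n$ ratios which Corollary~\ref{cor:haviour_of_hn} shows are $\approx r^{\,\cdot}$, like independent Poisson variables with parameters $\frac{\theta_m}{m}r^m$. Then $\widetilde D_{nk} = \sum_{k\mid m, m\le b_n} C_m$ is approximately Poisson with parameter $\mu_{nk}\coloneqq \sum_{m\le b_n}\frac{\theta_m}{m}r^m\one_{\{k\mid m\}}$, so that
\begin{align*}
\ET{\widetilde D_{nk} - \widetilde D^*_{nk}}
= \ET{\widetilde D_{nk}} - \PT{\widetilde D_{nk}\ge 1}
= \mu_{nk} - \big(1 - e^{-\mu_{nk}}\big) + (\text{error}),
\end{align*}
which is precisely $\mu_{nk} - 1 + e^{-\mu_{nk}}$, matching the two sums in \eqref{eq:E_tilde_On_exact}. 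To make this rigorous without a full Poisson approximation in total variation, I would instead compute $\ET{\widetilde D_{nk}}$ exactly via Lemma 2.5 in \cite{StZe14a} (as in the proof of Proposition~\ref{prop:bound_Dnk}) and compute $\PT{\widetilde D_{nk}=0}$ via a generating-function/Cauchy-integral identity of the same flavour as Lemma~\ref{lem:classF_generating_E_log_Y_tilde}, namely by setting the weights $\theta_m$ to zero for all $m\le b_n$ with $k\mid m$ and reading off the resulting normalization constant over $h_n$; singularity analysis on a curve as in Figure~\ref{fig:curve_flajolet_4} then gives $\PT{\widetilde D_{nk}=0} = e^{-\mu_{nk}}(1+o(1))$ with an error uniform enough in $k\le\log^2(n)$ to survive multiplication by $\Lambda(k)$ and summation.

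The remaining step is bookkeeping: assemble
\begin{align*}
\ET{\log\widetilde O_n}
= \ET{\log\widetilde Y_n} - \sum_{k\le n}\Lambda(k)\ET{\widetilde D_{nk} - \widetilde D^*_{nk}},
\end{align*}
substitute the tail estimate (error $O(1)$), the main-term formula for $\ET{\log\widetilde Y_n}$ (error $O(\log^{-1}(n))$), and the per-$k$ expansion $\mu_{nk} - 1 + e^{-\mu_{nk}} + o(1/k)$, then check that the accumulated errors $\sum_{k\le\log^2(n)}\Lambda(k)\cdot o(1/k) = o(\log\log(n))$ — in fact $O(1)$ with a bit of care, using $\mu_{nk} = \vartheta\log(b_n)/k + O(\log(k)/k)$ from \eqref{eq:classF_add_assumption_sum1} when that extra assumption is available, and a cruder argument otherwise. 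The main obstacle I anticipate is obtaining the $e^{-\mu_{nk}}$ term with an error term that is \emph{uniform in $k$} up to $k\approx\log^2(n)$: for large $k$ the parameter $\mu_{nk}$ is small, the event $\{\widetilde D_{nk}=0\}$ has probability close to $1$, and one must control the difference $\PT{\widetilde D_{nk}=0} - e^{-\mu_{nk}}$ by something like $O(\mu_{nk}^2)$ rather than a multiplicative $(1+o(1))$, so that $\sum_k\Lambda(k)\mu_{nk}^2$ converges; this is where the Cauchy-integral computation must be done carefully rather than invoked as a black box.
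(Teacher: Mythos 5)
Your proposal follows essentially the same route as the paper: decompose $\ET{\log\widetilde{O}_n}=\ET{\log\widetilde{Y}_n}-\sum_k\Lambda(k)\ET{\widetilde{D}_{nk}-\widetilde{D}^*_{nk}}$, truncate the outer sum at $\log^2(n)$ via Proposition~\ref{prop:bound_Dnk}(2) and \eqref{eq:intro_Mangoldt_sum3}, and evaluate $\PT{\widetilde{D}_{nk}=0}$ by the generating-function identity $\frac{1}{h_n}[t^n]\exp(g_\Theta(t)-\widetilde{g}_{\Theta,k}(t))$ followed by the contour argument of Lemma~\ref{lem:E_tilde_Yn} — which is exactly Lemmas~\ref{lem:classF_generating_uDnk_tilde} and~\ref{lem:classF_asymptotic_Dk_tilde} in the paper. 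Your worry about uniformity in $k$ is resolved there just as you anticipate: because $\widetilde{g}_{\Theta,k}$ is a polynomial (truncation at $b_n$), the contour integral yields the \emph{additive} error $O(b_n/(nk))$, which summed against $\Lambda(k)$ over $k\le\log^2(n)$ gives $O(\log^{-1}(n))$.
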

Before we prove this theorem, we point out the following direct consequence. 
\begin{corollary}
\label{cor:classF_E_logO_n_tilde_with_RH}
Suppose that $g_\Theta\in \mathcal{F}(r,\vartheta,K)$ and $\theta_m r^m =\vartheta +O(m^{-\delta})$ for some $\delta>0$.
Then
\begin{align*}
\ET{\log \widetilde{O}_n}
&=
\frac{\vartheta}{2}\log^2(b_n) +
\vartheta \log(b_n)\bigl( \log(\vartheta \log(b_n))  -1\bigr)\nonumber\\
&+
\sum_\rho \Gamma(-\rho) (\vartheta\log(b_n)\bigr)^\rho
+
O\left((\log\log(n))^3 \right),
\end{align*}
where $\sum_\rho$ indicates the sum over the non-trivial zeros $\rho$ of Riemann zeta function.
\end{corollary}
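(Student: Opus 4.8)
The starting point is the exact identity \eqref{eq:E_tilde_On_exact} of Theorem~\ref{thm:classF_E_logO_n_tilde}, and the plan is to evaluate each of its three pieces under the extra hypothesis $\theta_m r^m=\vartheta+O(m^{-\delta})$. By the Euler summation estimate \eqref{eq:classF_add_assumption_sum1} in Remark~\ref{remark:classF_add_assumption_sum1}, the first sum of \eqref{eq:E_tilde_On_exact} equals $\tfrac{\vartheta}{2}\log^2(b_n)+O(1)$, the leading term, and the same estimate gives, with $S_k:=\sum_{m=1}^{b_n}\tfrac{\theta_m}{m}r^m\one_{\{k\mid m\}}$,
\[
S_k=\vartheta\,\frac{\log(b_n)}{k}+O\!\left(\frac{\log k}{k}\right)\qquad\text{uniformly for }1\le k\le\log^2(n).
\]
I would then merge the two remaining sums of \eqref{eq:E_tilde_On_exact} into $-\sum_{k\le\log^2(n)}\Lambda(k)\,g(S_k)$ with $g(x):=e^{-x}+x-1$, recording that $g(x)=O(x^2)$ as $x\to0$, $g(x)=x-1+O(e^{-x})$ as $x\to\infty$, and $0\le g'(x)\le1$ on $[0,\infty)$.

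Next, since $g$ is $1$-Lipschitz, replacing $S_k$ by $c/k$ with $c:=\vartheta\log(b_n)$ costs only $O\!\big(\sum_{k\le\log^2(n)}\Lambda(k)\tfrac{\log k}{k}\big)=O((\log\log n)^2)$ by partial summation from \eqref{eq:intro_Mangoldt_sum1}; and the tail $\sum_{k>\log^2(n)}\Lambda(k)g(c/k)=O\!\big(c^2\sum_{k>\log^2(n)}\Lambda(k)k^{-2}\big)=O(1)$, since $c=O(\log n)$. So, up to errors fitting inside the announced $O((\log\log n)^3)$, it remains to evaluate $\Sigma:=\sum_{k\ge1}\Lambda(k)\,g(c/k)$. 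For this I would use the Mellin transform: the Mellin transform of $g$ is $\Gamma(s)$ on the strip $-2<\Re s<-1$ (the standard continuation of $\int_0^\infty e^{-x}x^{s-1}\,dx$ after subtracting the first two Taylor coefficients of $e^{-x}$), so $g(c/k)=\tfrac{1}{2\pi i}\int_{(\sigma)}\Gamma(s)c^{-s}k^{s}\,ds$ for any $\sigma\in(-2,-1)$, and as $\sum_k\Lambda(k)k^{\sigma}$ converges absolutely for $\sigma<-1$, Fubini yields
\[
\Sigma=\frac{1}{2\pi i}\int_{(\sigma)}\Gamma(s)\,c^{-s}\left(-\frac{\zeta'(-s)}{\zeta(-s)}\right)ds .
\]

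I would then shift the line of integration to $\Re s=\sigma'$ for a large odd integer $\sigma'$ (to stay clear of the trivial zeros of $\zeta$). On $\Re s=\sigma'$ one has $|c^{-s}|=c^{-\sigma'}$, $\Gamma$ decays exponentially in $|\Im s|$, and $\zeta'/\zeta$ grows at most polynomially via the functional equation, so the shifted integral is $O(c^{-\sigma'})=o(1)$. The residues crossed on the way account for everything else: a \emph{double} pole at $s=-1$, born of the simple pole of $\Gamma$ and the simple pole of $\zeta(-s)$ (coming from the pole of $\zeta$ at $1$), with residue $-c(\log c-1)$, which — together with the sign of \eqref{eq:E_tilde_On_exact} — is responsible for the term $\vartheta\log(b_n)\bigl(\log(\vartheta\log(b_n))-1\bigr)$; a simple pole at $s=0$ with residue $-\log(2\pi)=O(1)$; a simple pole at $s=-\rho$ for every non-trivial zero $\rho$ of $\zeta$, with residue $\Gamma(-\rho)c^{\rho}$, producing $\sum_\rho\Gamma(-\rho)(\vartheta\log(b_n))^{\rho}$ (this series converges absolutely because $|\Gamma(-\rho)|$ decays like $e^{-\frac{\pi}{2}|\Im\rho|}$, which dominates the density $O(T\log T)$ of the zeros up to height $T$); and simple poles at $s=2,4,\dots$, whose total contribution is $O(c^{-2})=o(1)$. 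Reassembling these and returning to \eqref{eq:E_tilde_On_exact} gives the asserted expansion; this is, in effect, the analytic repackaging of the explicit formula \eqref{eq:Chebychev_asymp-exact} for $\psi$.

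The step I expect to be the main obstacle is exactly this contour analysis: one must (i) extract the residue at the \emph{double} pole $s=-1$ correctly — the $-1$ inside $\log(\vartheta\log(b_n))-1$ is produced by the cross terms of the Laurent expansions of $\Gamma(s)$, $-\zeta'(-s)/\zeta(-s)$ and $c^{-s}$ at $s=-1$; (ii) justify the rightward shift past the infinitely many zeros of $\zeta$ with a quantitative bound on the horizontal segments, which rests on the classical growth estimates for $\zeta'/\zeta$ off the zeros and for $\Gamma$ on vertical lines; and (iii) keep scrupulous track of which logarithm is in play, since $\log^2(n)$ and $\log^2(b_n)$ differ by a quantity of the same order as the second-order contribution, so the substitution $c=\vartheta\log(b_n)$ must be carried through consistently. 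The remaining bookkeeping — the replacement of $S_k$ by $c/k$, the truncation tails, and the $O(1)$ error already present in Theorem~\ref{thm:classF_E_logO_n_tilde} — is routine.
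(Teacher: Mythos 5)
Your proposal is correct and follows essentially the same route as the paper: start from the exact identity \eqref{eq:E_tilde_On_exact}, use the Euler--Maclaurin estimates of Remark~\ref{remark:classF_add_assumption_sum1} to reduce to $\sum_k \Lambda(k)\bigl(e^{-x_k}-1+x_k\bigr)$ with $x_k=\vartheta\log(b_n)/k$, represent $e^{-x}-1+x$ by its Mellin transform $\Gamma(s)$ on $-2<\Re s<-1$, recognize $\sum_k\Lambda(k)k^s=-\zeta'(-s)/\zeta(-s)$, and shift the contour to collect the double pole at $s=-1$, the simple pole at $s=0$, and the poles at the non-trivial zeros. The only cosmetic difference is that the paper shifts only to $\Re s=1/2$ (yielding the $O((\log b_n)^{-1/2})$ remainder) rather than far to the right past the trivial zeros, but this does not change the result.
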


 Assuming the Riemann hypothesis to be true, that is all the non-trivial zeros of the zeta function have the form $\varrho = 1/2 + it$, any sum $\sum_\varrho x^{\varrho}$ with $x \geq 0$ can be estimated as $\mathcal{O}(\sqrt{x})$. This leads to the implication $(1) \Rightarrow (2)$ in the following Corollary. Moreover, similar as for the Chebychev function \eqref{eq:Chebychev_asymp_RH}, we notice that the reverse implication is also true: if there would exist a zero of the zeta function of the form $\varrho = 1/2 + \delta +it$ with $\delta>0$, then we can deduce a contradiction for $\epsilon = \delta/2$. For more details we refer to the proof of \eqref{eq:Chebychev_asymp_RH} in \cite[Section II.4, Corollary 3.1]{Te95}.
\begin{corollary}
\label{cor:Equiv_RH_tilde}
Suppose that $g_\Theta\in \mathcal{F}(r,\vartheta,K)$ and $\frac{\theta_m}{m}r^m =\vartheta +O(m^{-\delta})$ for some $\delta>0$.
Then the following statements are equivalent 
\begin{enumerate}
 \item[\textup{(1)}]  The Riemann hypothesis is true.
 \item[\textup{(2)}]   We have for all $\epsilon>0$ 
\begin{align*}
\ET{\log \widetilde{O}_n}
= 
 \frac{\vartheta}{2}\log^2(b_n) +
\vartheta \log(b_n)\bigl(\log(\vartheta \log(b_n)) -1 \bigr)
+
O\left((\log(b_n))^{1/2+\epsilon} \right).
\end{align*}

\end{enumerate}
\end{corollary}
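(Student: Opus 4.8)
The plan is to deduce everything from the exact expansion in Corollary~\ref{cor:classF_E_logO_n_tilde_with_RH}. Since $b_n=n/\log^2(n)$ we have $\log(b_n)=\log(n)-2\log\log(n)$, so $(\log\log(n))^3=o\big((\log(b_n))^{1/2+\epsilon}\big)$ for every $\epsilon>0$, and the two explicit main terms $\tfrac{\vartheta}{2}\log^2(b_n)$ and $\vartheta\log(b_n)\big(\log(\vartheta\log(b_n))-1\big)$ of Corollary~\ref{cor:classF_E_logO_n_tilde_with_RH} are precisely those occurring in statement~(2). Writing $x_n\coloneqq\vartheta\log(b_n)$ and, for real $x>0$,
\[
G(x)\coloneqq\sum_{\rho}\Gamma(-\rho)\,x^{\rho},
\]
where $\sum_\rho$ runs over the non-trivial zeros of $\zeta$, Corollary~\ref{cor:classF_E_logO_n_tilde_with_RH} shows that statement~(2) is equivalent to $G(x_n)=O\big(x_n^{1/2+\epsilon}\big)$ for every $\epsilon>0$. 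Thus the corollary reduces to a growth statement for $G$, and the argument runs parallel to the classical equivalence between the Riemann hypothesis and \eqref{eq:Chebychev_asymp_RH}, see \cite[Section~II.4, Corollary~3.1]{Te95}. First one checks that the series defining $G$ is harmless analytically: by Stirling's formula $|\Gamma(-\rho)|$ decays faster than any power of $|\Im(\rho)|^{-1}$ (indeed like $e^{-\pi|\Im(\rho)|/2}$) as $|\Im(\rho)|\to\infty$, while the zero-counting bound $N(T)=O(T\log T)$ makes the ordinates of the zeros sufficiently sparse, so $\sum_\rho|\Gamma(-\rho)|<\infty$ and even $\sum_\rho|\rho\,\Gamma(-\rho)|<\infty$; consequently $G\in C^1((0,\infty))$ and, using $\Re(\rho)<1$, $|G'(x)|=O(1)$ for $x\ge1$.

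For $(1)\Rightarrow(2)$ I would simply note that under the Riemann hypothesis every non-trivial zero has $\Re(\rho)=\tfrac12$, so $|x_n^{\rho}|=x_n^{1/2}$ and
\[
|G(x_n)|\le x_n^{1/2}\sum_{\rho}|\Gamma(-\rho)|=O\big((\log(b_n))^{1/2}\big)=O\big((\log(b_n))^{1/2+\epsilon}\big);
\]
inserting this and the bound on the $(\log\log(n))^3$-term into Corollary~\ref{cor:classF_E_logO_n_tilde_with_RH} gives~(2).

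For $(2)\Rightarrow(1)$ I would argue directly. Assume~(2) and fix an arbitrary $\epsilon>0$, so $G(x_n)=O\big(x_n^{1/2+\epsilon}\big)$. Since $x_{n+1}-x_n=\vartheta\big(\log(b_{n+1})-\log(b_n)\big)=O(1/n)$, which is exponentially small in $x_n$, and $|G'(x)|=O(1)$ for $x\ge1$, every real $x$ lies within $o(1)$ of some $x_n$; hence $G(x)=O\big(x^{1/2+\epsilon}\big)$ for all $x\ge1$. Then $\int_1^\infty G(x)\,x^{-s-1}\,dx$ converges absolutely and is holomorphic on $\Re(s)>\tfrac12+\epsilon$; integrating the series for $G$ term by term (legitimate for $\Re(s)$ large) this integral equals $H(s)\coloneqq\sum_\rho\Gamma(-\rho)/(s-\rho)$, which is meromorphic on $\C$ with a simple pole at each non-trivial zero $\rho$ of residue $\Gamma(-\rho)\neq0$ (as $\Gamma$ has no zeros). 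By the identity theorem $H$ can therefore have no pole with $\Re(s)>\tfrac12+\epsilon$, i.e.\ $\zeta$ has no zero with $\Re(\rho)>\tfrac12+\epsilon$. Since $\epsilon>0$ was arbitrary and the non-trivial zeros are symmetric about the line $\Re(s)=\tfrac12$, this is the Riemann hypothesis; equivalently, a zero $\rho_0$ with $\Re(\rho_0)=\tfrac12+\delta$, $\delta>0$, already contradicts~(2) for $\epsilon=\delta/2$.

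The main obstacle is the $(2)\Rightarrow(1)$ direction, and inside it the identification of $\int_1^\infty G(x)\,x^{-s-1}\,dx$ with the meromorphic function $H$ together with the analytic continuation step — this is exactly the mechanism underlying the explicit formula \eqref{eq:Chebychev_asymp-exact} and the equivalence \eqref{eq:Chebychev_asymp_RH}. In practice one either rewrites $G$ (up to elementary terms) as a contour integral of $-\zeta'/\zeta$ as in \cite[Section~II.4.3]{Te95} and quotes the argument of \cite[Section~II.4, Corollary~3.1]{Te95} essentially verbatim, or invokes a general Landau oscillation principle. The remaining ingredients — convergence of $\sum_\rho|\Gamma(-\rho)|$, absorption of the $(\log\log(n))^3$ error, and the passage from the sequence $(x_n)$ to all real $x$ — are routine.
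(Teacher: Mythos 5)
Your proposal is correct, and both directions rest on the same mechanism the paper has in mind. For $(1)\Rightarrow(2)$ you argue exactly as the paper does in the remark preceding the corollary: under RH each term of $\sum_\rho\Gamma(-\rho)x^\rho$ has modulus $|\Gamma(-\rho)|\,x^{1/2}$, and the exponential decay of $\Gamma$ on vertical strips together with $N(T)=O(T\log T)$ makes $\sum_\rho|\Gamma(-\rho)|$ finite, so the zero-sum in Corollary~\ref{cor:classF_E_logO_n_tilde_with_RH} is $O(\sqrt{\log b_n})$ and the $(\log\log n)^3$ error is absorbed. For $(2)\Rightarrow(1)$ the paper only gestures at the classical argument for \eqref{eq:Chebychev_asymp_RH} and cites Tenenbaum; as you rightly sense, the naive reading ("the single term $\Gamma(-\rho_0)x^{\rho_0}$ is too big") is not a proof, because infinitely many terms could conspire to cancel. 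Your Mellin-transform argument closes this gap cleanly and, pleasantly, more directly than the textbook route: since $G(x)=\sum_\rho\Gamma(-\rho)x^\rho$ is itself an absolutely convergent sum over zeros, its transform is explicitly $H(s)=\sum_\rho\Gamma(-\rho)/(s-\rho)$, a meromorphic function with a nonremovable pole at every zero (residue $m_\rho\Gamma(-\rho)\neq0$), so no rewriting in terms of $-\zeta'/\zeta$ is needed; the bound $G(x)=O(x^{1/2+\epsilon})$ forces $H$ to be holomorphic on $\Re(s)>1/2+\epsilon$ and hence excludes zeros there, and the functional-equation symmetry finishes. The two auxiliary steps you flag — passing from the sequence $x_n=\vartheta\log b_n$ to all real $x\geq1$ via $|G'|=O(1)$ and $x_{n+1}-x_n=O(1/n)$, and justifying term-by-term integration for $\Re(s)>1$ — are both sound. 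In short: same skeleton as the paper, but your write-up of the converse direction is a genuine, self-contained proof where the paper offers only a citation.
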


Let us now deduce Corollary~\ref{cor:classF_E_logO_n_tilde_with_RH} from Theorem~\ref{thm:classF_E_logO_n_tilde}.
\begin{proof}[Proof of Corollary~\ref{cor:classF_E_logO_n_tilde_with_RH}]

 Recall the estimates in Remark~\ref{remark:classF_add_assumption_sum1}. Then
 \begin{align*}
  \ET{\log \widetilde{O}_n}
  &= 
   \frac{\vartheta}{2}\log^2(b_n) 
  -
\sum_{k=1}^{\log^2(n)}  \Lambda(k)\left(e^{-\vartheta \frac{\log(b_n)}{k} } -1+ \vartheta \frac{\log(b_n)}{k}\right) \\
&+ O\left( \sum_{k=1}^{\log^2(n)}\Lambda(k)\frac{\log(k)}{k}\right).
 \end{align*}
Since $\Lambda(k) \leq \log(k)$, the sum over the error term is of order
\begin{align*}
&\sum_{k=1}^{\log^2(n)}  \Lambda(k) O\left(\frac{\log(k)}{k}\right)
=
O\left(\sum_{k=1}^{\log^2(n)}  \frac{\log^2(k)}{k}\right)
=
O\left((\log\log(n) )^3 \right)
\end{align*}
and thus can be neglected with respect to the scale of the problem. Now consider the sum 
\begin{align*}
 \sum_{k=1}^{\log^2(n)} \Lambda(k)( e^{-x_k} -1+x_k) 
 \quad \text{ with   } x_k\coloneqq \frac{\vartheta}{k}\log(b_n).
\end{align*}
Since $e^{-x} -1+ x = O(x^2) $ as $x\to 0$, \eqref{eq:intro_Mangoldt_sum3} yields
\begin{align*}
 \sum_{k=1}^{\log^2(n)} \Lambda(k)( e^{-x_k} -1+x_k) 
= 
 \sum_{k=1}^{\infty} \Lambda(k)( e^{-x_k} -1+x_k) +O(1).
\end{align*}

Recall that the Mellin transform of the function $e^{-x}$ is $\Gamma(s)$ for $\Re(s)>0$. 
Then the inverse Mellin transform gives
\begin{align}
\label{eq:Mellin_GAMMA}
e^{-x} -1+ x = \frac{1}{2\pi i}\int_{c-i\infty}^{c+i\infty} \Gamma(s) x^{-s} \, ds
\end{align}
for $-2< c <-1$. Details about the Mellin transform can be found for instance in \cite{DuFlGo95}, but here we will only need \eqref{eq:Mellin_GAMMA}.
Then
\begin{align*}
 \sum_{k=1}^{\infty} \Lambda(k)( e^{-x_k} -1+x_k)  
=
\frac{1}{2\pi i}\int_{c-i\infty}^{c+i\infty} \Gamma(s)\bigl(\vartheta\log(b_n)\bigr)^{-s} \sum_{k=1}^{\infty} \Lambda(k)k^s  \, ds.
\end{align*}
We need to justify the change of the order of summation and integration. Notice that on the line of integration 
\begin{align*}
\left| \bigl(\vartheta\log(b_n)\bigr)^{-s} \sum_{k=1}^{\infty} \Lambda(k)k^s \right|
\leq 
\bigl(\vartheta\log(b_n)\bigr)^{-c} \sum_{k=1}^{\infty} \Lambda(k)k^c < \infty
\end{align*}
holds and thus the change of order is valid by dominated convergence.
Denote by $\sum_p$ the sum over all prime numbers. It then follows by the definition of the von Mangoldt function $\Lambda$, see \eqref{eq:intro_Mangoldt}, that we have for $\Re(s)<-1$
\begin{align*}
\sum_{k=1}^{\infty} \Lambda(k)k^s
=
\sum_{p} \log(p) \sum_{j=1}^\infty p^{js}
=
\sum_{p} \log(p) \frac{p^s}{1-p^s}
=
-\frac{\zeta'(-s)}{\zeta(-s)},
\end{align*}
where $\zeta(s)$ denotes the Riemann zeta function. The last equality can easily be deduced form the Euler product formula of $\zeta(s)$. Therefore, 
\begin{align*}
 \sum_{k=1}^{\infty} \Lambda(k)( e^{-x_k} -1+x_k)  
=
-\frac{1}{2\pi i}\int_{c-i\infty}^{c+i\infty} \Gamma(s)\bigl(\vartheta \log(b_n)\bigr)^{-s}\frac{\zeta'(-s)}{\zeta(-s)} \, ds.
\end{align*}
Apply now the residue theorem to shift the line of integration to $1/2+ iy$ with $y\in\R$, which gives a double pole at $s=-1$ and simple pole at $s=0$ and at the zeros of the zeta function. This yields
\begin{align*}
\sum_{k=1}^{\infty} \Lambda(k)( e^{-x_k} -1+x_k)  
& =
\vartheta \log(b_n)\bigl( 1-\log(\vartheta \log(b_n)) \bigr)
-
\sum_\rho \Gamma(-\rho) (\vartheta\log(b_n)\bigr)^\rho \\
&
-\log(2\pi) + O\left((\log(b_n))^{-\frac{1}{2}}\right).
\end{align*}
This completes the proof.
\end{proof}

It remains to prove Theorem~\ref{thm:classF_E_logO_n_tilde}. Recall that
$\log \widetilde{O}_n = \log \widetilde{Y}_n - \widetilde{\Delta}_n $
and that $\mathbb{E}_{\Theta}[\log \widetilde{Y}_n]$ was computed in Lemma~\ref{lem:E_tilde_Yn}. Unfortunately, the estimate given in \eqref{eq:classF_Delta_tilde} is not strong enough to deduce  Theorem~\ref{thm:classF_E_logO_n_tilde}, so that we need to compute $\mathbb{E}_{\Theta}[\widetilde{\Delta}_n]$ more precisely. We need to study the behavior of $\widetilde{D}_{nk}$ and $\widetilde{D}_{nk}^*$, which are defined in \eqref{eq:log_tildeYn_von_mangold} and \eqref{eq:log_tildeOn_von_mangold}. 
\begin{lemma}
\label{lem:classF_generating_uDnk_tilde}
For $k\in\N$ and $u\in\C$ the following holds:
\begin{enumerate}
\item[\textup{(1)}]  $ \displaystyle \ET{u^{\widetilde{D}_{nk}}} 
= 
 \frac{1}{h_n} \nth{\exp\left(g_\Theta(t) +(u-1) \widetilde{g}_{\Theta,k}(t) \right)},$ \\
\item[\textup{(2)}]  $ \displaystyle \ET{\widetilde{D}_{nk}} 
= \frac{1}{h_n} \nth{ \widetilde{g}_{\Theta,k}(t)\exp\left(g_\Theta(t) \right)},$\\
\item[\textup{(3)}]  $ \displaystyle \mathbb{P}_{\Theta}\big[\widetilde{D}_{nk}^*=0 \big]
    =\frac{1}{h_n} \nth{\exp\left(g_\Theta(t)- \widetilde{g}_{\Theta,k}(t) \right)},$
\end{enumerate}
where
\begin{align}
\label{eq:def:g_T_k_tilde}
 \widetilde{g}_{\Theta,k}(t)
=
\sum_{m=1}^{b_n} \frac{\theta_m}{m}\one_{\{k|m\}} t^m.
\end{align}
%
\end{lemma}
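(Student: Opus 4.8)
The plan is to obtain all three identities from the cycle index theorem (Lemma~2.3 in \cite{StZe14a}), proceeding exactly as in the proof of Lemma~\ref{lem:classF_generating_E_log_Y_tilde}. The starting observation is that, since $\widetilde{D}_{nk}=\sum_{m=1}^{b_n}C_m\one_{\{k|m\}}$, the quantity $u^{\widetilde{D}_{nk}}$ equals a product $\prod_m a_m^{C_m}$ in which $a_m=u$ whenever $m\le b_n$ and $k\mid m$, and $a_m=1$ otherwise. To handle the formal power series cleanly, I would first fix an integer $c$, put $D_{nk}^{(c)}\coloneqq\sum_{m=1}^{c}C_m\one_{\{k|m\}}$, and apply the cycle index theorem with weights $u^{\one_{\{k|m\}}}\theta_m$ for $m\le c$ and $\theta_m$ for $m>c$ to get, as formal power series in $t$,
\begin{align*}
\sum_{n=0}^\infty h_n t^n\,\ET{u^{D_{nk}^{(c)}}}
&=\exp\left(\sum_{m=1}^{c}u^{\one_{\{k|m\}}}\frac{\theta_m}{m}t^m+\sum_{m>c}\frac{\theta_m}{m}t^m\right)\\
&=\exp\left(g_\Theta(t)+(u-1)\sum_{\substack{1\le m\le c\\ k|m}}\frac{\theta_m}{m}t^m\right),
\end{align*}
using \eqref{eq:def_g_theta}. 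Identifying the coefficients of $t^n$ on both sides and then specialising $c=b_n$ --- so that $D_{nk}^{(b_n)}=\widetilde{D}_{nk}$ and the finite sum coincides with $\widetilde{g}_{\Theta,k}(t)$ of \eqref{eq:def:g_T_k_tilde} --- gives $(1)$.

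Parts $(2)$ and $(3)$ are then routine specialisations of $(1)$. For $(3)$, I would note that $\widetilde{D}_{nk}^*=\min\{1,\widetilde{D}_{nk}\}$ vanishes exactly when $\widetilde{D}_{nk}=0$, so that $\PT{\widetilde{D}_{nk}^*=0}=\PT{\widetilde{D}_{nk}=0}$ is read off by setting $u=0$ in $(1)$ (with the convention $0^0=1$). For $(2)$, I would differentiate the identity of $(1)$ in $u$ and evaluate at $u=1$, which turns the left-hand side into $\ET{\widetilde{D}_{nk}}$ and the right-hand side into $\frac{1}{h_n}\nth{\widetilde{g}_{\Theta,k}(t)\exp(g_\Theta(t))}$. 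Both manipulations are valid because, for each fixed $n$, the random variable $\widetilde{D}_{nk}$ is bounded by $n$ on the finite group $\Sn$, hence $u\mapsto\ET{u^{\widetilde{D}_{nk}}}$ is a genuine polynomial in $u$.

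This is essentially a variant of Lemma~\ref{lem:classF_generating_E_log_Y_tilde} and presents no real obstacle. The single point that requires care --- and the reason for introducing the auxiliary truncation $c$ --- is that the cut-off $b_n$ depends on $n$, so the generating identity cannot be written with $b_n$ in place of $c$ directly; the substitution $c=b_n$ has to be carried out only after the $t^n$-coefficient has been extracted.
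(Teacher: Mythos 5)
Your proof is correct and takes essentially the same route as the paper: the authors prove $(1)$ by exactly the cycle-index computation of Lemma~\ref{lem:classF_generating_E_log_Y_tilde} (with the same truncation device), and deduce $(2)$ and $(3)$ by differentiating in $u$ and by setting $u=0$, respectively. (The paper's one-line proof says $(2)$ follows by ``differentiation with respect to $u$ and substituting $u=0$''; this is a typo, and your evaluation at $u=1$ is the correct substitution.)
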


\begin{proof}
Equation $(1)$ follows with a similar computation as in the proof of Lemma~\ref{lem:classF_generating_E_log_Y_tilde} and we thus omit it. Assertion
$(2)$ then follows from $(1)$ by differentiation with respect to $u$ and substituting $u=0$ and $(3)$ by substituting $u=0$ in $(1)$.
\end{proof}

The previous lemma implies
\begin{lemma}
\label{lem:classF_asymptotic_Dk_tilde}
Let $g_{\Theta}(t) \in \mathcal{F}(r,\vartheta,K)$. We then have for $2\leq k\leq n$ 
%
%

\begin{enumerate}
\item[\textup{(1)}]  $ \displaystyle \ET{\widetilde{D}_{nk}}
= \sum_{m=1}^{b_n}\frac{\theta_m}{m}r^m\one_{\{k|m\}} + O\Big(\frac{b_n}{nk}\Big),$ \\
\item[\textup{(2)}]  $\displaystyle \PT{\widetilde{D}_{nk}^*=0}
= \exp\left(-\sum_{m=1}^{b_n}\frac{\theta_m}{m}r^m\one_{\{k|m\}}\right) + O\Big(\frac{b_n}{nk}\Big).$
\end{enumerate}
\end{lemma}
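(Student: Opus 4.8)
Both parts follow from Lemma~\ref{lem:classF_generating_uDnk_tilde}, but only (2) genuinely needs a generating function: (1) follows from the one-point formula. Indeed, exactly as in the proof of Proposition~\ref{prop:bound_Dnk} (Lemma 2.5 in \cite{StZe14a}),
\[
\ET{\widetilde D_{nk}}=\sum_{m=1}^{b_n}\ET{C_m}\one_{\{k \mid m\}}=\sum_{\substack{m=1\\ k\mid m}}^{b_n}\frac{\theta_m}{m}\,\frac{h_{n-m}}{h_n}.
\]
Since $m\le b_n=n/\log^2(n)$ forces $n-m\ge n/2$, Corollary~\ref{cor:haviour_of_hn} gives $h_{n-m}/h_n=r^m(1-m/n)^{\vartheta-1}(1+O(n^{-1}))=r^m(1+O(m/n))$ with an implied constant depending only on $\vartheta$; the truncation is precisely what keeps us away from the edge $m\approx n$, so no case distinction $\vartheta\gtrless 1$ is needed. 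Using $\theta_m r^m=\vartheta+\eps_m=O(1)$ from \eqref{eq:classF_theta_k_to_vartheta} and that there are at most $b_n/k$ multiples of $k$ in $\{1,\dots,b_n\}$, one gets
\[
\ET{\widetilde D_{nk}}=\sum_{\substack{m=1\\ k\mid m}}^{b_n}\frac{\theta_m r^m}{m}+O\Big(\frac1n\sum_{\substack{m\le b_n\\ k\mid m}}\theta_m r^m\Big)=\sum_{\substack{m=1\\ k\mid m}}^{b_n}\frac{\theta_m r^m}{m}+O\Big(\frac{b_n}{nk}\Big),
\]
uniformly in $2\le k\le n$ (the range $k>b_n$ being trivial since then $\widetilde D_{nk}\equiv 0$).

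\textbf{Proof of (2).} Start from Lemma~\ref{lem:classF_generating_uDnk_tilde}(3), write
\[
h_n\,\PT{\widetilde D_{nk}^*=0}=\frac{1}{2\pi i}\int_\gamma \exp\bigl(g_\Theta(t)-\widetilde g_{\Theta,k}(t)\bigr)\,\frac{dt}{t^{n+1}},
\]
and run the contour argument from the proof of Lemma~\ref{lem:E_tilde_Yn} verbatim, with $\gamma=\gamma_1\cup\gamma_2\cup\gamma_3\cup\gamma_4$ as in Figure~\ref{fig:curve_flajolet_4} (admissible because $\widetilde g_{\Theta,k}$ is a polynomial, so no additional analyticity is required). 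On the outer circle $\gamma_4$ of radius $R'=r(1+b_n^{-1})$ one has $|\widetilde g_{\Theta,k}(R'e^{i\varphi})|\le\sum_{m\le b_n,\,k\mid m}\tfrac{\theta_m}{m}(R')^m=O(\log(b_n)/k)$, using $\theta_m(R')^m=\theta_m r^m(1+b_n^{-1})^m=O(1)$; combined with $|g_\Theta(R'e^{i\varphi})|\le\vartheta\log(b_n)+O(1)$ and $(R')^{-n}=O(r^{-n}e^{-\log^2(n)})$, the $\gamma_4$-contribution is $O(b_n^{O(1)}r^{-n}e^{-\log^2(n)})$, which is negligible against $h_n\sim e^K n^{\vartheta-1}(\Gamma(\vartheta)r^n)^{-1}$. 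On $\gamma_1\cup\gamma_2\cup\gamma_3$ I substitute $t=r(1+w/n)$ with $w=O(\log^2(n))$; as in \eqref{eq_q_b_at_r},
\[
\widetilde g_{\Theta,k}\Big(r+\frac{rw}{n}\Big)=\sum_{\substack{m=1\\ k\mid m}}^{b_n}\frac{\theta_m r^m}{m}+O\Big(\frac{w}{n}\sum_{\substack{m\le b_n\\ k\mid m}}\theta_m r^m\Big)=S_{n,k}+O\Big(\frac{w b_n}{nk}\Big),\qquad S_{n,k}\coloneqq\sum_{\substack{m=1\\ k\mid m}}^{b_n}\frac{\theta_m r^m}{m},
\]
and on the contour $w b_n/(nk)=O(k^{-1})$ is bounded, so $\exp(-\widetilde g_{\Theta,k}(r+rw/n))=e^{-S_{n,k}}\bigl(1+O(w b_n/(nk))\bigr)$.

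Inserting this together with the local expansion $g_\Theta(t)=\vartheta\log(1/(1-t/r))+K+O(t-r)$ and $t^{-n-1}\,dt$, and then deforming the bounded curve $\gamma'$ to the Hankel contour $\gamma''$ with $\tfrac1{2\pi i}\int_{\gamma''}(-w)^{-\vartheta}e^{-w}\,dw=\Gamma(\vartheta)^{-1}$ exactly as in Lemma~\ref{lem:E_tilde_Yn} (all contour integrals involved being absolutely convergent), one obtains
\[
\frac1{2\pi i}\int_{\gamma_1\cup\gamma_2\cup\gamma_3}\exp\bigl(g_\Theta(t)-\widetilde g_{\Theta,k}(t)\bigr)\,\frac{dt}{t^{n+1}}=\frac{n^{\vartheta-1}e^K}{r^n\Gamma(\vartheta)}\,e^{-S_{n,k}}\Big(1+O\big(\tfrac{b_n}{nk}\big)+O\big(\tfrac1n\big)\Big),
\]
where the $O(b_n/(nk))$ comes from integrating $(-w)^{-\vartheta}e^{-w}\cdot O(w b_n/(nk))$ and the $O(1/n)$ from the Jacobian and the $O(t-r)$ remainder. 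Dividing by $h_n$ and using $e^{-S_{n,k}}\le 1$ gives assertion (2). The bookkeeping above is routine; the one point needing a little care is the outer-circle step in (2), where the extra factor $\exp(-\widetilde g_{\Theta,k})$ introduces a power of $b_n$, and one has to check — via $\theta_m(R')^m=O(1)$ and the specific choice $b_n=n/\log^2(n)$ — that this polynomial growth is comfortably killed by the super-polynomially small $(R')^{-n}$, and likewise that $w b_n/(nk)$ stays bounded on $\gamma'$ so that $\exp(-\widetilde g_{\Theta,k}(r+rw/n))$ can be linearised.
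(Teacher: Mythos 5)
Your proof of part (2) is correct and is essentially the paper's own argument: the paper also reduces to the contour of Lemma~\ref{lem:E_tilde_Yn}, replaces $\widetilde{q}_1$ by $\widetilde{g}_{\Theta,k}$, and uses precisely the expansion $\widetilde{g}_{\Theta,k}(r(1+w/n))=\sum_{m\le b_n}\frac{\theta_m}{m}r^m\one_{\{k|m\}}+O(wb_n/(nk))$ that you derive; your additional remarks (boundedness of $wb_n/(nk)$ on the contour so that the exponential can be linearised, and the outer-circle factor $\exp(|\widetilde{g}_{\Theta,k}|)=b_n^{O(1/k)}$ being killed by $(R')^{-n}$) are exactly the checks the paper leaves implicit. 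You also correctly dispose of the range $k>b_n$, where both sides are trivial.

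For part (1) you deviate from the paper: the paper runs the same Cauchy-integral/Hankel-contour computation with the factor $\widetilde{g}_{\Theta,k}(t)$ in front of $\exp(g_\Theta(t))$, whereas you use the exact one-point formula $\ET{C_m}=\frac{\theta_m}{m}\,h_{n-m}/h_n$ together with Corollary~\ref{cor:haviour_of_hn}. Your route is sound and arguably cleaner: because of the truncation at $b_n=n/\log^2(n)$ one always has $n-m\ge n/2$, so $h_{n-m}/h_n=r^m(1+O(m/n))$ uniformly and no case distinction on $\vartheta$ (as in Proposition~\ref{prop:bound_Dnk}) is needed; summing over the at most $b_n/k$ multiples of $k$ with $\theta_mr^m=O(1)$ gives the stated $O(b_n/(nk))$. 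What the elementary route buys is independence from the contour machinery for the expectation; what the paper's uniform treatment buys is that (1) and (2) come out of one and the same computation. Either way the result and error term agree, so I have no objection.
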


\begin{proof}
For $b_n< k\leq n$ we have $\widetilde{D}_{nk}\equiv \widetilde{D}^*_{nk} \equiv 0$ and thus equation (1) and (2) are valid. We thus only have to consider $2\leq k\leq b_n$.
The proof is very similar to the proof of Lemma~\ref{lem:E_tilde_Yn}, including the contour of integration. 
One only has to replace $\widetilde{q}_1(t)$ by $\widetilde{g}_{\Theta,k}(t)$ and to use 
\begin{align*}
\widetilde{g}_{\Theta,k}\left(1+\frac{w}{n}\right)
=
\sum_{m=1}^{b_n}\frac{\theta_m}{m}r^m\one_{\{k|m\}} + O\Big(\frac{w b_n}{n k}\Big)
\end{align*}
for $w= O(\log^2(n))$. All other computations are identical and we thus omit them.
\end{proof}
%
%

\begin{proof}[Proof of Theorem~\ref{thm:classF_E_logO_n_tilde}]
Lemma~\ref{lem:E_tilde_Yn} gives us the behavior of $\mathbb{E}_{\Theta}[\log \widetilde{Y}_n]$.
It is thus enough to compute the expected value of $\widetilde{\Delta}_n = \log\widetilde{Y}_n-\log\widetilde{O}_n$. Equations  \eqref{eq:log_tildeYn_von_mangold} and \eqref{eq:log_tildeOn_von_mangold} yield
\begin{align}\label{eq:exp_logO_tilde_Delta_tilde}
\ET{\widetilde{\Delta}_n} 
=
\sum_{k=1}^{n} \Lambda(k)\ET{\widetilde{D}_{nk} - \widetilde{D}_{nk}^*}.
\end{align}
Denote $\alpha \coloneqq \lfloor \log^2 (n) \rfloor$ and consider the two sets 
$S_1\coloneqq\{1\leq k \leq \alpha\}$ and $S_2\coloneqq\{\alpha<k\leq n\}$. 
We split the sum according to the two sets and show first that the second sum is negligible.
Indeed, by Proposition~\ref{prop:bound_Dnk} and \eqref{eq:intro_Mangoldt_sum3},
\begin{align*}
 \sum_{k \in S_2 } \Lambda(k) \ET{\widetilde{D}_{nk} - \widetilde{D}_{nk}^*}
&=\,
O\left( \sum_{k\in S_2 } \Lambda(k)\ET{\widetilde{D}_{nk}(\widetilde{D}_{nk}-1)}  \right)\nonumber\\
&=\,
O\left( \sum_{k\in S_2 } \Lambda(k)\ET{D_{nk}(D_{nk}-1)}  \right)\nonumber\\
&=
O\bigg(\log^2(n) \sum_{k\in S_2} \frac{\Lambda(k)}{k^2}  \bigg)
=O(1).
\end{align*}
It is thus sufficient to consider the sum over the set $S_1$. 
Lemma~\ref{lem:classF_asymptotic_Dk_tilde} then yields for $k\leq \log^2 (n)$
\begin{align*}
&\ET{\widetilde{D}_{nk} - \widetilde{D}_{nk}^*} 
=
\ET{\widetilde{D}_{nk}} -1+ \PT{\widetilde{D}_{nk}^*=0} \\
=&
\exp\left(-\sum_{m=1}^{b_n}\frac{\theta_m}{m}r^m\one_{\{k|m\}}\right) -1+ \sum_{m=1}^{b_n}\frac{\theta_m}{m}r^m\one_{\{k|m\}}  + \,O\Big(\frac{b}{nk}\Big).
\end{align*}
Since $\Lambda(k) \leq \log(k)$, the sum over the error term is of order
\begin{align*}
&\frac{b_n}{n}\sum_{k\in S_1} O\left(\frac{\Lambda(k)}{k}\right)
=
O\left(\frac{(\log\log(n))^2}{\log^2(n)} \right)
=
O\left(\frac{1}{\log(n)} \right).
\end{align*}
Altogether, we proved that 
\begin{align*}
 \ET{\widetilde{\Delta}_n}
 =
 \sum_{k=1}^{\log^2(n)} \Lambda(k) \left(e^{-\sum_{m=1}^{b_n}\frac{\theta_m}{m}r^m\one_{\{k|m\}}} -1+ \sum_{m=1}^{b_n}\frac{\theta_m}{m}r^m\one_{\{k|m\}}\right)
 +
 O(1).
\end{align*}
Using the definition of $\Delta_n$ and Lemma~\ref{lem:E_tilde_Yn} completes the proof.
\end{proof}

\vskip 15pt 
\subsection{Expected value of $\log O_n$}
\label{subsection:classF_E_On}

We provide in this section a precise expansion of the expected value of $\log O_n$ which has in particular an interpretation in terms of the Riemann hypothesis. 
In this section we require additional assumptions on the function $g_{\Theta}$, namely that $g_{\Theta} \in \mathcal{LF}(r,\vartheta)$, which will be defined in Definition~\ref{def:Delta0K}. For this class of functions we will prove the following
\begin{theorem}\label{thm:classF_E_logO_n}
Suppose that $g_\Theta\in \mathcal{LF}(r,\vartheta)$. Then
\begin{align}
 \ET{\log O_n}
&= \ET{\log Y_n}
-
\vartheta \log(n)\bigl( 1-\log(\vartheta \log(n)) \bigr)\nonumber\\
&+
\sum_\rho \Gamma(-\rho) (\vartheta\log(n)\bigr)^\rho
+
O\left((\log\log(n))^3 \right).
\label{eq:E_On_exact}
\end{align}
\end{theorem}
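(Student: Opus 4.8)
The plan is to run the same argument as in the proof of Theorem~\ref{thm:classF_E_logO_n_tilde}, but with the un-truncated quantities $D_{nk}$, $D_{nk}^*$ in place of $\widetilde D_{nk}$, $\widetilde D_{nk}^*$; the role of the stronger hypothesis $g_\Theta\in\mathcal{LF}(r,\vartheta)$ (Definition~\ref{def:Delta0K}) is precisely to supply the analytic input that was free of charge in the truncated case. Writing $\log O_n=\log Y_n-\Delta_n$ with $\Delta_n=\sum_{k\le n}\Lambda(k)(D_{nk}-D_{nk}^*)$ as in \eqref{eq:intro_Delta_n}, it suffices to prove that $\ET{\Delta_n}=\vartheta\log(n)\bigl(1-\log(\vartheta\log(n))\bigr)-\sum_\rho\Gamma(-\rho)(\vartheta\log(n))^\rho+O\bigl((\log\log(n))^3\bigr)$ and then substitute into $\ET{\log O_n}=\ET{\log Y_n}-\ET{\Delta_n}$.

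First I would establish the un-truncated analogues of Lemma~\ref{lem:classF_generating_uDnk_tilde} and Lemma~\ref{lem:classF_asymptotic_Dk_tilde}. With $g_{\Theta,k}(t)\coloneqq\sum_{m\ge1}\frac{\theta_m}{m}\one_{\{k | m\}}t^m$ the same cycle-index computation gives $\ET{u^{D_{nk}}}=h_n^{-1}[t^n]\exp\bigl(g_\Theta(t)+(u-1)g_{\Theta,k}(t)\bigr)$, hence closed expressions for $\ET{D_{nk}}$ (differentiate at $u=1$) and for $\PT{D_{nk}^*=0}$ (set $u=0$). The key point is that $g_\Theta\in\mathcal{LF}(r,\vartheta)$ guarantees that $g_{\Theta,k}$ extends holomorphically to a fixed $\Delta_0$-domain with growth near $r$ controlled uniformly in $k$; a singularity analysis along the contour of Lemma~\ref{lem:E_tilde_Yn} then yields, uniformly for $2\le k\le\log^2(n)$, that $\ET{D_{nk}}=\sum_{m\le n}\frac{\theta_m}{m}r^m\one_{\{k | m\}}+(\text{error})$ and $\PT{D_{nk}^*=0}=\exp\bigl(-\sum_{m\le n}\frac{\theta_m}{m}r^m\one_{\{k | m\}}\bigr)+(\text{error})$, with errors negligible after weighting by $\Lambda(k)$ and summing over $k\le\log^2(n)$. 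The tail $\sum_{k>\log^2(n)}\Lambda(k)\ET{D_{nk}-D_{nk}^*}$ is $O(1)$: use $D_{nk}-D_{nk}^*\le D_{nk}(D_{nk}-1)$, Proposition~\ref{prop:bound_Dnk}(2) and \eqref{eq:intro_Mangoldt_sum3}, exactly as in the proof of Theorem~\ref{thm:classF_E_logO_n_tilde}. For $k\le\log^2(n)$ one has $\ET{D_{nk}-D_{nk}^*}=\ET{D_{nk}}-1+\PT{D_{nk}^*=0}=e^{-x_k}-1+x_k+(\text{error})$ with $x_k=\sum_{m\le n}\frac{\theta_m}{m}r^m\one_{\{k | m\}}$, and by \eqref{eq:intro_euler_maclaurin} together with the expansion of $\theta_m r^m$ available under the $\mathcal{LF}$-hypothesis one gets $x_k=\vartheta\log(n)/k+O(\log(k)/k)$; since $x\mapsto e^{-x}-1+x$ is $1$-Lipschitz on $[0,\infty)$, the $O(\log(k)/k)$ correction contributes at most $O\bigl(\sum_{k\le\log^2(n)}\log^2(k)/k\bigr)=O\bigl((\log\log(n))^3\bigr)$ to $\ET{\Delta_n}$.

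It then remains to compute $\sum_{k\ge1}\Lambda(k)(e^{-x_k}-1+x_k)$ with $x_k=\vartheta\log(n)/k$, the extension from $k\le\log^2(n)$ to all $k$ costing only $O(1)$ because $e^{-x}-1+x=O(x^2)$ and by \eqref{eq:intro_Mangoldt_sum3}. This is precisely the Mellin--residue computation carried out in the proof of Corollary~\ref{cor:classF_E_logO_n_tilde_with_RH}, with $\log(b_n)$ replaced by $\log(n)$: insert the representation \eqref{eq:Mellin_GAMMA}, interchange sum and integral by absolute convergence on the line $\Re(s)=c\in(-2,-1)$, use $\sum_{k\ge1}\Lambda(k)k^s=-\zeta'(-s)/\zeta(-s)$, and shift the contour to $\Re(s)=1/2$. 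The double pole at $s=-1$ produces $\vartheta\log(n)\bigl(1-\log(\vartheta\log(n))\bigr)$, the simple poles at the non-trivial zeros $\rho$ of $\zeta$ produce $-\sum_\rho\Gamma(-\rho)(\vartheta\log(n))^\rho$, the simple pole at $s=0$ produces $-\log(2\pi)=O(1)$, and the shifted integral is $O\bigl((\log(n))^{-1/2}\bigr)$. Collecting these terms gives the asserted expression for $\ET{\Delta_n}$, and hence \eqref{eq:E_On_exact}.

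\textbf{Main obstacle.} The delicate step is the first one: obtaining the un-truncated analogue of Lemma~\ref{lem:classF_asymptotic_Dk_tilde} with error terms uniform in $k$ up to $k=\log^2(n)$. In the truncated setting $\widetilde g_{\Theta,k}$ is a polynomial, so the enlarged contour of Lemma~\ref{lem:E_tilde_Yn} applies with no further analytic assumption; here one genuinely needs $g_{\Theta,k}(t)=\sum_{m:\,k\mid m}\frac{\theta_m}{m}t^m$ to be holomorphic in a $\Delta_0(r,R,\phi)$-domain with a uniform control of its singular behaviour at $r$, and this is exactly the feature that the class $\mathcal{LF}(r,\vartheta)$ of Definition~\ref{def:Delta0K} is designed to encode. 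Once this is granted, the remainder is the Flajolet--Odlyzko singularity analysis of \cite{FlSe09} together with the residue bookkeeping already performed for $\log\widetilde O_n$.
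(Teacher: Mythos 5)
Your overall route is exactly the paper's: write $\ET{\log O_n}=\ET{\log Y_n}-\ET{\Delta_n}$, discard the range $k>\log^2(n)$ via Proposition~\ref{prop:bound_Dnk}(2) and \eqref{eq:intro_Mangoldt_sum3}, obtain the un-truncated analogues of Lemmas~\ref{lem:classF_generating_uDnk_tilde} and \ref{lem:classF_asymptotic_Dk_tilde} (these are the paper's Lemmas~\ref{lem:classF_generating_uDnk} and \ref{lem:classF_asymptotic_Dk}), reduce to $\sum_k\Lambda(k)(e^{-x_k}-1+x_k)$ with $x_k=\vartheta\log(n)/k$, and finish with the Mellin--residue computation of Corollary~\ref{cor:classF_E_logO_n_tilde_with_RH}. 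All of that matches, including the bookkeeping of the $O((\log\log n)^3)$ error.

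There is, however, a concrete flaw in the step you yourself single out as the main obstacle. You assert that $g_\Theta\in\mathcal{LF}(r,\vartheta)$ guarantees that $g_{\Theta,k}$ extends holomorphically to \emph{a fixed $\Delta_0(r,R,\phi)$-domain}, so that ``a singularity analysis along the contour of Lemma~\ref{lem:E_tilde_Yn}'' applies. This is not what Definition~\ref{def:Delta0K} provides, and it cannot be: already for the Ewens measure $g_{\Theta,k}(t)=\frac{\vartheta}{k}\log\bigl(\frac{1}{1-(t/r)^k}\bigr)$ has branch points at \emph{all} $k$-th roots of unity times $r$, not only at $r$. The class $\mathcal{LF}(r,\vartheta)$ accordingly demands holomorphy only in $\Delta_{0,k}(r,R,\phi)=\bigcap_{m=0}^{k-1}e^{2\pi i m/k}\Delta_0(r,R,\phi)$. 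Consequently the contour of Lemma~\ref{lem:E_tilde_Yn} (a single indentation at $r$ plus a circle of radius $r(1+b_n^{-1})$) is not admissible: it would enclose the non-principal singularities $re^{2\pi i j/k}$, $1\le j\le k-1$, where $g_{\Theta,k}\exp(g_\Theta)$ and $\exp(g_\Theta-g_{\Theta,k})$ fail to be analytic. One must instead use the contour of Figure~\ref{fig:curve_flajolet_4_a}, indenting around each $re^{2\pi i j/k}$, and then estimate the $k-1$ additional local contributions via the expansions of $g_\Theta$ and $g_{\Theta,k}$ at those points; these produce the extra error term of order $k^{\vartheta+1}/n^{\vartheta}$ in Lemma~\ref{lem:classF_asymptotic_Dk}, which is harmless in the range $k\le\log^2(n)$ but must actually be derived rather than absorbed into a generic ``(error)''. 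With that correction your argument coincides with the paper's proof.
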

This statement yields as an immediate consequence
\begin{corollary}
\label{cor:Equiv_RH}
Suppose that $g_\Theta\in \mathcal{LF}(r,\vartheta)$. Then following statements are equivalent 
\begin{enumerate}
 \item[\textup{(1)}]  The Riemann hypothesis is true.
 \item[\textup{(2)}]  We have for all $\epsilon>0$
\begin{align*}
  \ET{\log O_n}
=\ET{\log Y_n}
- \vartheta \log(n)\bigl( 1-\log(\vartheta \log(n)) \bigr)
+O\left(\log(n)\right)^{\frac{1}{2}+\epsilon}.
\end{align*}
\end{enumerate}
\end{corollary}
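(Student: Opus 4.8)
The plan is to read off the equivalence from Theorem~\ref{thm:classF_E_logO_n}. Comparing the expansion \eqref{eq:E_On_exact} with statement (2) of the corollary, and using $(\log\log(n))^3 = o\bigl((\log(n))^{1/2}\bigr)$, one sees at once that (2) is equivalent to the purely analytic assertion
\begin{align*}
E_n \coloneqq \sum_\rho \Gamma(-\rho)\,(\vartheta\log(n))^{\rho} = O\bigl((\log(n))^{1/2+\epsilon}\bigr)\qquad\text{for every }\epsilon>0,
\end{align*}
the sum being over the non-trivial zeros $\rho$ of $\zeta$. It therefore remains to prove that this bound holds precisely when the Riemann hypothesis is true; this is of the same nature as the remark made just before Corollary~\ref{cor:Equiv_RH_tilde}, and the argument sketched there adapts, as I now indicate.

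For $(1)\Rightarrow(2)$ I would argue as follows. Under RH every non-trivial zero has the form $\rho=\tfrac12+i\gamma$, so $\bigl|(\vartheta\log(n))^{\rho}\bigr|=(\vartheta\log(n))^{1/2}$. Stirling's formula gives $|\Gamma(-\rho)|=O\bigl(|\gamma|^{-1/2}e^{-\pi|\gamma|/2}\bigr)$, and since the number of zeros up to height $T$ is $O(T\log T)$, the series $\sum_\rho|\Gamma(-\rho)|$ converges. Hence $|E_n|\le(\vartheta\log(n))^{1/2}\sum_\rho|\Gamma(-\rho)|=O\bigl((\log(n))^{1/2}\bigr)$, which is in fact slightly stronger than what (2) demands.

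The converse $(2)\Rightarrow(1)$ is the only genuine difficulty, since one must rule out cancellation among the oscillating factors $(\vartheta\log(n))^{i\gamma}$; I would settle it by the classical Mellin--Landau mechanism used for the Chebyshev function (see \cite[Section~II.4, Corollary~3.1]{Te95}). Put $E(x)\coloneqq\sum_\rho\Gamma(-\rho)x^{\rho}$ for real $x\ge1$ and $\Theta\coloneqq\sup\{\Re\rho:\zeta(\rho)=0\}$, so $\Theta\le1$ because all non-trivial zeros lie in the critical strip. Absolute convergence of $\sum_\rho|\Gamma(-\rho)|$ and of $\sum_\rho|\Gamma(-\rho)||\rho|$ yields $|E(x)|\le Cx^{\Theta}$ and $|E'(x)|\le C'x^{\Theta-1}\le C'$ on $[1,\infty)$, so $E$ is Lipschitz there; since consecutive sampling points satisfy $\vartheta\log(n+1)-\vartheta\log(n)=O(1/n)\to0$, the discrete bound (2) upgrades to $E(x)=O\bigl(x^{1/2+\epsilon}\bigr)$ for every real $x$ and every $\epsilon>0$. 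Consequently the Mellin integral $I(s)\coloneqq\int_1^\infty E(x)x^{-s-1}\,dx$ converges and is holomorphic on $\{\Re s>1/2\}$. On the other hand, term-by-term integration (justified by Fubini when $\Re s>\Theta$) gives $I(s)=\sum_\rho\Gamma(-\rho)/(s-\rho)$, and this series defines a meromorphic function on $\C$ with a simple pole of non-zero residue $\Gamma(-\rho)$ at each zero $\rho$ — note $-\rho$ is never a pole of $\Gamma$, since $0<\Re\rho<1$. Thus $I$ furnishes a holomorphic continuation of $\sum_\rho\Gamma(-\rho)/(s-\rho)$ from $\{\Re s>\Theta\}$ to all of $\{\Re s>1/2\}$, which is impossible if $\zeta$ has a zero with $\Re\rho>1/2$. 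Hence no such zero exists, i.e. RH holds.

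The main obstacle, as flagged above, is exactly this last implication: upgrading a bound along the sparse (but asymptotically dense) sequence $\{\vartheta\log(n)\}_{n}$ to a bound for all real arguments, followed by the analytic-continuation contradiction; everything else — the reduction via Theorem~\ref{thm:classF_E_logO_n}, the Stirling estimates, and the Fubini justification — is routine. Finally, I would remark that the same scheme, started from Corollary~\ref{cor:classF_E_logO_n_tilde_with_RH} instead of Theorem~\ref{thm:classF_E_logO_n}, reproves Corollary~\ref{cor:Equiv_RH_tilde} with $\log(b_n)$ in place of $\log(n)$.
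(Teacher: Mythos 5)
Your proposal is correct and follows essentially the same route as the paper, which treats the corollary as an immediate consequence of Theorem~\ref{thm:classF_E_logO_n} (the error term $(\log\log n)^3$ being negligible against $(\log n)^{1/2}$) combined with the classical argument sketched before Corollary~\ref{cor:Equiv_RH_tilde} and referenced to \cite[Section~II.4, Corollary~3.1]{Te95}. You merely spell out the details the paper leaves implicit — the Stirling bound making $\sum_\rho|\Gamma(-\rho)|$ absolutely convergent, and, for the converse, the Lipschitz interpolation from the sequence $\vartheta\log(n)$ to all real arguments followed by the Landau-type analytic-continuation contradiction — and these details are right.
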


Equation \eqref{eq:E_On_exact} was proven by Zacharovas in \cite{Za04a} for the uniform measure on $\Sn$ and in \cite{Za04b} on the 
subgroup $\mathfrak{S}_n^{(k)} \coloneqq \{ \sigma = \tau^k|\tau\in\Sn\}$. Zacharovas also noted the implication $(1) \Rightarrow (2)$ of 
Corollary~\ref{cor:Equiv_RH}, but not the important opposite implication.

Recall that the crucial point in the proof of Theorem~\ref{thm:classF_E_logO_n_tilde} was the expansion of $\mathbb{E}_{\Theta}[\widetilde{\Delta}_n]$ as in \eqref{eq:exp_logO_tilde_Delta_tilde} and the expected values of $\widetilde{D}_{nk}$ and $\widetilde{D}^*_{nk}$ for $k\leq \log^2(n)$.
We thus start by studying  $\ET{D_{nk}}$ and $\ET{D^*_{nk}}$. 
\begin{lemma}\label{lem:classF_generating_uDnk}
For $k\in\N$ and $u\in\C$ the following holds:
\begin{enumerate}
\item[\textup{(1)}]  $ \displaystyle \ET{u^{D_{nk}}} 
=  \frac{1}{h_n}t^n [ \exp\left(g_\Theta(t) +(u-1) g_{\Theta,k}(t) \right)], $ \\
\item[\textup{(2)}]  $ \displaystyle \ET{D_{nk}} 
= \frac{1}{h_n}t^n [ g_{\Theta,k}(t)\exp\left(g_\Theta(t) \right)],$\\
  \item[\textup{(3)}]  $ \displaystyle \PT{D_{nk}^*=0} 
  =
  \frac{1}{h_n}t^n [\exp\left(g_\Theta(t)- g_{\Theta,k}(t) \right)],$
\end{enumerate}
where
\begin{align}
\label{eq:def:g_T_k}
 g_{\Theta,k}(t)
=
\sum_{m=1}^\infty \frac{\theta_m}{m}\one_{\{k|m\}} t^m.
\end{align}
%
%
\end{lemma}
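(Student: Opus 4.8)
This lemma is the untruncated counterpart of Lemma~\ref{lem:classF_generating_uDnk_tilde}, and the plan is to prove it in the same manner, namely via the cycle index theorem (Lemma~2.3 in \cite{StZe14a}, as used in the proof of Lemma~\ref{lem:classF_generating_E_log_Y_tilde}). Recall from \eqref{eq:def:classF_Dnk} that $D_{nk}=\sum_{m\ge1}C_m\one_{\{k\mid m\}}$ counts the cycles whose length is divisible by $k$; the sum is effectively finite on $\Sn$ since $C_m\equiv0$ for $m>n$. First I would apply the cycle index theorem with the weights $a_m\coloneqq u\,\theta_m$ when $k\mid m$ and $a_m\coloneqq\theta_m$ otherwise, so that each cycle of length divisible by $k$ contributes an extra factor $u$, hence $u^{D_{nk}}$ in total.

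This produces, as formal power series in $t$,
\begin{align*}
 \sum_{n=0}^{\infty} h_n t^n\,\ET{u^{D_{nk}}}
 &= \exp\Big(\sum_{m=1}^{\infty}\frac{a_m}{m}t^m\Big)
  = \exp\Big(\sum_{k\mid m}\frac{u\theta_m}{m}t^m+\sum_{k\nmid m}\frac{\theta_m}{m}t^m\Big)\\
 &= \exp\big(g_\Theta(t)+(u-1)\,g_{\Theta,k}(t)\big),
\end{align*}
using the definitions \eqref{eq:def_g_theta} of $g_\Theta$ and \eqref{eq:def:g_T_k} of $g_{\Theta,k}$. Extracting the coefficient of $t^n$ on both sides gives assertion~(1). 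The infiniteness of the series $g_{\Theta,k}$ is harmless here: for fixed $n$ only the terms with $m\le n$ affect $\nth{\,\cdot\,}$, so one may equally run the argument with a finite cut-off $c$ exactly as in the proof of Lemma~\ref{lem:classF_generating_E_log_Y_tilde} and note that both sides stabilise once $c\ge n$.

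Assertions~(2) and~(3) then follow from~(1) by specialising $u$; since the identity in~(1) holds coefficientwise in $t$ as a polynomial identity in $u$, these substitutions are legitimate. Differentiating~(1) in $u$ and setting $u=1$ turns the left-hand side into $\ET{D_{nk}}$ and the right-hand side into $\frac1{h_n}\nth{g_{\Theta,k}(t)\exp(g_\Theta(t))}$, which is~(2). Setting $u=0$ in~(1), and using $0^{D_{nk}}=\one_{\{D_{nk}=0\}}$ together with $D^*_{nk}=\min\{1,D_{nk}\}$ (so that $\{D^*_{nk}=0\}=\{D_{nk}=0\}$), gives $\PT{D^*_{nk}=0}=\frac1{h_n}\nth{\exp(g_\Theta(t)-g_{\Theta,k}(t))}$, which is~(3).

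I do not expect a genuine obstacle: the statement is a direct transcription of the generating-function computation already carried out for the truncated variables $\widetilde D_{nk}$, and the only points needing (routine) care are the formal power series bookkeeping around the infinite series $g_{\Theta,k}$ and the justification of the specialisations $u=1$ and $u=0$.
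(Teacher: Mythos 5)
Your proposal is correct and follows essentially the same route as the paper: the paper proves this lemma by referring back to the truncated version (Lemma~\ref{lem:classF_generating_uDnk_tilde}), whose proof is exactly the cycle index computation with modified weights $a_m$ followed by specialisation in $u$, as you carry out. Your care in using $u=1$ (not $u=0$) for the differentiation step in part (2) is in fact more accurate than the paper's own wording, which contains a slip at that point.
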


\begin{proof}
 %
%
The proof is very similar to the proof of Lemma~\ref{lem:classF_generating_uDnk_tilde}. 
\end{proof}

Equation \eqref{eq:classF_theta_k_to_vartheta} implies that that there exists constants $c,C>0$ such that  
$c\,\theta_m r^m \leq \vartheta \leq C \,\theta_m r^m$ for $m$ large
if $g_\Theta\in\mathcal{F}(r,\vartheta,K)$.
Thus $g_{\Theta,k}$ has radius of convergence $r$ for all $k$. If we would like to use a similar argument as in 
Lemma~\ref{lem:classF_asymptotic_Dk_tilde}, we require further assumptions on the function $g_\Theta$. To get a vague intuition, let us have a look at the Ewens measure, meaning that $\theta_m=\vartheta$ for all $m\in\N$.
For this model, 
\begin{align*}
 g_\Theta(t) = \vartheta\log\left( \frac{1}{1-t/r} \right) 
 \quad \text{ and } \quad g_{\Theta,k}(t) = \frac{\vartheta}{k}\log\left( \frac{1}{1-(t/r)^k} \right).
\end{align*}
Clearly, each $g_{\Theta,k}(t) $ can be extended beyond its disk of convergence and its singularities are $k$-th roots of unity. 
These observations motivate the following definition.
\begin{definition}
\label{def:Delta0K}
Let $r,\vartheta>0$ be given. We write $\mathcal{LF}(r,\vartheta)$ for the set of all functions 
$g_\Theta(t) = \sum_{m=1}^\infty \frac{\theta_m}{m}t^m$ such that there exists $R>r$ and $0 < \phi <\frac{\pi}{2}$
so that the following conditions are satisfied for all $k\in\N$: 
\begin{enumerate}
 \item[\textup{(1)}]  $g_{\Theta,k}$ is holomorphic in $\Delta_{0,k}(r,R,\phi)\coloneqq\bigcap_{m=0}^{k-1} e^{\frac{2\pi m i }{k}}\Delta_0(r,R,\phi)$ (see Figure~\ref{fig:curve_flajolet_4_a}) with $g_{\Theta,k}$ as in \eqref{eq:def:g_T_k}.
 \item[\textup{(2)}]  We have 
   \begin{align}
   g_{\Theta,k}(t) = \frac{\vartheta}{k} \log\left( \frac{1}{1-(t/r)^k} \right) + K_k+ O \left( t-r \right)   \text{ as } t\to r
   \label{eq_class_F_r_alpha_near_r_k}
   \end{align}
   with $O(\cdot)$ uniform in $k$ and $K_k= O(1/k)$.
%
\end{enumerate}
\end{definition}
%

We require for the the proof of Theorem~\ref{thm:classF_E_logO_n} the asymptotic behavior of $\ET{D_{nk}}$ and $\ET{D^*_{nk}}$ for $g_\Theta\in \mathcal{LF}(r,\vartheta)$.
We have

\begin{lemma}
\label{lem:classF_asymptotic_Dk}
Suppose that $g_{\Theta} \in \mathcal{LF}(r,\vartheta)$, then the following holds uniformly in $k$ for
$2\leq k\leq n^{\frac{\vartheta}{1+\vartheta}}$ :
\begin{enumerate}
\item[\textup{(1)}]  $\displaystyle\ET{D_{nk}}
= \frac{\vartheta}{k}\log\left(\frac{n}{k}\right) + O\Big(\frac{1}{k} + \frac{k^{\vartheta+1}}{n^\vartheta}\Big),$ \\

\item[\textup{(2)}]  $\displaystyle \PT{D_{nk}^*=0}
= \left(\frac{n}{k}\right)^{-\frac{\vartheta}{k}} \frac{\Gamma(\vartheta)}{\Gamma\bigl(\vartheta(1-\frac{1}{k})\bigr)}
\Big(1 + O\Big(\frac{1}{n} +\frac{1}{k}+ \frac{k^{\vartheta+1}}{n^\vartheta}\Big)\Big).$
\end{enumerate}
\end{lemma}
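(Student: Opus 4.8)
The plan is to use the generating-function identities from Lemma~\ref{lem:classF_generating_uDnk} together with the singularity analysis machinery (Theorems~VI.3 and VI.4 in \cite{FlSe09}), exactly mirroring the proof of Lemma~\ref{lem:classF_asymptotic_Dk_tilde}, but now working with the \emph{untruncated} functions $g_{\Theta,k}$ rather than their truncations $\widetilde{g}_{\Theta,k}$. The key point is that the class $\mathcal{LF}(r,\vartheta)$ was designed precisely so that each $g_{\Theta,k}$ admits an analytic continuation to the $k$-fold symmetric $\Delta$-domain $\Delta_{0,k}(r,R,\phi)$ with the controlled logarithmic singularity \eqref{eq_class_F_r_alpha_near_r_k} at its dominant singularity $t=r$ (the other singularities on $|t|=r$ being the $k$-th roots of unity times $r$, which by the uniformity assumption contribute lower-order terms; more carefully, since we restrict to $k \leq n^{\vartheta/(1+\vartheta)}$ these can be absorbed into the stated error terms).

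For part (1): by Lemma~\ref{lem:classF_generating_uDnk}(2), $h_n \ET{D_{nk}} = [t^n]\bigl(g_{\Theta,k}(t)\exp(g_\Theta(t))\bigr)$. I would extract this coefficient by a Cauchy integral over the Flajolet--Sedgewick contour $\gamma = \gamma_1\cup\gamma_2\cup\gamma_3\cup\gamma_4$ used in Lemma~\ref{lem:E_tilde_Yn}, with the small circle of radius $1/n$. Near $t=r$ one substitutes $t = r(1+w/n)$ and uses \eqref{eq_class_F_r_alpha_near_r_k}, so that $g_{\Theta,k}(r(1+w/n)) = \frac{\vartheta}{k}\log(n/(-kw)) + K_k + O(w/n)$ — here the factor $1-(t/r)^k = -kw/n + O((w/n)^2)$ produces the extra $\log(n/k)$ and a $\log(-w)$ which is harmless inside the Hankel-type integral. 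Combined with $\exp(g_\Theta(t)) \sim e^K(-w)^{-\vartheta}e^{\,\cdots}$ and the Gamma integral \eqref{eq:classF_integral_Gamma}, and dividing by the asymptotics of $h_n$ from Corollary~\ref{cor:haviour_of_hn}, one gets $\ET{D_{nk}} = \frac{\vartheta}{k}\log(n/k) + O(1/k)$. The big-circle contribution $\gamma_4$, with radius $r(1+b_n^{-1})$-type choice adapted here to $r(1 + (n/k)^{-1})$ or similar, together with the arcs near the non-dominant singularities on $|t|=r$, contributes the error $O(k^{\vartheta+1}/n^\vartheta)$; tracking this is where the restriction $k \le n^{\vartheta/(1+\vartheta)}$ enters (it ensures this term is $o(1)$).

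For part (2): by Lemma~\ref{lem:classF_generating_uDnk}(3), $h_n\PT{D^*_{nk}=0} = [t^n]\exp\bigl(g_\Theta(t) - g_{\Theta,k}(t)\bigr)$. Now $g_\Theta(t) - g_{\Theta,k}(t)$ has, near $t=r$, the singular behaviour $\vartheta\log\frac{1}{1-t/r} - \frac{\vartheta}{k}\log\frac{1}{1-(t/r)^k} + (K - K_k) + O(t-r)$, so the exponent behaves like $(1-t/r)^{-\vartheta}(1-(t/r)^k)^{\vartheta/k}$ times an analytic factor; writing $1-(t/r)^k = (1-t/r)\cdot u_k(t)$ with $u_k(r) = k$, this gives a dominant singularity of type $(1-t/r)^{-\vartheta(1-1/k)}$ with multiplicative constant $k^{\vartheta/k}e^{K-K_k}$. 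Applying singularity analysis (the transfer theorem) then yields a coefficient $\sim \frac{k^{\vartheta/k} e^{K-K_k}}{\Gamma(\vartheta(1-1/k))} n^{\vartheta(1-1/k)-1} r^{-n}$, and dividing by $h_n \sim \frac{e^K n^{\vartheta-1}}{\Gamma(\vartheta)r^n}$ produces exactly $(n/k)^{-\vartheta/k}\frac{\Gamma(\vartheta)}{\Gamma(\vartheta(1-1/k))}$ after collecting the $n^{-\vartheta/k}$ and $k^{\vartheta/k}$ factors (and noting $e^{-K_k} = 1+O(1/k)$). The error terms arise from the $O(t-r)$ correction in \eqref{eq_class_F_r_alpha_near_r_k} ($O(1/n)$), from the $k$-dependence of the singular exponent and the roots-of-unity singularities ($O(1/k)$ and $O(k^{\vartheta+1}/n^\vartheta)$).

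\textbf{Main obstacle.} The routine part is the Cauchy/Hankel contour integration; the genuinely delicate point is obtaining \emph{uniformity in $k$} over the whole range $2 \le k \le n^{\vartheta/(1+\vartheta)}$. One must verify that the $O(\cdot)$ constants in the expansion \eqref{eq_class_F_r_alpha_near_r_k} are truly $k$-independent (this is part of the definition of $\mathcal{LF}(r,\vartheta)$, so it may be invoked), and — more subtly — that the contributions of the secondary singularities of $g_{\Theta,k}$ at the $k$-th roots of unity on the circle $|t|=r$, as well as the big-circle estimate, are controlled uniformly and collapse into the stated $O(k^{\vartheta+1}/n^\vartheta)$ term. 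This requires choosing the contour radius $R'$ carefully as a function of both $n$ and $k$ (so that the big-circle term is $r^{-n}$ times something that beats $n^{\vartheta-1}$ but does not lose too much), and it is exactly the balancing of these two competing error sources that fixes the admissible range of $k$. I would handle this by following the proof of Theorem~VI.3 in \cite{FlSe09} but keeping $k$ explicit throughout, and checking at each step that no hidden constant blows up with $k$ within the allowed range.
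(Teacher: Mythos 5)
Your proposal follows essentially the same route as the paper's proof: extract the coefficient via Cauchy's formula from Lemma~\ref{lem:classF_generating_uDnk}, integrate over the $k$-fold symmetric contour inside $\Delta_{0,k}(r,R,\phi)$, expand $g_\Theta$ and $g_{\Theta,k}$ near $t=r$ under the substitution $t=r(1+w/n)$ (using $1-(t/r)^k=-kw/n+O((w/n)^2)$ to produce the $\log(n/k)$ and the exponent $\vartheta(1-1/k)$), reduce to the Hankel integral \eqref{eq:classF_integral_Gamma}, and attribute the remaining error to the secondary singularities at $r e^{2\pi i j/k}$, $1\le j\le k-1$, summed over $j$ — which is exactly how the paper obtains the $O(k^{\vartheta+1}/n^{\vartheta})$ term and the restriction $k\le n^{\vartheta/(1+\vartheta)}$. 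Your identification of the delicate points (uniformity in $k$ of the expansion \eqref{eq_class_F_r_alpha_near_r_k} and control of the roots-of-unity contributions) matches the structure of the paper's argument.
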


\begin{proof}
The proof is very similar to the proof of Lemma~\ref{lem:E_tilde_Yn}. We combine
Theorem~\ref{lem:classF_generating_uDnk} and Cauchy's integral formula to obtain
\begin{align*}
h_n \ET{D_{nk}}
&=
\frac{1}{2\pi i} \int_{\gamma} g_{\Theta,k}(t) \exp\left( g_\Theta(t) \right) \, \frac{dt}{t^{n+1}},\\
h_n \ET{D^*_{nk}}
&=
\frac{1}{2\pi i} \int_{\gamma} \exp\left( g_\Theta(t) -  g_{\Theta,k}(t) \right) \, \frac{dt}{t^{n+1}}.
\end{align*}
By assumption, $g_{\Theta,k}$ is holomorphic in some domain $\Delta_{0,k}(r,R,\phi)$ (see Definition~\ref{def:Delta0K}).
Following the idea in \cite[Section~VI.3]{FlSe09},  we choose the curve $\gamma$ as in Figure~\ref{fig:curve_flajolet_4_a}, such that $\gamma$ is contained in $\Delta_{0,k}(r,R,\phi)$.

\begin{figure}[h]
\centering
\includegraphics[height=.22\textheight]{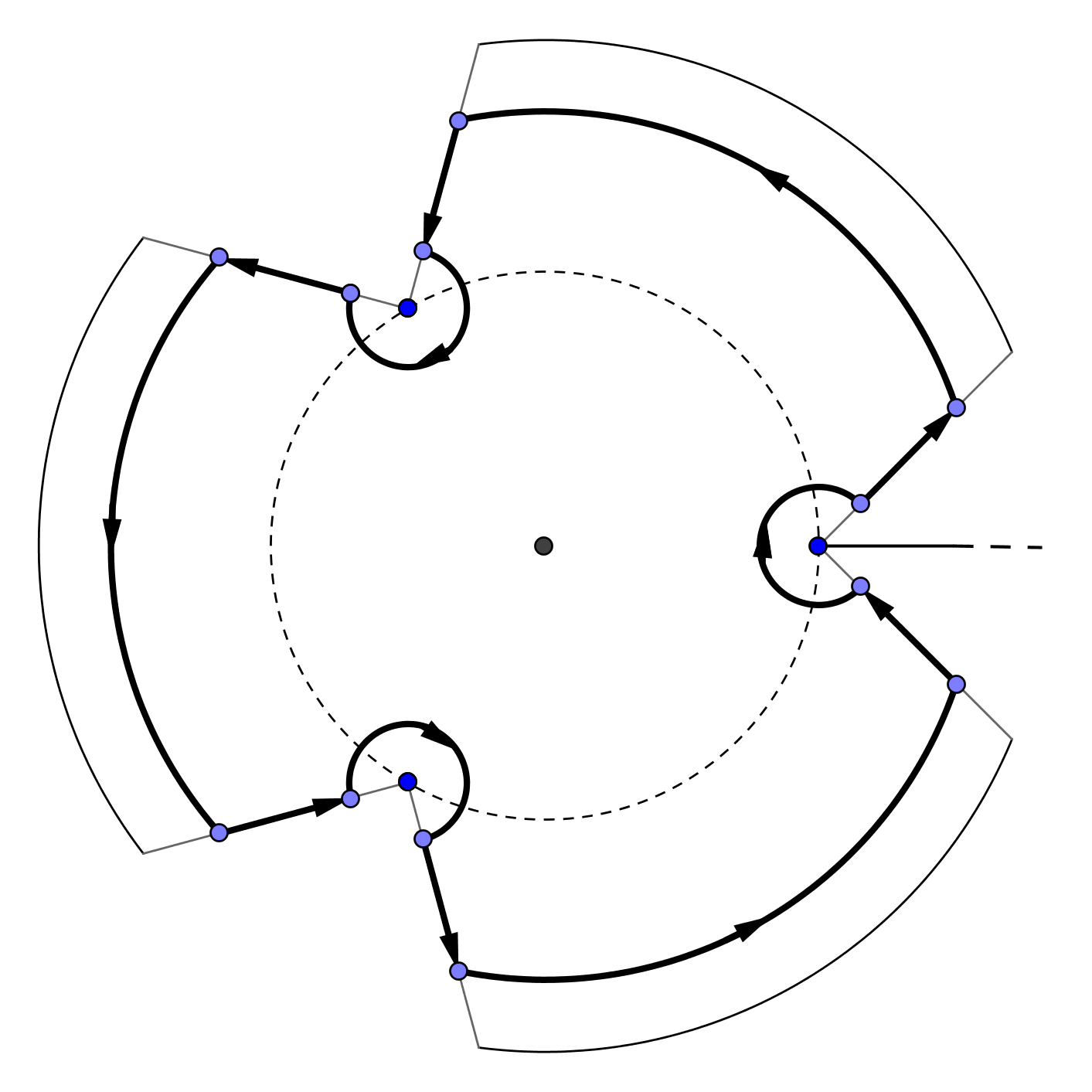}
 \put(-66,53,5){\mbox{\scriptsize$0$}}
  \put(-94,70){\mbox{\scriptsize$|z|= r$}}
 \put(-29,121){\mbox{\scriptsize$|z|= R$}}
 \put(-18,54){\mbox{\scriptsize$|z|= R'$}}
 \put(-115,42){\mbox{\scriptsize$\gamma$}}
\caption{{Illustration of the curve $\gamma$ in proof of Lemma~\ref{lem:classF_asymptotic_Dk}} with $k=3$.}
\label{fig:curve_flajolet_4_a}
\end{figure}
More precisely, we choose the radius of the big circle $\gamma_4\coloneqq\gamma_{4,0}\cup\cdots\cup\gamma_{4,k-1}$ as $R^\prime\coloneqq r(1+b_n^{-1})$ with $b_n$ as in \eqref{eq:def_On_tilde_Yn_tilde}, 
the radii of the small circles as $1/n$ and the angles of the lines segments all equal and independent of $n$.

Let us first show that the integral over the big circle $\gamma_4$ can be neglected. 
Since $kb_n=o(n)$, we get
%
%
\begin{align*}
 |g_{\Theta,k}(t)| \leq \vartheta \log \left|\frac{1}{1-(t/r)^k}\right| + O(1)
 \, \implies \,
|g_\Theta(R^\prime e^{i\varphi})| 
\leq
\vartheta \log(kb_n) +O(1).
\end{align*}
The estimates on $t^{-n-1}$ and $g_\Theta(t)$ are the same as in the proof of Lemma~\ref{lem:E_tilde_Yn}.
Combining all three, one immediately realizes that the integral over $\gamma_4$ is negligible.

It remains to compute the behavior along the curves around the points $r\cdot e^{j\frac{2\pi i}{k}}$ for $0\leq j < k$.
We have to distinguish the cases $1\leq j <k$ and $j=0$. For $j=0$ use the variable substitution $t = r(1+w/n)$ with $w=O(\log^2(n))$. 
This maps the curve around $r$ to the bounded curve $\gamma'$ in Figure~\ref{fig:gamma'}.
Furthermore, on this curve the following expansions hold:
\begin{align*}
 g_\Theta(t)     &= \vartheta \log(n)-\vartheta\log(-w) +K+ O(w/n),\\
 g_{\Theta,k} (t)&= \frac{\vartheta}{k}\bigl(\log(n/k) -\log(-w)\bigr)+ K_k + O(w/n),\\
 t^{-n-1} &= r^{-n-1} e^{-w}(1+O(w/n))  .
\end{align*}

 This implies
\begin{align*}
&\frac{1}{2\pi i} \int_{\gamma_{1,0}\cup\gamma_{2,0}\cup\gamma_{3,0}} \exp\left( g_\Theta(t) -g_{\Theta,k}(t) \right) \, \frac{dt}{t^{n+1}}\\
=&
\frac{n^{\vartheta(1-\frac{1}{k})-1} e^{K-K_k}}{r^n k^{\vartheta\frac{-1}{k}} 2\pi i} 
\int_{\gamma'} (-w)^{-\vartheta(1-\frac{1}{k})} e^{-w} (1+O(w/n)) \ dw.
\end{align*}

%
As in the proof of Lemma~\ref{lem:E_tilde_Yn}, one can replace the bounded curve $\gamma'$ by the Hankel contour $\gamma''$ in Figure~\ref{fig:gamma''}.
Using again \eqref{eq:classF_integral_Gamma} and Corollary~\ref{cor:haviour_of_hn} shows that the integral over this part gives the 
main term in Equation (2) of Lemma~\ref{lem:classF_asymptotic_Dk}. The argument for (1) is similar.

We now proceed to  $1\leq j \leq k-1$. We use here the variable substitution $t = r\cdot e^{j\frac{2\pi i}{k}}(1+w/n)$.
The curve  $\gamma_j\coloneqq\gamma_{1,j}\cup\gamma_{2,j}\cup\gamma_{3,j}$ is also mapped to $\gamma'$, but here the expansions along $\gamma'$ are given by
%
%
\begin{align*}
g_\Theta(t) 
&= 
\vartheta \log\left(  1 -e^{\frac{2\pi i j}{k}} \right)
+ O\Big(\frac{wk}{n}\Big) +O(1),\\
g_{\Theta,k}(t) 
&= 
\frac{\vartheta}{k}\bigl(\log(n/k) -\log(-w) \bigr)+K_k +O(w/n),\\
t^{-n-1} 
&= 
r^{-n-1}e^{\frac{2\pi i j}{k}}e^{-w}(1+O(w/n))  .
\end{align*}
Insert this into the Cauchy integral and summing over $j$ from $1$ to $k-1$ gives the error terms in (1) and (2).
\end{proof}

We are now prepared to prove the main result of this section.
\begin{proof}[Proof of Theorem~\ref{thm:classF_E_logO_n} ]
The argument is very similar to the one of proof of the Theorem~\ref{thm:classF_E_logO_n_tilde} and Corollary~\ref{cor:classF_E_logO_n_tilde_with_RH} . We thus give here only a short overview.
Recall that
\begin{align*}
\ET{\log Y_n}-\ET{\log O_n}
= 
\ET{\Delta_n} 
=
\sum_{k=1}^{n} \Lambda(k)\ET{D_{nk} - D_{nk}^*}.
\end{align*}
Denote $\alpha \coloneqq \lfloor \log^2 (n) \rfloor$ and consider the two sets 
$S_1\coloneqq\{1\leq k \leq \alpha\}$ and $S_2\coloneqq\{\alpha<k\leq n\}$. As in the proof of Theorem~\ref{thm:classF_E_logO_n_tilde}, we can show that the sum over the second set is negligible.
%
%
%
It is thus sufficient to consider only the sum over $S_1$. Lemma~\ref{lem:classF_asymptotic_Dk} yields for $k\leq \log^2 (n)$
\begin{align*}
&\ET{D_{nk} - D_{nk}^*} 
=
\ET{D_{nk}} -1+ \PT{D_{nk}^*=0} \\
&=
\frac{\vartheta}{k}\log (n) -1+ \left(\frac{n}{k}\right)^{-\frac{\vartheta}{k}}  \left(1+ O\left(\frac{1}{k} + \frac{k^{\vartheta+1}}{n^\vartheta}\right) \right) 
+ O\left(\frac{\log (k)}{k}\right)\nonumber\\
&=
\frac{\vartheta}{k}\log(n)-1  + e^{-\frac{\vartheta}{k}\log(n)} + O\left(\frac{\log(k)}{k}\right).
\end{align*}
This is now (almost) the same expression as in the proof of Corollary~\ref{cor:classF_E_logO_n_tilde_with_RH}. The remaining computations are the same and thus we omit them.
\end{proof}

\vskip 40pt 

\section{Parameters with polynomial growth: $\theta_m = m^{\gamma}, \gamma >0$ } \label{section:algrowth}

Now we turn our attention to a different class of parameters, namely polynomial parameters $\theta_m = m^{\gamma}$ with $\gamma >0$. Only few results are known for these parameters. Ercolani and Ueltschi \cite{ErUe11} show that for this model, a typical cycle has length of order $n^{\frac{1}{1+\gamma}}$ and that the total number of cycles has order $n^{\frac{\gamma}{1+\gamma}}$. Recently, we proved in \cite{StZe14a} that the cycle counts of the small cycles of length of order $o(n^{\frac{1}{1+\gamma}})$ can be approximated by independent Poisson random variables. Using this result, we proved the Erd\"os-Tur\'an law for this setting, see \cite[Theorem 4.3]{StZe14a}. 

In this section we will prove large deviations estimates for $\log O_n$. The method we are applying to get our results is the saddle-point method. We will not repeat all details about this method here and refer the reader to Section 2.3 and Section 4.1 in \cite{StZe14a}.
%

\vskip 15pt 
\subsection{Preliminaries}\label{subsection:algrowth_preliminaries}

As in Section~\ref{section:classF}, our basic strategy is to establish results for the approximating random variable $\log Y_n = \sum_{m=1}^n \log(m) C_m$ and then to show that $\Delta_n \coloneqq \log Y_n - \log O_n$ is small enough to transfer the result to $\log O_n$.
Recall \eqref{eq:generating_series_logYn_allgemein}, then for parameters $\theta_m = m^{\gamma}$ the generating series of $\log Y_n$ can the be written as
\begin{align}\label{eq:algrowth_g_Theta}
 \sum_{n=0}^{\infty} h_n \E_{\Theta}[\exp(s \log Y_n)] t^n  
= \exp\left( \sum_{m=1}^{\infty} \frac{1}{m^{1 - s - \gamma}} t^m  \right) =: \exp (\hat{g}_{\Theta}(t,s)).
\end{align}
As we consider $s$ fixed for the moment, we may write $\hat{g}_{\Theta}(t)$ instead of $\hat{g}_{\Theta}(t,s)$.
The function $\hat{g}_{\Theta}$ is known to be the polylogarithm $\Li_{\alpha}$ with parameter $\alpha = 1 - s -\gamma$. Its radius of convergence is $1$ and as $t \rightarrow 1$ it satisfies the following asymptotics for $\alpha \notin \{1, 2, ... \}$
\begin{align}\label{eq:algrowth_asypm_polylog}
 \Li_{\alpha}(t)
\sim \Gamma(1-\alpha)(-\log (t))^{\alpha -1} + \sum_{j \geq 0} \frac{(-1)^j}{j!}\xi(\alpha - j) (-\log (t))^j .
\end{align}
For $\gamma > 0$, that is $\alpha < 1$, this implies
\begin{align*}
 \hat{g}_{\Theta}(t) 
=  \Gamma(1-\alpha)(-\log (t))^{\alpha-1} +\zeta(\alpha) +O(t-1)
\end{align*} 
and an appropriate method to investigate the behavior of $\hat{g}_{\Theta}$ is the saddle-point method.
In \cite[Lemma 4.1]{StZe14a} we show that $\hat{g}_{\Theta}$ is log-admissible (see Definition 2.8 in \cite{StZe14a}). This gives us the asymptotic behavior of $h_n$:
\begin{align}
h_n 
&= \big(2\pi \Gamma(2+\gamma)\big)^{-\frac{1}{2}} \Big(\frac{\Gamma(1+\gamma)}{n} \Big)^{\frac{2+\gamma}{2(1+\gamma)}}  \times \nonumber\\
&\exp\bigg(n^{\frac{\gamma}{1+\gamma}}\bigg(\Gamma(1+\gamma)^{\frac{1}{1+\gamma}} + \frac{\Gamma(\gamma)}{\Gamma(1+\gamma)^{\frac{\gamma}{1+\gamma}}}\bigg) \bigg)\big(1+o(1)\big) 
\label{eq:algrowth_h_n}
\end{align}
and an expression for the generating function of $\log Y_n$:
\begin{theorem}[\cite{StZe14a}, Theorem 4.5]
\label{thm:algrowth_mom-gen_logYn}
Let $\hat{g}_{\Theta}$ be as in \eqref{eq:algrowth_g_Theta} with $\gamma > 0$. Then we have
\begin{align*}
&\E_{\Theta}[\exp(s \log Y_n)] \\
= & \Big(\sqrt{\tilde{\gamma}_{2,s}} \,  n^{\frac{1}{2}(\frac{1}{1+\gamma} - \frac{1}{1+\gamma+s})}\Big) 
\exp\bigg(\tilde{\gamma}_{1,s} \, n^{1 -\frac{1}{1+\gamma+s}} -\tilde{\gamma}_{1,0} \, n^{1 -\frac{1}{1+\gamma}} \bigg) \big(1+o(1)\big)
\end{align*}
with
\begin{align*}
\tilde{\gamma}_{1,s} = \frac{(1+\gamma +s)\Gamma(\gamma +s)}{\Gamma(1+\gamma+s)^{1-\frac{1}{1+\gamma+s}}} , 
\quad 
\tilde{\gamma}_{2,s} =  \frac{(1+\gamma) \Gamma(1+\gamma +s)^{\frac{1}{1+\gamma+s}}}{(1+\gamma +s) \Gamma(1+\gamma)^{\frac{1}{1+\gamma}}},
\end{align*}
where the error bounds are uniform in $s$ for bounded $s$.
\end{theorem}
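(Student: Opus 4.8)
The plan is to read the moment generating function off the exponential generating series \eqref{eq:generating_series_logYn_allgemein} and then extract the $n$-th Taylor coefficient by the saddle-point (log-admissibility) machinery developed in \cite{StZe14a}.

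\textbf{Reduction.} Specialising \eqref{eq:generating_series_logYn_allgemein} to $\theta_m=m^\gamma$ gives \eqref{eq:algrowth_g_Theta}, so for each fixed $s$ one has the identity $h_n\,\E_{\Theta}[\exp(s\log Y_n)]=[t^n]\exp(\hat g_\Theta(t,s))$ with $\hat g_\Theta(\cdot,s)=\Li_{1-s-\gamma}$. Since \eqref{eq:algrowth_h_n} already records the asymptotics of $h_n$, the whole task reduces to obtaining an asymptotic expansion of $[t^n]\exp(\hat g_\Theta(t,s))$ with error terms uniform in $s$ on bounded sets, and then dividing by $h_n$.

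\textbf{Log-admissibility and the saddle.} First I would verify that $t\mapsto\hat g_\Theta(t,s)$ is log-admissible in the sense of \cite[Definition~2.8]{StZe14a}, uniformly for $s$ in a fixed bounded neighbourhood of $0$; for $s=0$ this is \cite[Lemma~4.1]{StZe14a}, and the argument carries over for general $s$ because $\gamma+\Re(s)>0$ keeps the polylogarithm index $1-s-\gamma$ strictly below $1$, so the singular expansion \eqref{eq:algrowth_asypm_polylog} at $t=1$ retains the same shape. The coefficient-asymptotics theorem for log-admissible functions then yields
\[
[t^n]\exp(\hat g_\Theta(t,s))=\frac{\exp(\hat g_\Theta(\varrho_n,s))}{\varrho_n^{\,n}\sqrt{2\pi\,b_2(\varrho_n,s)}}\,(1+o(1)),
\]
where $\varrho_n=\varrho_n(s)=e^{-u_n}$ is the saddle point solving $t\,\partial_t\hat g_\Theta(t,s)=n$ and $b_2(t,s)=t\,\partial_t\bigl(t\,\partial_t\hat g_\Theta(t,s)\bigr)$. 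Using $t\,\partial_t\Li_\alpha=\Li_{\alpha-1}$ together with $\Li_\beta(e^{-u})\sim\Gamma(1-\beta)u^{\beta-1}$ as $u\to0^+$ for $\beta<1$, the saddle equation becomes $\Gamma(1+\gamma+s)u_n^{-(1+\gamma+s)}(1+o(1))=n$, whence $u_n=(\Gamma(1+\gamma+s)/n)^{1/(1+\gamma+s)}(1+o(1))$ and $b_2(\varrho_n,s)=\Gamma(2+\gamma+s)u_n^{-(2+\gamma+s)}(1+o(1))$.

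\textbf{Assembling the estimate.} It then remains to substitute these into the displayed formula. Inserting \eqref{eq:algrowth_asypm_polylog} once more, $\hat g_\Theta(\varrho_n,s)+n u_n=\Gamma(\gamma+s)u_n^{-(\gamma+s)}+n u_n+O(1)$; plugging in $u_n$ and simplifying with $\Gamma(1+\gamma+s)=(\gamma+s)\Gamma(\gamma+s)$ turns the divergent part into $\tilde\gamma_{1,s}\,n^{1-1/(1+\gamma+s)}$, while the Gaussian prefactor becomes $(2\pi\Gamma(2+\gamma+s))^{-1/2}\bigl(n/\Gamma(1+\gamma+s)\bigr)^{(2+\gamma+s)/(2(1+\gamma+s))}$ and $\varrho_n^{-n}=e^{nu_n}$ has already been absorbed. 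Dividing the resulting expression by \eqref{eq:algrowth_h_n} (and keeping track of the bounded contributions through the quotient), the Gamma-factors reorganise via $\Gamma(2+\gamma)=(1+\gamma)\Gamma(1+\gamma)$ and $\Gamma(2+\gamma+s)=(1+\gamma+s)\Gamma(1+\gamma+s)$ into $\sqrt{\tilde\gamma_{2,s}}$, the powers of $n$ combine to $n^{\frac12(\frac1{1+\gamma}-\frac1{1+\gamma+s})}$, and the exponential terms combine to $\exp\bigl(\tilde\gamma_{1,s}n^{1-1/(1+\gamma+s)}-\tilde\gamma_{1,0}n^{1-1/(1+\gamma)}\bigr)$, which is the asserted formula.

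\textbf{Main obstacle.} The algebra of the last step is routine; the delicate point is the uniformity. One must run the whole log-admissibility analysis of \cite{StZe14a} with $s$ as a parameter and check that the location $u_n(s)$ of the saddle, the tail bounds on the integrand away from the saddle, and the central Gaussian approximation all carry errors uniform for $s$ in a fixed bounded neighbourhood of $0$. A secondary technicality is that when $1-\gamma-s$ is a non-positive integer the polylogarithm degenerates into a rational function with a genuine pole at $t=1$ rather than an algebraic–logarithmic singularity; there one replaces \eqref{eq:algrowth_asypm_polylog} by the corresponding expansion, re-runs the same saddle-point computation, and checks by continuity in $(\gamma,s)$ that the final formula is unchanged.
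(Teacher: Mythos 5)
Your outline is correct and coincides with the route the paper itself takes: this theorem is imported verbatim from \cite{StZe14a} (Theorem 4.5), where it is proved exactly by verifying log-admissibility of $\hat g_\Theta(\cdot,s)$ uniformly in $s$, solving the saddle-point equation $t\,\partial_t\hat g_\Theta(t,s)=n$ via the polylogarithm expansion \eqref{eq:algrowth_asypm_polylog}, and dividing the resulting coefficient asymptotics by those of $h_n$. Your identification of the uniformity in $s$ as the only delicate point matches the referenced argument, so there is nothing substantive to add.
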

Similarly to Lemma~\ref{lem:classF_closeness1 log Y_n and log O_n}, we need an estimate for the closeness of $\log O_n$ and $\log Y_n$. 
This is given by the following
\begin{lemma}[\cite{StZe14a}, Lemma 4.6]
\label{lem:algrowth_closeness}
For $\theta_m = m^{\gamma}$ with $0<\gamma<1$ the following holds as $n \rightarrow \infty$:
\begin{align*}
 \mathbb{P}_{\Theta}\big(\Delta_n \geq \log(n)\log\log(n) \big) \rightarrow 0. 
\end{align*}
\end{lemma}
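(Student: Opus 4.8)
The plan is to reproduce, for the parameters $\theta_m=m^{\gamma}$, the scheme used for the generalized Ewens measure in Lemma~\ref{lem:classF_closeness1 log Y_n and log O_n}: control the first moment of $\Delta_n$ and then conclude with Markov's inequality. First I would write, exactly as there,
\[
\Delta_n=\sum_{k\le n}\Lambda(k)\,\Delta_{nk},\qquad \Delta_{nk}:=D_{nk}-D_{nk}^{*},
\]
and record the two elementary inequalities $0\le\Delta_{nk}\le D_{nk}$ and $\Delta_{nk}\le D_{nk}(D_{nk}-1)$, the first to be applied for $k$ below some cut-off $k_0$ and the second for $k\ge k_0$. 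The whole problem then reduces to establishing, in the polynomial regime, the analogue of Proposition~\ref{prop:bound_Dnk}, i.e.\ upper bounds for $\ET{D_{nk}}$ and $\ET{D_{nk}(D_{nk}-1)}$ that are uniform in $1\le k\le n$.

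For these moments the starting point is again the exact identities (as in the proof of Proposition~\ref{prop:bound_Dnk}, via Lemma~2.5 of \cite{StZe14a})
\[
\ET{D_{nk}}=\sum_{m=1}^{n}\frac{\theta_m}{m}\,\one_{\{k\mid m\}}\,\frac{h_{n-m}}{h_n},\qquad
\ET{D_{nk}(D_{nk}-1)}=\!\!\sum_{\substack{m,m'=1\\ m\ne m'}}^{n}\!\!\frac{\theta_m\theta_{m'}}{mm'}\,\one_{\{k\mid m\,;\,k\mid m'\}}\,\frac{h_{n-m-m'}}{h_n}+\sum_{m=1}^{n}\Bigl(\frac{\theta_m}{m}\Bigr)^{2}\one_{\{k\mid m\}}\frac{h_{n-2m}}{h_n},
\]
into which I would feed the saddle-point asymptotics \eqref{eq:algrowth_h_n} for $h_n$. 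The essential consequence to extract is that $h_{n-m}/h_n$ is bounded uniformly in $1\le m\le n$ and behaves like $\exp\!\bigl(-c\,m\,n^{-1/(1+\gamma)}\bigr)$ on the range carrying the sums (with $c=\Gamma(1+\gamma)^{1/(1+\gamma)}$), and that $h_{n-m-m'}/h_n$ factorises as $(h_{n-m}/h_n)(h_{n-m'}/h_n)$ up to its error term, which is what decouples the double sum. Substituting $m=jk$ and comparing $\sum_j (jk)^{\gamma-1}\exp(-cjk\,n^{-1/(1+\gamma)})$ with a polylogarithm evaluated near its singularity yields the bound for $\ET{D_{nk}}$, and squaring it (plus a negligible diagonal contribution) gives the bound for the second factorial moment.

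With these estimates in hand one splits $\ET{\Delta_n}=\sum_{k\le k_0}\Lambda(k)\ET{\Delta_{nk}}+\sum_{k>k_0}\Lambda(k)\ET{\Delta_{nk}}$, majorises the first sum by $\sum_{k\le k_0}\Lambda(k)\ET{D_{nk}}$ and the second by $\sum_{k>k_0}\Lambda(k)\ET{D_{nk}(D_{nk}-1)}$, evaluates both using \eqref{eq:intro_Mangoldt_sum1}, \eqref{eq:intro_Mangoldt_sum3} together with $\Lambda(k)\le\log k$ (the $\log\log n$ entering, exactly as in \eqref{eq:classF_expectation_Delta_n}, through $\sum_{k>k_0}\log(k)k^{-2}=O(\log(k_0)/k_0)$), optimises over the cut-off $k_0$, and finishes with Markov's inequality applied to $\ET{\Delta_n}$. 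I expect the main obstacle, compared with the class $\mathcal F(r,\vartheta,K)$ case, to be precisely the first step: there $h_{n-m}/h_n$ was controlled crudely and directly through Corollary~\ref{cor:haviour_of_hn}, whereas here this ratio must be controlled uniformly over all $m\le n$ by the saddle-point/log-admissibility machinery of \cite[Lemma~4.1]{StZe14a}, and its error term must be tracked carefully enough that the off-diagonal double sum in $\ET{D_{nk}(D_{nk}-1)}$ genuinely factorises.
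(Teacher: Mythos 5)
Your scheme---establish a polynomial-weight analogue of Proposition~\ref{prop:bound_Dnk}, bound $\ET{\Delta_n}$ termwise, and finish with Markov's inequality---cannot work here, and the obstacle is not the one you single out (uniform control of $h_{n-m}/h_n$) but the size of the moments themselves. For $\theta_m=m^{\gamma}$ the saddle-point asymptotics \eqref{eq:algrowth_h_n} give $h_{n-m}/h_n\approx\exp\bigl(-c\,m\,n^{-1/(1+\gamma)}\bigr)$, and hence
\[
\ET{D_{nk}}=\sum_{j\ge 1}(jk)^{\gamma-1}\,\frac{h_{n-jk}}{h_n}\;\asymp\;\frac{1}{k}\,n^{\frac{\gamma}{1+\gamma}}
\]
for small $k$: the total number of cycles is of polynomial order $n^{\gamma/(1+\gamma)}$, not of order $\log n$ as in the class $\mathcal F(r,\vartheta,K)$. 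Already the single term $k=2$ therefore gives $\ET{\Delta_n}\ge\log(2)\bigl(\ET{D_{n2}}-1+\PT{D_{n2}=0}\bigr)\asymp n^{\gamma/(1+\gamma)}$, and no choice of cut-off $k_0$ helps, because for small $k$ the bound $\Delta_{nk}\le D_{nk}$ is essentially an equality ($D_{nk}\gg 1$ with high probability, so $D_{nk}-D^*_{nk}=D_{nk}-1$). Markov's inequality at the level $\log(n)\log\log(n)$ then produces a bound that diverges. Note also that $D_{n2}$ is a sum of weakly correlated cycle counts and concentrates around its polynomially large mean, so the difficulty is not an artifact of crude majorisations: no termwise moment bound on $\Lambda(k)(D_{nk}-D^*_{nk})$ can reach the scale $\log(n)\log\log(n)$, which is why the restriction $0<\gamma<1$ and a completely different mechanism are needed.

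The actual proof in \cite{StZe14a} (whose structure is visible in the proof of the stronger Lemma~\ref{lem:algrowth_exp_equiv_mod_pois_logYn_logOn} in this paper, described by the authors as ``very similar'') takes a genuinely different route. One writes $\log O_n=\psi(n)-R(n)$ with $\psi$ the Chebyshev function and $R(n)=\sum_{k\le n}\Lambda(k)\one_{\{D_{nk}=0\}}$, so that for any cut-off $b$ one has $\Delta_n=\bigl(\log Y_n-\psi(b)\bigr)+\bigl(R(n)-\sum_{k=b+1}^{n}\Lambda(k)\bigr)$. Choosing $b$ slightly larger than $\ET{\log Y_n}\asymp n^{\gamma/(1+\gamma)}\log n$, the first bracket is controlled by the Gaussian concentration of $\log Y_n$ coming from the saddle-point moment generating function (Theorem~\ref{thm:algrowth_mom-gen_logYn}), while the second is dominated by $S(b)=\sum_{k\le b}\Lambda(k)\one_{\{C_k=0\}}$, whose moment generating function is again estimated by saddle-point analysis. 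If you want to prove the lemma you should reconstruct that argument; the analogue of Proposition~\ref{prop:bound_Dnk} plays no role in it.
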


 \begin{remark}
  Notice that that Lemma~\ref{lem:algrowth_closeness} is wrong for $\gamma>1$, see \cite[Remark~4.5]{StZe14a}.
 \end{remark}

\vskip 15pt 
\subsection{Large deviations estimates for $\log O_n$}\label{subsection:algrowth_large_deviation}

From the moment generating function of $\log Y_n$ given in Theorem~\ref{thm:algrowth_mom-gen_logYn} 
we can deduce a classical large deviations result for $\log O_n$. We will show that for any Borel set $B$ 
\begin{align*}
\lim_{n \rightarrow \infty} n^{-\frac{\gamma}{1+\gamma}} \log\mathbb{P}_{\Theta}\bigg(\frac{\log O_n}{n^{\frac{\gamma}{1+\gamma}} \log(n)} \in B\bigg) 
= -\inf_{x\in B} \chi^*(x)
\end{align*}
holds, where
\begin{align*}
\chi^*(x) = \sup_{t \in \R}[t x - \chi(t)]
\end{align*}
is the so-called Fenchel-Legendre transform of $\chi(t)$ given by
\begin{align}\label{eq:algrowth_convex_dual}
\chi(t) \coloneqq \frac{(1+\gamma)\Gamma(\gamma)}{\Gamma(1+\gamma)^{\frac{\gamma}{1+\gamma}}}  \big(e^{\frac{t}{(1+\gamma)^2}}-1\big).
\end{align}
In other words, we will show the following
\begin{theorem}\label{thm:algrowth_large_dev_O_n}
Let $\hat{g}_{\Theta}$ be defined as in (\ref{eq:algrowth_g_Theta}) with $0<\gamma < 1$. The sequence $\log O_n / n^{\frac{\gamma}{1+\gamma}} \log(n)$ satisfies a large deviations principle with rate $n^{\frac{\gamma}{1+\gamma}} $ and rate function given by the convex dual of $\chi(t)$ defined in (\ref{eq:algrowth_convex_dual}).
\end{theorem}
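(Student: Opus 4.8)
The plan is to follow the same two-tier strategy used for the generalized Ewens measure in Section~\ref{section:classF}: first establish the large deviations principle (LDP) for the approximating variable $\log Y_n$, then transfer it to $\log O_n$ using the closeness estimate of Lemma~\ref{lem:algrowth_closeness}. For the first step I would invoke the G\"artner--Ellis theorem. By Theorem~\ref{thm:algrowth_mom-gen_logYn}, with $s = t/(n^{\gamma/(1+\gamma)}\log(n))$ (the natural scaling so that the rate is $n^{\gamma/(1+\gamma)}$), one has
\begin{align*}
\frac{1}{n^{\gamma/(1+\gamma)}} \log \ET{\exp\Big(t\,\frac{\log Y_n}{n^{\gamma/(1+\gamma)}\log(n)}\Big)}
\longrightarrow \chi(t),
\end{align*}
where $\chi$ is the function in \eqref{eq:algrowth_convex_dual}. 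To see this, one expands the exponents in Theorem~\ref{thm:algrowth_mom-gen_logYn}: the dominant contribution comes from $\tilde{\gamma}_{1,s}\,n^{1-1/(1+\gamma+s)} - \tilde{\gamma}_{1,0}\,n^{1-1/(1+\gamma)}$, and since $s\to 0$ as $n\to\infty$ while $s\log(n)\to t/n^{\gamma/(1+\gamma)}$ is of smaller order, a Taylor expansion of $n^{1-1/(1+\gamma+s)} = n^{\gamma/(1+\gamma)}\exp\big(s\log(n)/(1+\gamma)^2 + O(s^2\log(n))\big)$ together with $\tilde{\gamma}_{1,s} = \tilde{\gamma}_{1,0}(1+O(s))$ produces exactly $\chi(t)\,n^{\gamma/(1+\gamma)}(1+o(1))$; the prefactor $\sqrt{\tilde{\gamma}_{2,s}}\,n^{\frac12(\cdots)}$ contributes only $O(\log n)$ to the logarithm, hence vanishes after division by $n^{\gamma/(1+\gamma)}$. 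Since $\chi$ is finite, differentiable and steep on all of $\R$, G\"artner--Ellis gives the full LDP for $\log Y_n/(n^{\gamma/(1+\gamma)}\log(n))$ with good rate function $\chi^*$.

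For the transfer to $\log O_n$, I would show that $\log Y_n/(n^{\gamma/(1+\gamma)}\log(n))$ and $\log O_n/(n^{\gamma/(1+\gamma)}\log(n))$ are exponentially equivalent at speed $n^{\gamma/(1+\gamma)}$, i.e. for every $c>0$
\begin{align*}
\limsup_{n\to\infty} \frac{1}{n^{\gamma/(1+\gamma)}} \log \PT{\Delta_n > c\,n^{\gamma/(1+\gamma)}\log(n)} = -\infty.
\end{align*}
This is considerably stronger than the bound $\PT{\Delta_n \geq \log(n)\log\log(n)}\to 0$ from Lemma~\ref{lem:algrowth_closeness}, so one cannot simply cite it; instead I would estimate $\Delta_n = \sum_{k\le n}\Lambda(k)(D_{nk}-D^*_{nk})$ by splitting at a threshold $\beta(n)$ as in the proof of Lemma~\ref{lem:classF_exp_equiv_logYn_logOn}. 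The low range $k\le\beta(n)$ is controlled by $\sum_{m\le\beta(n)}\log(m)C_m$ via a Markov/Chernoff bound using an analogue of Theorem~\ref{thm:algrowth_mom-gen_logYn} restricted to small cycles (or the Poisson approximation for small cycle counts from \cite{StZe14a}), choosing the tilting parameter $s$ growing slowly; the high range $k>\beta(n)$ is controlled by $\sum_{k>\beta(n)}\Lambda(k)D_{nk}(D_{nk}-1)$ using second-moment bounds on the $D_{nk}$ analogous to Proposition~\ref{prop:bound_Dnk} (these follow from $\ET{D_{nk}} = \sum_m (\theta_m/m)\one_{\{k\mid m\}} h_{n-m}/h_n$ together with the asymptotics \eqref{eq:algrowth_h_n} and the saddle-point analysis). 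Once exponential equivalence is established, Theorem~4.2.13 in the standard reference on large deviations (Dembo--Zeitouni) yields that $\log O_n/(n^{\gamma/(1+\gamma)}\log(n))$ satisfies the same LDP with rate function $\chi^*$, completing the proof.

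The main obstacle I anticipate is the high-range part of the exponential equivalence: obtaining a bound on $\PT{\sum_{k>\beta(n)}\Lambda(k)D_{nk}(D_{nk}-1) > c\,n^{\gamma/(1+\gamma)}\log(n)}$ whose logarithm, divided by $n^{\gamma/(1+\gamma)}$, tends to $-\infty$. A Markov inequality on the first moment only gives a polynomial decay of the probability, which is not enough at this speed; one needs either higher moments of the $D_{nk}$ (a polynomial-weights analogue of the delicate moment condition \eqref{eq:classF_large_dev_moments}) or a direct exponential bound. The restriction $0<\gamma<1$ is essential here, both because Lemma~\ref{lem:algrowth_closeness} fails for $\gamma>1$ and because the typical cycle length $n^{1/(1+\gamma)}$ must be subdominant relative to the scales involved; a careful choice of $\beta(n)$ (somewhere between the small-cycle regime $o(n^{1/(1+\gamma)})$ and $n$) will be needed to make both ranges work simultaneously, and verifying that such a $\beta(n)$ exists is the technical heart of the argument.
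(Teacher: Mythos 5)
Your first step contains a scaling error. For the G\"artner--Ellis theorem applied to $Z_n = \log Y_n/(n^{\gamma/(1+\gamma)}\log(n))$ at speed $a_n = n^{\gamma/(1+\gamma)}$, the quantity that must converge to $\chi(t)$ is $a_n^{-1}\log\ET{e^{t a_n Z_n}} = n^{-\gamma/(1+\gamma)}\log\ET{\exp\bigl(t\log Y_n/\log(n)\bigr)}$, i.e.\ one must take $s = t/\log(n)$ in Theorem~\ref{thm:algrowth_mom-gen_logYn}, which is exactly what the paper does. With your choice $s = t/(n^{\gamma/(1+\gamma)}\log(n))$ one has $s\log(n) = t\,n^{-\gamma/(1+\gamma)}\to 0$, so $\tilde{\gamma}_{1,s}\,n^{1-1/(1+\gamma+s)} - \tilde{\gamma}_{1,0}\,n^{1-1/(1+\gamma)} = \tilde{\gamma}_{1,0}\,t/(1+\gamma)^2 + o(1) = O(1)$, and after dividing by $n^{\gamma/(1+\gamma)}$ the limit is $0$, not $\chi(t)$. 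Your claim that the expansion ``produces exactly $\chi(t)\,n^{\gamma/(1+\gamma)}$'' requires $s\log(n)\to t$, hence $s=t/\log(n)$. This is a one-line fix, but the G\"artner--Ellis condition as you display it is false.

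The more serious issue is the transfer step, which you leave open and which, along the route you propose, cannot be closed with the available tools. You correctly observe that splitting $\Delta_n$ at a threshold $\beta(n)$ and bounding the high range by $\sum_{k>\beta(n)}\Lambda(k)D_{nk}(D_{nk}-1)$ would require control of high moments of the $D_{nk}$ --- in the generalized Ewens section this is precisely the unproven moment condition \eqref{eq:classF_large_dev_moments}, and no polynomial-weight analogue is established. The paper avoids this entirely by a different decomposition: writing $D^*_{nk} = 1-\one_{\{D_{nk}=0\}}$ gives $\log O_n = \psi(n) - R(n)$ with $R(n)=\sum_{k\le n}\Lambda(k)\one_{\{D_{nk}=0\}}$, whence $\log Y_n - \log O_n = \bigl(\log Y_n - \psi(b)\bigr) + \bigl(R(n)-\sum_{k=b+1}^n\Lambda(k)\bigr)$ for any cutoff $b$. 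Taking $b = n^{\gamma/(1+\gamma)}\log(n)\alpha(n)$ with $\alpha(n)\to\infty$, the first bracket is controlled by the Gaussian tail of $\log Y_n$, since $\psi(b)\sim b$ vastly exceeds $\ET{\log Y_n} = O(n^{\gamma/(1+\gamma)}\log(n))$; the second is bounded above by $S(b)=\sum_{k\le b}\Lambda(k)\one_{\{C_k=0\}}$, whose moment generating function is accessible by the saddle-point method and admits a Chernoff bound (this is Lemma~\ref{lem:algrowth_exp_equiv_mod_pois_logYn_logOn}, of which the needed exponential equivalence is a weak consequence). No second or higher moments of $D_{nk}$ are ever used. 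So your diagnosis of the obstacle is accurate, but the resolution is not a harder moment estimate: it is a different decomposition of $\Delta_n$ that replaces $D_{nk}-D^*_{nk}$ by the indicator $\one_{\{D_{nk}=0\}}$ and exploits the deterministic bound $\log O_n\le\psi(n)$.
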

\begin{proof}
 Let us first check that $\log Y_n / n^{\frac{\gamma}{1+\gamma}} \log(n)$ satisfies this large deviations estimate. By the G\"artner-Ellis theorem it suffices to prove
\begin{align}\label{eq:algrowth_Gaertner-Ellis}
\lim_{n \rightarrow \infty} n^{-\frac{\gamma}{1+\gamma}} \log \ET{\exp\Big(t \, \frac{\log Y_n}{\log(n)}\Big)}= \chi(t).
 \end{align}
In view of Theorem~\ref{thm:algrowth_mom-gen_logYn} we have to show that for $t^* = t / \log(n)$
\begin{align*}
\lim_{n \rightarrow \infty} n^{-\frac{\gamma}{1+\gamma}} \bigg(\tilde{\gamma}_{1,t^*} \, n^{1 -\frac{1}{1+\gamma+t^*}} -\tilde{\gamma}_{1,0} \, n^{1 -\frac{1}{1+\gamma}} \bigg) 
= \chi(t)
\end{align*}
holds with
\begin{align*}
\tilde{\gamma}_{1,t} =  \frac{(1+\gamma +t)\Gamma(\gamma +t)}{\Gamma(1+\gamma+t)^{1-\frac{1}{1+\gamma+t}}}  .
\end{align*}
This is true since $\Gamma(\gamma+x) = \Gamma(\gamma) + O(x)$ as $x \rightarrow 0$ and therefore
\begin{align}
\tilde{\gamma}_{1,t^*}
& = \tilde{\gamma}_{1,0} + O\big(\log^{-1}(n)\big), \label{eq:algrowth_classical_LDP_approx1}\\
n^{1 -\frac{1}{1+\gamma+t^*}} 
&= n^{1 -\frac{1}{1+\gamma}} \Big(1+ \sum_{k=1}^{\infty} \frac{t^k}{(1+\gamma)^{2k} k!} + O\big(\log^{-1}(n)\big)\Big) \nonumber\\
&= n^{\frac{\gamma}{1+\gamma}} \Big(e^{\frac{t}{(1+\gamma)^2}} + O\big(\log^{-1}(n)\big) \Big). \label{eq:algrowth_classical_LDP_approx2}
\end{align}
Similar the proof of Theorem~\ref{thm:classF_large_dev_logO_n}, it remains to show that $\log Y_n / n^{\frac{\gamma}{1+\gamma}} \log(n)$ and $\log O_n / n^{\frac{\gamma}{1+\gamma}} \log(n)$ are exponentially equivalent with rate $n^{\frac{\gamma}{1+\gamma}}$. This is subject of the following lemma.
\end{proof}
\begin{lemma}\label{lem:algrowth_exp_equiv_logYn_logOn}
Let $\hat{g}_{\Theta}$ be as in (\ref{eq:algrowth_g_Theta}) with $0<\gamma< 1$, then for any $c>0$ the following holds:
\begin{align*}
\limsup_{n \rightarrow \infty} n^{-\frac{\gamma}{1+\gamma}}
\log \PT{\log Y_n - \log O_n >c \, n^{\frac{\gamma}{1+\gamma}} \log(n)} = -\infty.
\end{align*}
\end{lemma}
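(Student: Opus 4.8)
The plan is to follow the strategy of Lemma~\ref{lem:classF_exp_equiv_logYn_logOn}, replacing the singularity-analysis inputs by the saddle-point inputs available in the polynomial regime. Write $\Delta_n=\log Y_n-\log O_n=\sum_{k\le n}\Lambda(k)(D_{nk}-D_{nk}^{*})$ and recall the elementary bounds $D_{nk}-D_{nk}^{*}\le D_{nk}$ and $D_{nk}-D_{nk}^{*}\le D_{nk}(D_{nk}-1)$. Fix a threshold $\beta(n)\to\infty$, to be chosen, and split $\Delta_n=\Delta_n^{<}+\Delta_n^{>}$ with $\Delta_n^{<}=\sum_{k\le\beta(n)}\Lambda(k)(D_{nk}-D_{nk}^{*})$ and $\Delta_n^{>}=\sum_{\beta(n)<k\le n}\Lambda(k)(D_{nk}-D_{nk}^{*})$. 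Using the same $\limsup$-of-$\log(a_n+b_n)$ observation as in the proof of Lemma~\ref{lem:classF_exp_equiv_logYn_logOn}, it is enough to show that $n^{-\gamma/(1+\gamma)}\log\PT{\Delta_n^{<}>\tfrac{c}{2}n^{\gamma/(1+\gamma)}\log n}\to-\infty$ and the same statement with $\Delta_n^{<}$ replaced by $\Delta_n^{>}$.

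For $\Delta_n^{>}$ I would argue as in Lemma~\ref{lem:classF_exp_equiv_log_tildeOn_logOn}. Since a prime power $k>\beta(n)$ can only divide integers $m>\beta(n)$, one has $\Delta_n^{>}\le\sum_{k>\beta(n)}\Lambda(k)D_{nk}\le\sum_{m>\beta(n)}\log(m)\,C_m\le\log(n)\,T_n$, where $T_n\coloneqq\sum_{m>\beta(n)}C_m$ is the number of cycles of length exceeding $\beta(n)$, so it suffices to bound $\PT{T_n>\tfrac{c}{2}n^{\gamma/(1+\gamma)}}$. By the cycle index theorem, $\ET{e^{sT_n}}=h_n^{-1}[t^n]\exp\bigl(\hat g_{\Theta}(t,0)+(e^{s}-1)\sum_{m>\beta(n)}\tfrac{\theta_m}{m}t^m\bigr)$; since $\hat g_{\Theta}$ is log-admissible (\cite[Lemma~4.1]{StZe14a}) so is this perturbation, and a saddle-point evaluation — the polynomial-weight analogue of \cite[Theorem~4.3]{NiStZe13} — yields $\log\ET{e^{sT_n}}=(e^{s}-1)\mu_n+o(\mu_n)$ with $\mu_n=\sum_{m>\beta(n)}\tfrac{\theta_m}{m}r_n^{m}$ and $r_n$ the saddle point. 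Choosing $\beta(n)$ to grow slightly faster than the typical cycle length $n^{1/(1+\gamma)}$ makes $\mu_n=o\bigl(n^{\gamma/(1+\gamma)}\bigr)$, and then the Chernoff bound $\PT{T_n>\tfrac{c}{2}n^{\gamma/(1+\gamma)}}\le\exp\bigl(-\tfrac{c}{2}s\,n^{\gamma/(1+\gamma)}+(e^{s}-1)\mu_n\bigr)$ evaluated at a slowly growing $s=s(n)$ forces $n^{-\gamma/(1+\gamma)}\log\PT{T_n>\tfrac{c}{2}n^{\gamma/(1+\gamma)}}\to-\infty$, exactly as in the generalized Ewens case.

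For $\Delta_n^{<}$ I would run the Chernoff argument of Lemma~\ref{lem:classF_exp_equiv_logYn_logOn}: introduce the event $A=\{\Delta_n^{<}\le(\text{typical order})\}$, split $\ET{e^{s\Delta_n^{<}}}=\ET{e^{s\Delta_n^{<}}\one_A}+\ET{e^{s\Delta_n^{<}}\one_{A^{c}}}$, bound the first term trivially, and bound the second by Cauchy--Schwarz together with (i) a bound for $\PT{A^{c}}$ coming from Markov and the closeness estimate Lemma~\ref{lem:algrowth_closeness}, and (ii) bounds on the higher moments $\ET{(\Delta_n^{<})^{m}}$. The inputs here are the polynomial-parameter analogues of Proposition~\ref{prop:bound_Dnk}, namely $\ET{D_{nk}}=O(n^{\gamma/(1+\gamma)}/k)$ and $\ET{D_{nk}(D_{nk}-1)}=O(n^{2\gamma/(1+\gamma)}/k^{2})$ uniformly in $k$, which follow from $\ET{C_m}=\tfrac{\theta_m}{m}\tfrac{h_{n-m}}{h_n}$ and the asymptotics \eqref{eq:algrowth_h_n}, together with the Poisson approximation of the small cycle counts established in \cite{StZe14a}, which supplies the (approximate) independence needed to pass from one- and two-point estimates to a bound on all moments. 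Tuning $\beta(n)$ and $s=s(n)$ as in the generalized Ewens proof then completes the argument.

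The main obstacle is, as there, the $\Delta_n^{<}$ piece: to obtain a Chernoff-type rather than merely polynomial, Markov-type bound one needs an estimate of the shape $\ET{(\Delta_n^{<})^{m}}\le k_m\,(n^{\gamma/(1+\gamma)}\log n)^{m}$ with $k_m=O(e^{\alpha m})$, which is the direct analogue of the moment condition \eqref{eq:classF_large_dev_moments}. The advantage over the generalized Ewens setting is that the cycle counts entering $\Delta_n^{<}$ are genuinely close to independent Poisson variables, so one can hope to prove such a bound outright; the technical heart of the argument is then to control the Poisson-approximation error uniformly over all moments $m$ and all $k\le\beta(n)$ simultaneously, which is delicate because both the number of summands in $\Delta_n^{<}$ and the typical size of $D_{nk}$ for small $k$ grow with $n$.
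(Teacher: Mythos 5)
Your proposal transplants the strategy of Lemma~\ref{lem:classF_exp_equiv_logYn_logOn} to the polynomial regime, and this is where it breaks down: in the generalized Ewens setting that argument only works because the unproven moment condition \eqref{eq:classF_large_dev_moments} is taken as an explicit \emph{hypothesis} of Theorem~\ref{thm:classF_large_dev_logO_n}. The present lemma carries no such hypothesis, so the $\Delta_n^{<}$ piece of your argument rests on a bound $\ET{(\Delta_n^{<})^{m}}\le k_m\,(n^{\gamma/(1+\gamma)}\log n)^{m}$ with $k_m=O(e^{\alpha m})$ that you acknowledge you cannot supply. The one- and two-point estimates $\ET{D_{nk}}$ and $\ET{D_{nk}(D_{nk}-1)}$ only give a polynomial (Markov-type) decay, which is far from the super-exponential decay the statement requires; and the Poisson approximation of \cite{StZe14a} is not quantified uniformly over all moments $m$ and all $k\le\beta(n)$, so it cannot be invoked to close this. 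As written, the proposal is therefore not a proof.

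The paper avoids this obstacle entirely by a different decomposition, proved in the stronger Lemma~\ref{lem:algrowth_exp_equiv_mod_pois_logYn_logOn}. From \eqref{eq:log_On_von_mangold} one has $\log O_n=\psi(n)-R(n)$ with $\psi$ the Chebyshev function \eqref{eq:intro_Chebychev} and $R(n)=\sum_{k\le n}\Lambda(k)\one_{\{D_{nk}=0\}}$, whence for any cutoff $b$,
\begin{align*}
\log Y_n-\log O_n\;\le\;\bigl(\log Y_n-\psi(b)\bigr)+\sum_{k\le b}\Lambda(k)\one_{\{D_{nk}=0\}}
\;\le\;\bigl(\log Y_n-\psi(b)\bigr)+\sum_{k\le b}\Lambda(k)\one_{\{C_k=0\}}.
\end{align*}
Taking $b=n^{\gamma/(1+\gamma)}\log(n)\alpha(n)$ with $\alpha(n)\to\infty$, the first term is controlled by the Gaussian tail coming from the central limit theorem for $\log Y_n$ (Theorem~\ref{thm:algrowth_mom-gen_logYn}), since $\psi(b)\sim b$ exceeds the mean of $\log Y_n$ by many standard deviations; the second term $S(b)=\sum_{k\le b}\Lambda(k)\one_{\{C_k=0\}}$ involves only the single indicators $\one_{\{C_k=0\}}$, whose joint moment generating function is accessible by a direct saddle-point computation, so a Chernoff bound applies with no moment condition whatsoever. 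This substitution of the deterministic $\psi(n)$ for the random $\sum_k\Lambda(k)D^{*}_{nk}$ is exactly the idea your proposal is missing. (Your treatment of the large-$k$ piece $\Delta_n^{>}$ via the cycle count $T_n$ of long cycles is sound in outline, though the uniformity in growing $s$ of the asserted mod-Poisson expansion of $\ET{e^{sT_n}}$ would still need to be established for polynomial weights rather than quoted from the generalized Ewens literature.)
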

\begin{proof}
We will prove a stronger version of this asymptotic in Lemma~\ref{lem:algrowth_exp_equiv_mod_pois_logYn_logOn}.
\end{proof}
The statement of Theorem~\ref{thm:algrowth_large_dev_O_n} can be refined. Recall the notion of mod-$\phi$ convergence which was briefly explained in Section~\ref{subsection:classF_mod_conv_local_limit}. Here, we prove mod-Poisson convergence for $\log Y_n$, appropriately rescaled, in terms of moment generating functions. We deduce the following precise deviations estimate for $\log O_n$:
\begin{theorem}\label{thm:algrowth_precise_dev}
Let $\hat{g}_{\Theta}$ be as in (\ref{eq:algrowth_g_Theta}) with $0<\gamma < 1$. Define 
\begin{align*}
\mathcal{O}_n \coloneqq \frac{(1+\gamma)^2\log O_n - \lambda_n \log(n)}{\lambda_n^{1/3}\log(n)}
\end{align*}
and
\begin{align*}
\lambda_n \coloneqq \tilde{\gamma}_{1,0} \, n^{\frac{\gamma}{1+\gamma}} (1+O(\log^{-1}(n))),
\end{align*}
where $\tilde{\gamma}_{1,0}$ is  as in Theorem~\ref{thm:algrowth_mom-gen_logYn}. Then for any $x>0$ the following asymptotic holds:
\begin{align*}
 \PT{\mathcal{O}_n \geq x\lambda_n^{1/3}} = \frac{\exp(-\lambda_n^{1/3} \frac{x^2}{2} + \frac{x^3}{6})}{x \lambda_n^{1/6} \sqrt{2\pi}}  \,  (1+o(1)).
\end{align*}
\end{theorem}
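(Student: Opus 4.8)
The proof follows the scheme of Theorem~\ref{thm:classF_mod_large-dev1}. First I would make precise the \emph{mod-Poisson convergence of the rescaled $\log Y_n$} announced before the statement, in the equivalent mod-Gaussian form needed for the deviations estimate: namely, that
\[
\mathcal{Y}_n \coloneqq \frac{(1+\gamma)^2\log Y_n - \lambda_n\log(n)}{\lambda_n^{1/3}\log(n)}
\]
is mod-$\mathcal{N}(0,\sigma_n^2)$ convergent with $\sigma_n^2 = \lambda_n^{1/3}$ and limiting function $\psi(x) = e^{x^3/6}$. Then, exactly as in the proof of Theorem~\ref{thm:classF_mod_large-dev1}, Theorem~3.2 in \cite{FeMeNi13} (applied with $t_n = \sigma_n^2$, $F(x) = \eta(x) = x^2/2$ and $\varphi = \psi$) gives $\PT{\mathcal{Y}_n \geq x\sigma_n^2} = \frac{\exp(-\sigma_n^2 x^2/2 + x^3/6)}{x\sigma_n\sqrt{2\pi}}(1+o(1))$, i.e.\ the claimed estimate with $\log Y_n$ in place of $\log O_n$. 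Finally one transfers the estimate to $\mathcal{O}_n$ by exponential equivalence with rate $\lambda_n^{1/3}$, which is the content of Lemma~\ref{lem:algrowth_exp_equiv_mod_pois_logYn_logOn}.

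For the mod-convergence step I would insert $s = \frac{t(1+\gamma)^2}{\lambda_n^{1/3}\log(n)}$ into Theorem~\ref{thm:algrowth_mom-gen_logYn} and Taylor-expand to third order in $s$: the exponent satisfies $1 - \frac{1}{1+\gamma+s} = \frac{\gamma}{1+\gamma} + \frac{s}{(1+\gamma)^2} - \frac{s^2}{(1+\gamma)^3} + \frac{s^3}{(1+\gamma)^4} + O(s^4)$, and $\tilde\gamma_{1,s} = \tilde\gamma_{1,0} + O(s)$, $\tilde\gamma_{2,s} = 1 + O(s)$ since both are analytic at $0$. Collecting terms, $\tilde\gamma_{1,s}\,n^{1 - 1/(1+\gamma+s)} - \tilde\gamma_{1,0}\,n^{\gamma/(1+\gamma)}$ behaves, after this substitution, like $\lambda_n(e^{t/\lambda_n^{1/3}}-1)$ up to lower order, which is precisely the cumulant generating function of a centred and $\lambda_n^{1/3}$-rescaled Poisson$(\lambda_n)$: its quadratic coefficient is $\sim\lambda_n/\lambda_n^{2/3} = \lambda_n^{1/3}$ (second cumulant of the Poisson), the surviving cubic coefficient is $\sim\lambda_n/\lambda_n = 1$, giving $\psi(x) = e^{x^3/6}$, and all higher cumulants carry a negative power of $\lambda_n$ and disappear. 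Here $\lambda_n$ has to be chosen as the corresponding centring sequence — which forces the expansion $\lambda_n = \tilde\gamma_{1,0}n^{\gamma/(1+\gamma)}(1 + O(\log^{-1}(n)))$ of the statement; the leading term alone is not enough to centre $\mathcal{Y}_n$.

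The exponential-equivalence step is the more routine one, because $\gamma<1$ makes $\Delta_n = \log Y_n - \log O_n$ genuinely small (Lemma~\ref{lem:algrowth_closeness}) and no analogue of the moment condition of Section~\ref{subsection:classF_large_dev} is needed. One shows $\frac{1}{\lambda_n^{1/3}}\log\PT{\Delta_n > c\,\lambda_n^{1/3}\log(n)} \to -\infty$ for every $c>0$ by splitting $\Delta_n$ at a threshold $\beta(n)$ as in the proof of Lemma~\ref{lem:classF_exp_equiv_logYn_logOn}: the small-$k$ part is dominated by $\sum_{m\le\beta(n)}\log(m)C_m$, whose exponential moments are controlled through \eqref{eq:generating_series_logYn_allgemein} and the saddle-point estimates of \cite{StZe14a}, while the large-$k$ part is handled by Markov's inequality together with bounds on $\E_\Theta[D_{nk}(D_{nk}-1)]$ in the polynomial model (from \cite{StZe14a}) and \eqref{eq:intro_Mangoldt_sum3}.

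The main obstacle is the first step: one must verify which of the lower-order contributions in Theorem~\ref{thm:algrowth_mom-gen_logYn} (the $\tilde\gamma_{2,s}$-prefactor, the sub-leading parts of $\tilde\gamma_{1,s}\,n^{1-1/(1+\gamma+s)}$, and the $o(1)$ error — whose derivatives in $s$ are needed, hence one must know it is $o(1)$ uniformly on a complex neighbourhood of $s=0$ so that Cauchy's inequalities apply) actually affect the cumulants at the scales $\lambda_n^{1/3}$ and $1$, and then pin down the centring $\lambda_n$ so that $\mathcal{Y}_n$ is correctly centred with variance parameter $\lambda_n^{1/3}$ (up to the admissible $1+O(\log^{-1}(n))$ ambiguity). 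Once the mod-Gaussian statement is in hand, the remaining steps are mechanical.
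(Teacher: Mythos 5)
Your first two steps --- extracting mod-Gaussian convergence of the centred, $\lambda_n^{1/3}$-rescaled $\log Y_n$ from Theorem~\ref{thm:algrowth_mom-gen_logYn} (with the complex-$s$ caveat you correctly flag) and feeding it into Theorem~3.2 of \cite{FeMeNi13} --- are exactly what the paper does; the paper merely phrases the first step as mod-Poisson convergence of $(1+\gamma)^2\log Y_n/\log(n)$ with parameter $\lambda_n$ and then passes to the mod-Gaussian form because the reference law in \cite{FeMeNi13} must not be lattice-distributed.

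The gap is in your sketch of the exponential-equivalence step. You propose to split $\Delta_n=\sum_k\Lambda(k)(D_{nk}-D_{nk}^*)$ at a threshold $\beta(n)$ in $k$, as in Lemma~\ref{lem:classF_exp_equiv_logYn_logOn}, and to handle the large-$k$ part by Markov's inequality with bounds on $\ET{D_{nk}(D_{nk}-1)}$. This cannot close at the required scale: in the polynomial model $\ET{D_{nk}}\asymp n^{\gamma/(1+\gamma)}/k$ and $\ET{D_{nk}(D_{nk}-1)}\asymp n^{2\gamma/(1+\gamma)}/k^2$, so first-moment (or any fixed-order-moment) Markov gives
\begin{align*}
\PT{\textstyle\sum_{k\geq\beta(n)}\Lambda(k)(D_{nk}-D_{nk}^*)>\tfrac{c}{2}\lambda_n^{1/3}\log(n)}
=O\Big(\frac{n^{2\gamma/(1+\gamma)}}{\beta(n)\,\lambda_n^{1/3}\log(n)}\Big),
\end{align*}
which is only polynomially small in $\beta(n)\leq n$, whereas you need it to be $o\bigl(e^{-C\lambda_n^{1/3}}\bigr)=o\bigl(e^{-Cn^{\gamma/(3(1+\gamma))}}\bigr)$ for every $C>0$. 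Achieving that forces exponential-moment control of the large-$k$ part, which is precisely the unproven moment condition \eqref{eq:classF_large_dev_moments} of Section~\ref{subsection:classF_large_dev}; so your remark that no analogue of that condition is needed is true of the paper's argument but not of the route you describe. The paper's Lemma~\ref{lem:algrowth_exp_equiv_mod_pois_logYn_logOn} avoids the $D_{nk}$'s entirely: it writes $\log O_n=\psi(n)-R(n)$ with $R(n)=\sum_{k}\Lambda(k)\one_{\{D_{nk}=0\}}$, picks $b=n^{\gamma/(1+\gamma)}\log(n)\alpha(n)$, controls $\PT{\log Y_n-\psi(b)>\cdots}$ by the Gaussian upper tail of $\log Y_n$, and bounds $R(n)-\sum_{k>b}\Lambda(k)\leq S(b)=\sum_{k\leq b}\Lambda(k)\one_{\{C_k=0\}}$, whose moment generating function is directly accessible by the saddle-point method since $S(b)$ is a plain functional of the cycle counts. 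The underlying mechanism is that for $\gamma<1$ essentially every cycle length up to $b$ occurs, so $\log O_n$ is close to the deterministic quantity $\psi(b)$; no control of $D_{nk}-D_{nk}^*$ for large $k$ is ever required. Note finally that the paper only proves (and only needs) the equivalence at the threshold $c\,n^{2\gamma/(3(1+\gamma))}\sqrt{\log(n)}=g(n)\lambda_n^{1/3}\log(n)$ with $g(n)=o(\lambda_n^{1/3})$, not at the smaller threshold $c\,\lambda_n^{1/3}\log(n)$ you assert.
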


\begin{proof}
Let us first check that
\begin{align*}
\mathcal{Y}_n \coloneqq \frac{(1+\gamma)^{2}\log Y_n}{\log(n)}
\end{align*}
satisfies the required precise deviations estimate. Indeed, $\mathcal{Y}_n$ is mod-Poisson convergent with parameter $\lambda_n$  and limiting function $\psi(t) = e^{t/2}$, that is
$$ \lim_{n \rightarrow \infty} e^{-\lambda_n (e^t-1)} \ET{e^{t \mathcal{Y}_n}} = e^{t/2}.$$

This follows directly from the moment generating function of $\log Y_n$ together with \eqref{eq:algrowth_classical_LDP_approx1} and \eqref{eq:algrowth_classical_LDP_approx2}. 
Notice that this convergence is surprising since the rescaling by $\log(n)$ in $\mathcal{Y}_n$ is relatively insignificant compared to the order of $\log Y_n$ which 
is $n^{\frac{\gamma}{1+\gamma}} \log(n)$. This statement suggests that $\log Y_n$ is indeed close to a Poisson random variable. 
However, the rescaling is too small to deduce a Poisson behavior of $\log O_n$.
 
\begin{remark}
  We have computed the moment generating function of $\log Y_n$ in Theorem~\ref{thm:algrowth_mom-gen_logYn} only for $s$ reel. 
  However, we require for the mod-Poisson convergence of $\mathcal{Y}_n$ above and the mod-Gaussian convergence below that Theorem~\ref{thm:algrowth_mom-gen_logYn} is also 
  valid for complex values of $s$ for $s$ in a small neighbourhood of $0$. This is indeed true and can be proven complete similarly to Theorem~\ref{thm:algrowth_mom-gen_logYn}. 
  One only has to verify that the asymptotic behaviour of $\Li_{\alpha}$ in \eqref{eq:algrowth_asypm_polylog} is also valid for complex $\alpha$. This can be proven with precisely the same
  argumentation as for reel $\alpha$, see for instance \cite[Section~VI.8.]{FlSe09}. 
 \end{remark}

Now, similarly to the proof of Theorem~\ref{thm:classF_mod_large-dev1}, 
we want to apply Theorem 3.2 in \cite{FeMeNi13} in order to deduce the large deviations result. 
This theorem requires mod-$\phi$ convergence where the reference law is lattice distributed. Hence, we cannot work directly with the mod-Poisson convergence. 
However, notice that mod-Poisson convergence with growing parameters implies mod-Gaussian convergence:
\begin{align*}
\mathcal{\widetilde{Y}}_n \coloneqq \frac{\mathcal{Y}_n - \lambda_n}{\lambda_n^{1/3}}
\end{align*}
is mod-$\mathcal{N}(0,\lambda_n^{1/3})$ convergent with limiting function $\Phi(t) = e^{t^3/6}$. 
Now apply Theorem 3.2 in \cite{FeMeNi13} with $\beta_n = \lambda_n^{1/3}$, $F(x) = x^2/2 = \eta(x)$ and $h(x)=x$ to obtain that $\mathcal{\widetilde{Y}}_n$ satisfies the required estimate.

It remains to transfer the estimate to $\mathcal{O}_n$ as defined in Theorem~\ref{thm:algrowth_precise_dev}. Clearly, 
\begin{align*}
 \PT{\mathcal{O}_n \geq x\lambda_n^{1/3}}
 \leq  \PT{\mathcal{\widetilde{Y}}_n \geq x\lambda_n^{1/3}}.
\end{align*}
For the reverse direction, let $g$ be a positive function such that $g(n)=o(\lambda_n^{1/3})$. Then
\begin{align*}
 \PT{\mathcal{\widetilde{Y}}_n \geq x\lambda_n^{1/3} + g(n)}
 \leq  \PT{\mathcal{O}_n \geq x\lambda_n^{1/3}} + \PT{\Delta_n \geq g(n)\lambda_n^{1/3}\log(n)}
\end{align*}
holds and we also have
\begin{align*}
 \PT{\mathcal{\widetilde{Y}}_n \geq x\lambda_n^{1/3} + g(n)} =  \PT{\mathcal{\widetilde{Y}}_n \geq x\lambda_n^{1/3} } \big(1+o(1) \big). 
\end{align*}
Finally, to complete the proof we need to find an appropriate $g(n)=o(\lambda_n^{1/3})$ such that
\begin{align*}
\lim_{n \rightarrow \infty}\lambda_n^{-1/3} \log \PT{\Delta_n \geq g(n)\lambda_n^{1/3}\log(n)} = - \infty.
\end{align*}
The following lemma proves that this holds for $g(n)=n^{\frac{\gamma}{3(1+\gamma)}} / \sqrt{\log(n)}$.
\end{proof}

\begin{lemma}\label{lem:algrowth_exp_equiv_mod_pois_logYn_logOn}
Let $\hat{g}_{\Theta}$ be as in (\ref{eq:algrowth_g_Theta}) with $0<\gamma < 1$, then for any $c>0$ the following holds:
\begin{align*}
\lim_{n \rightarrow \infty} n^{-\frac{\gamma}{3(1+\gamma)}}
\log \PT{\log Y_n - \log O_n >c \, n^{\frac{2\gamma}{3(1+\gamma)}}  \sqrt{\log(n)}} = -\infty.
\end{align*}
\end{lemma}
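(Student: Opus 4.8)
The plan is to reproduce, in the saddle-point framework of Section~\ref{subsection:algrowth_preliminaries}, the exponential-equivalence argument used in the proofs of Lemma~\ref{lem:classF_exp_equiv_log_tildeOn_logOn} and Lemma~\ref{lem:classF_exp_equiv_logYn_logOn}. Write $\log Y_n-\log O_n=\sum_{k\le n}\Lambda(k)(D_{nk}-D_{nk}^*)$ and fix a threshold $\beta(n)\to\infty$ to be chosen at the end. Splitting the sum at $\beta(n)$ and using $D_{nk}-D_{nk}^*\le D_{nk}$ on the low block gives
\begin{align*}
\log Y_n-\log O_n\le\underbrace{\sum_{k\le\beta(n)}\Lambda(k)\,D_{nk}}_{=:A_n}+\underbrace{\sum_{\beta(n)<k\le n}\Lambda(k)\bigl(D_{nk}-D_{nk}^*\bigr)}_{=:B_n}.
\end{align*}
By the elementary identity $\limsup_n g(n)^{-1}\log(a_n+b_n)=\max\{\limsup_n g(n)^{-1}\log a_n,\ \limsup_n g(n)^{-1}\log b_n\}$ already used in the proof of Lemma~\ref{lem:classF_exp_equiv_logYn_logOn}, it suffices to prove the asserted $-\infty$ limit with $\log Y_n-\log O_n$ replaced separately by $A_n$ and by $B_n$.

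For the low block, $A_n=\sum_{m}w_m\,C_m$ with $w_m:=\sum_{k\le\beta(n),\,k\mid m}\Lambda(k)\le\log m$, so the cycle index theorem (Lemma~2.3 in \cite{StZe14a}) gives, as formal power series,
\begin{align*}
h_n\,\ET{e^{sA_n}}=[t^n]\exp\Bigl(\hat{g}_{\Theta}(t,0)+\sum_{m\ge1}\bigl(e^{sw_m}-1\bigr)\tfrac{\theta_m}{m}t^m\Bigr).
\end{align*}
I would then verify that this perturbed exponent is log-admissible in the sense of Definition~2.8 in \cite{StZe14a}, with error terms uniform in the slowly diverging Laplace parameter $s$ and in $\beta(n)$ for $\beta(n)$ below a suitable power of $n$ --- a routine variant of \cite[Lemma~4.1]{StZe14a} --- and re-run the saddle-point estimate behind Theorem~\ref{thm:algrowth_mom-gen_logYn} to obtain an asymptotic for $\ET{e^{sA_n}}$. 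Plugging this into an exponential Markov inequality with $s=\log\log(n)/\log(n)$, exactly as in the treatment of $\sum_{m\le\beta(n)}\log(m)C_m$ in the proof of Lemma~\ref{lem:classF_exp_equiv_logYn_logOn}, then yields $n^{-\gamma/(3(1+\gamma))}\log\PT{A_n>\tfrac{c}{2}n^{2\gamma/(3(1+\gamma))}\sqrt{\log n}}\to-\infty$, provided $\beta(n)$ is chosen small enough.

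For the high block, $D_{nk}-D_{nk}^*>0$ forces a cycle of length $\ge k$, so
\begin{align*}
B_n\le\sum_{\beta(n)<k\le n}\Lambda(k)\,D_{nk}\le\log(n)\sum_{m>\beta(n)}C_m=\log(n)\,T\bigl(\beta(n),n\bigr),
\end{align*}
and it is enough to show $n^{-\gamma/(3(1+\gamma))}\log\PT{T(\beta(n),n)>c'\,n^{2\gamma/(3(1+\gamma))}/\sqrt{\log n}}\to-\infty$. By the cycle index theorem $h_n\ET{e^{sT(\beta(n),n)}}=[t^n]\exp\bigl(\hat{g}_{\Theta}(t,0)+(e^s-1)\sum_{m>\beta(n)}\tfrac{\theta_m}{m}t^m\bigr)$, and the saddle-point estimate of Section~\ref{subsection:algrowth_preliminaries} gives Poisson-type behaviour $\log\ET{e^{sT(\beta(n),n)}}=(e^s-1)\Lambda_n(\beta)(1+o(1))$ with $\Lambda_n(\beta)\approx\sum_{m>\beta(n)}\tfrac{\theta_m}{m}t_n^m$ and $t_n$ the saddle. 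Choosing $\beta(n)$ large enough that $\Lambda_n(\beta)$ is negligible next to $n^{\gamma/(3(1+\gamma))}$ and applying exponential Markov with a slowly diverging $s$ (say $s=\log\log\log n$, as in the proof of Lemma~\ref{lem:classF_exp_equiv_log_tildeOn_logOn}) completes this block. One then fixes $\beta(n)$ in the intersection of the two admissible ranges.

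The step I expect to be the main obstacle is precisely making the two blocks compatible. The low block wants $\beta(n)$ not too large, the high block wants $\beta(n)$ not too small so that $\Lambda_n(\beta)$ is negligible, and the delicate point is to determine the $\beta(n)$-dependence of the exponential moments of $A_n$ sharply enough --- uniformly in the slowly growing $s$, which is the polynomial-weight counterpart of the computation only sketched (``only minor modifications are required and we thus omit the computation'') in the proof of Lemma~\ref{lem:classF_exp_equiv_logYn_logOn} --- to exhibit a single window of $\beta(n)$ for which both tail estimates decay at the rate $n^{\gamma/(3(1+\gamma))}$.
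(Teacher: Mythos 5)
Your decomposition is genuinely different from the paper's, and it is the low block that breaks. The paper never splits $\sum_k \Lambda(k)(D_{nk}-D_{nk}^*)$ at a threshold in $k$. Instead it writes $\log O_n=\psi(n)-R(n)$ with $\psi$ the Chebyshev function and $R(n)=\sum_{k\le n}\Lambda(k)\one_{\{D_{nk}=0\}}$, picks a deterministic level $b=n^{\gamma/(1+\gamma)}\log(n)\alpha(n)$ sitting just above the typical size of $\log Y_n$, and then controls separately the large-deviation event $\{\log Y_n>\psi(b)+\cdots\}$ via the saddle-point CLT for $\log Y_n$, and the remainder $R(n)-\sum_{k>b}\Lambda(k)\le R(b)\le S(b):=\sum_{k\le b}\Lambda(k)\one_{\{C_k=0\}}$ via an exponential-moment bound for $S(b)$. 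So the two proofs share only the outer Markov/Chernoff shell.

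The step of yours that fails is the claim that $n^{-\gamma/(3(1+\gamma))}\log \PT{A_n>\tfrac{c}{2}n^{2\gamma/(3(1+\gamma))}\sqrt{\log n}}\to-\infty$ for $\beta(n)$ chosen small. Already for $\beta(n)=2$ one has $A_n\ge \log(2)\,D_{n2}$, and under $\theta_m=m^\gamma$ the variable $D_{n2}=\sum_{2\mid m}C_m$ has expectation $\sum_{2\mid m}\frac{\theta_m}{m}\frac{h_{n-m}}{h_n}\asymp\sum_{2\mid m}m^{\gamma-1}t_n^m\asymp n^{\gamma/(1+\gamma)}$ (a positive fraction of the $\asymp n^{\gamma/(1+\gamma)}$ cycles have even length) and concentrates around its mean. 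Hence $\PT{A_n>\tfrac{c}{2}n^{2\gamma/(3(1+\gamma))}\sqrt{\log n}}\to 1$ for \emph{every} $\beta(n)\ge 2$, and no exponential Markov inequality can turn an event of probability tending to one into a superexponentially small one; there is no admissible window for $\beta(n)$ at all. This is exactly where the analogy with Lemma~\ref{lem:classF_exp_equiv_logYn_logOn} breaks down: there the low block has expectation $O(\log^2\beta(n))=O(\log n)$, far below the threshold $\log^{4/3}(n)$, whereas here the low block is of the same order as $\log Y_n$ itself. Note also that your bound $D_{nk}-D_{nk}^*\le D_{nk}$ only loses an additive $1$ per $k$, so the same computation gives $\log Y_n-\log O_n\ge\log(2)(D_{n2}-1)\gtrsim n^{\gamma/(1+\gamma)}$ with high probability, which exceeds the threshold $c\,n^{2\gamma/(3(1+\gamma))}\sqrt{\log n}$. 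The obstruction is therefore not an artifact of your estimates but puts pressure on the statement itself (and on Lemma~\ref{lem:algrowth_closeness}); correspondingly, the paper's own proof hinges on the claim $\sum_{k\le b}e^{-k^{\gamma-1}t^k}=O(1)$, which is doubtful since $e^{-k^{\gamma-1}t^k}\to 1$ for $\gamma<1$. This is worth flagging rather than trying to repair within your framework.
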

\begin{proof}
The proof is very similar to the proof of Lemma~\ref{lem:algrowth_closeness}. Recall \eqref{eq:log_On_von_mangold} and notice that 
\begin{align*}
 \log O_n = \psi(n) - R(n)
\end{align*}
where
\begin{align*}
 \psi(n) = \sum_{k=1}^n \Lambda(k) \quad \text{and} \quad R(n) = \sum_{k=1}^n \Lambda(k)\one_{\{D_{nk}=0 \}}.
\end{align*}
Recall that $\psi$ is the so-called Chebyshev function as defined in \ref{eq:intro_Chebychev} which satisfies the asymptotic \ref{eq:Chebychev_asymp}.
First, we want to find the smallest $b$ such that 
\begin{align}\label{eq:algrowth_LDP_transfer_1}
n^{-\frac{\gamma}{3(1+\gamma)}} \log\PT{\log Y_n - \psi(b) > \frac{c}{2}n^{\frac{2\gamma}{3(1+\gamma)}} \sqrt{\log(n)}}
\rightarrow - \infty
\end{align}
and afterwards we show 
\begin{align}\label{eq:algrowth_LDP_transfer_2}
n^{-\frac{\gamma}{3(1+\gamma)}} \log\PT{R(n) - \sum_{k=b+1}^n \Lambda(k) > \frac{c}{2}n^{\frac{2\gamma}{3(1+\gamma)}} \sqrt{\log(n)}}
\rightarrow - \infty.
\end{align}
Theorem~\ref{thm:algrowth_mom-gen_logYn} implies a central limit theorem for $\log Y_n$ with mean $G(n) = O(n^\frac{\gamma}{1+\gamma}\log(n))$ and variance $F(n) = O(n^\frac{\gamma}{1+\gamma}\log^2(n))$, see Lemma 4.4 in \cite{StZe14a}. This tells us that that for 
$$x = \frac{\frac{c}{2} n^{\frac{2\gamma}{3(1+\gamma)}} \sqrt{\log(n)} + \psi(b) - G(n)}{\sqrt{ F(n)}}$$
 we get as $n \rightarrow \infty$
 \begin{align*}
 \PT{\log Y_n - \psi(b) \geq \frac{c}{2}n^{\frac{2\gamma}{3(1+\gamma)}} \sqrt{\log(n)} } 
 %
&=  \Big(1 - \frac{1}{2} \Big(1+\erf\Big(\frac{x}{\sqrt{2}} \Big) \Big)\Big)(1+o(1)).
\end{align*} 
Here, $\erf$ denotes the error function which satisfies the asymptotic
\begin{align*}
\erf(x) = 1 + O(x^{-1} e^{-x^2}) \quad \text { as } x \rightarrow \infty.
\end{align*}
Thus set $b = n^{\frac{\gamma}{1+\gamma}} \log(n)\alpha(n)$ for some function $\alpha \rightarrow \infty$ so that 
$$x = O\big(n^{\frac{\gamma}{2(1+\gamma)}}\alpha(n)\big)$$
where the error term has a positive sign. This implies   
 \begin{align*}
 n^{-\frac{\gamma}{3(1+\gamma)}} 
 \log\PT{\log Y_n - \psi(b)  > \frac{c}{2} n^{\frac{2\gamma}{3(1+\gamma)}} \sqrt{\log(n)}}
= O\Big(n^{-\frac{\gamma}{1+\gamma}} \log \big(x^{-1} e^{-x^2}\big)  \Big).
\end{align*} 
which converges indeed to $-\infty$ and hence \eqref{eq:algrowth_LDP_transfer_1} holds. So let us now prove \eqref{eq:algrowth_LDP_transfer_2}. Notice that
$$R(n) - \sum_{k=b+1}^n \Lambda(k) \leq R(b) \leq \sum_{k=1}^b \Lambda(k)\one_{\{C_k=0\}} =: S(b)$$
and therefore 
\begin{align*}
n^{-\frac{\gamma}{3(1+\gamma)}} \log \PT{S(b) > \frac{c}{2}n^{\frac{2\gamma}{3(1+\gamma)}}  \sqrt{\log(n)}}
\rightarrow - \infty
\end{align*}
implies \eqref{eq:algrowth_LDP_transfer_2}.
Via saddle point analysis we get 
\begin{align*}
\ET{e^{sS(b)}} = \exp\Big(\sum_{k=1}^b \log \big(1+(e^{s\Lambda(k)}-1)e^{-k^{\gamma-1}r^k} \big)\Big) \big(1+o(1) \big).
\end{align*}
We proceed as in the proof of Lemma 4.6 in \cite{StZe14a}. For any $s \geq 0$ Markov's inequality yields
\begin{align*}
&n^{-\frac{\gamma}{3(1+\gamma)}} \log \PT{S(b) > \frac{c}{2}n^{\frac{2\gamma}{3(1+\gamma)}} \sqrt{\log(n)}}\\
%
%
\leq & \, - \frac{sc}{2}n^{\frac{\gamma}{3(1+\gamma)}}\sqrt{\log(n)} + n^{-\frac{\gamma}{3(1+\gamma)}} \sum_{k=1}^b (e^{s\Lambda(k)}-1)e^{-k^{\gamma-1}r^k} \\
= & \, - \frac{sc}{2}n^{\frac{\gamma}{3(1+\gamma)}}\sqrt{\log(n)} +O\big(  n^{-\frac{\gamma}{3(1+\gamma)}}  (e^{s\log(n)}-1)\big).
\end{align*}
For the last equality notice that for $b = o\big(n^{\frac{1}{1+\gamma}}\big)$ (here we need the assumption $\gamma <1$), there is a constant $c>0$ such that
\begin{align*}
 \sum_{k=1}^b  \exp\big(-  t^k k^{\gamma-1}\big) 
&\leq \sum_{k=1}^b  \exp\big(-  k^{\gamma-1}\exp(- b n^{-\frac{1}{1+\gamma}})\big)\\
%
%
&\leq \int_1^b \exp\big(- c  x^{\gamma-1} ) dx\\
&= O\Big(\Gamma\Big(\frac{1}{1-\gamma},b^{\gamma-1}\Big) - \Gamma\Big(\frac{1}{1-\gamma},1\Big)\Big)\\
&= O(1).
\end{align*}
Now set $s = \log^{-1/2}(n)$ to get 
\begin{align*}
n^{-\frac{\gamma}{3(1+\gamma)}} \log \PT{S(b) > \frac{c}{2}n^{\frac{2\gamma}{3(1+\gamma)}}  \log(n)}
=  - \frac{c}{2} n^{\frac{2\gamma}{3(1+\gamma)}} + O\Big(  n^{-\frac{\gamma}{1+\gamma}} e^{\log^{1/2}(n)} \Big).
\end{align*}
The proof is complete.
\end{proof}

\subsection*{Acknowledgments}
The research leading to these results has been supported by the SFB $701$
(Bielefeld) and has received funding from the People Programme (Marie
Curie Actions) of the European Union's Seventh Framework Programme
(FP7/$2007-2013$) under REA grant agreement nr.$291734$.

\bibliographystyle{acm}
\bibliography{literatur}

\end{document}